\DeclareMathOperator{\im}{Im}
\DeclareMathOperator{\ch}{ch}
\DeclareMathOperator{\ind}{ind}
\DeclareMathOperator{\End}{End}
\DeclareMathOperator{\Hom}{Hom}
\DeclareMathOperator{\tr}{Tr}
\DeclareMathOperator{\coker}{coker}
\DeclareMathOperator{\Diff}{Diff}
\DeclareMathOperator{\SO}{SO}
\DeclareMathOperator{\Spin}{Spin}
\DeclareMathOperator{\SU}{SU}
\DeclareMathOperator{\Ham}{Ham}
\DeclareMathOperator{\HVect}{HVect}
\DeclareMathOperator{\Scal}{Scal}
\DeclareMathOperator{\Ad}{Ad}
\DeclareMathOperator{\Lie}{Lie}
\DeclareMathOperator{\Hor}{Hor}
\newcommand{\R}{\mathbb R}
\newcommand{\C}{\mathbb C}
\newcommand{\E}{\mathcal E}
\newcommand{\D}{\mathcal D}
\newcommand{\A}{\mathcal A}
\newcommand{\s}{\mathcal S}
\newcommand{\G}{\mathcal G}
\newcommand{\F}{\mathcal F}
\newcommand{\T}{\mathcal T}
\newcommand{\V}{\mathcal V}
\newcommand{\Id}{\text{Id}}
\newcommand{\diff}{\text{\rm d}}
\newcommand{\del}{\partial}
\newcommand{\delb}{\bar{\del}}
\newcommand{\dvol}{\mathrm{dvol}}
\newcommand{\so}{\mathfrak{so}}
\renewcommand{\P}{\mathbb P}
\renewcommand{\O}{\mathcal O}
\theoremstyle{plain}
	\newtheorem{theorem}{Theorem}
	\newtheorem{proposition}[theorem]{Proposition}
	\newtheorem{lemma}[theorem]{Lemma}
	\newtheorem{corollary}[theorem]{Corollary}
	\newtheorem{conjecture}[theorem]{Conjecture}
\theoremstyle{definition}
	\newtheorem{definition}[theorem]{Definition}
	\newtheorem{remark}[theorem]{Remark}
\theoremstyle{plain}
	\newtheorem*{theorem*}{Theorem}
	\newtheorem*{proposition*}{Proposition}
	\newtheorem*{lemma*}{Lemma}
	\newtheorem*{corollary*}{Corollary}
	\newtheorem*{conjecture*}{Conjecture}
\theoremstyle{definition}
	\newtheorem*{definition*}{Definition}
	\newtheorem*{remark*}{Remark}
	\newtheorem*{remarks*}{Remarks}
\def\blfootnote{\xdef\@thefnmark{}\@footnotetext}
\numberwithin{equation}{section}
\numberwithin{theorem}{section}
\begin{document}

\title{A gauge theoretic approach to the anti-self-dual Einstein equations}
\author{Joel Fine}
\date{ }

\maketitle
\vfill
\thispagestyle{empty}
\begin{abstract}
In \cite{Plebanski1977On-the-separati}, Plebanski reformulated the anti-self-dual Einstein equations with non-zero scalar curvature as a first order PDE for a \emph{connection} in an $\SO(3)$-bundle over the four-manifold. The aim of this article is to place this differential equation in a new framework, in which it is both elliptic and a stationary point of a parabolic flow. To do this, we exploit a link with definite connections (introduced in \cite{Fine2009Symplectic-Cala}) to draw an analogy with instantons and the Yang--Mills flow. This picture leads to a natural conjecture, analogous to one made by Donaldson concerning hyperkähler 4-manifolds \cite{Donaldson2006Two-forms-on-fo}. It also provides a moment-map description of the anti-self-dual Einstein equations with non-zero scalar curvature.
\end{abstract}

\vfill
\vfill
\newpage
~
\vfill

\tableofcontents

\vfill
\vfill
\section{Introduction}

In \cite{Plebanski1977On-the-separati}, Plebanski reformulated the anti-self-dual Einstein equations with non-zero scalar curvature as a first order PDE for a \emph{connection} in an $\SO(3)$-bundle over the four-manifold $X$. (This particular interpretation of Plebanski's work appears explicitly in \cite{Capovilla1990Gravitational-i}.) From the physical perspective, there has been renewed interest in this description thanks in part to the recent works of Krasnov \cite{Krasnov2011Gravity-as-a-di,Krasnov2011Pure-Connection}. 

The principal goal of this article is to place this differential equation in a new framework, in which it is both elliptic and a stationary point of a parabolic flow. To do this, we exploit a link with definite connections (which were introduced in \cite{Fine2009Symplectic-Cala}) to draw an analogy with instantons and the Yang--Mills flow. This picture leads to a natural conjecture, analogous to one made by Donaldson concerning hyperkähler 4-manifolds; see Conjectures \ref{skd_conjecture} and \ref{positive conjecture} below. It also provides a moment-map description of the anti-self-dual Einstein equations. 

We establish various foundational results concerning this description of the anti-self-dual Einstein equations. For example we show that the gauge-fixed linearisation of the equation is a Dirac-type operator and compute its index. We also prove local stability of the parabolic flow: if it is started sufficiently close to a solution of the elliptic equation, then the flow exists for all time and converges to the solution, modulo gauge. 

A secondary goal of the article is to popularise this gauge theoretic approach to the anti-self-dual Einstein equations which, despite its elegance, seems not to be so well known in the mathematical community.

\subsection{Acknowledgements} 

I would like to thank Dmitri Panov for numerous important conversations about this article and related topics. I am grateful to Michael Singer for helpful discussions and in particular for explaining Proposition \ref{LC} to me. Conversations with Claude LeBrun have been equally helpful for the twistor theory in  \S\ref{twistor fact}. I would like to thank Frederik Witt and Hartmut Wei{\ss} for interesting discussions about parabolic flows. Finally, I would also like to thank Kirill Krasnov for bringing the article of Plebanski to my attention after a first draft of this article was written, for kindly explaining his own research on this topic to me and for encouraging me in my own work. 

This article was partly completed whilst I was a guest at the Simons Center for Geometry and Physics at Stony Brook University, New York. I would like to thank the staff and other visitors there at that time for providing an ideal environment for carrying out and discussing research.

\subsection{Definite triples and hyperkähler metrics}

The approach in this article originated in part from a conjecture of Donaldson  \cite{Donaldson2006Two-forms-on-fo}. Before stating Donaldson's conjecture, we recall a definition from 4-dimensional linear algebra which will play a central role in all that follows. The wedge product defines a symmetric bilinear form on $\Lambda^2 \R^4$ with values in $\Lambda^4 \R^4$. The form has type $(3,3)$ (although a choice of orientation is required to distinguish the positive directions).  

\begin{definition}
A 3-dimensional subspace $V \subset \Lambda^2 \R^4$ is called \emph{definite} if the wedge product restricts to a definite form on $V$. 
\end{definition}

\begin{conjecture}[Donaldson \cite{Donaldson2006Two-forms-on-fo}]\label{skd_conjecture}
Let $X$ be a compact 4-manifold which admits a triple of symplectic forms $\omega_1, \omega_2, \omega_3$. Suppose that this triple spans a definite 3-plane in $\Lambda^2$ at each point of $X$. Then $X$ admits a hyperkähler metric. (And hence $X$ is diffeomorphic to $T^4$ or a K3 surface.)
\end{conjecture}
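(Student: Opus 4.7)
The plan is to extract a metric from the definite triple by deforming the $\omega_i$ within their cohomology classes toward a pointwise hypersymplectic normalisation. At each point, the definite 3-plane $V \subset \Lambda^2 T^*X$, with orientation chosen so that the wedge product restricts positively, is the self-dual subspace of a unique conformal class on $T^*X$; this gives a conformal structure $[g]$ on $X$. The target is to produce a cohomologous triple $(\omega_1',\omega_2',\omega_3')$ and a metric $g$ with
\begin{equation*}
\omega_i' \wedge \omega_j' = 2 \delta_{ij} \dvol_g
\end{equation*}
pointwise. Once achieved, a standard linear algebra argument shows the $\omega_i'$ determine three orthogonal integrable complex structures satisfying the quaternion relations, so $g$ is hyperk\"ahler; Hitchin's classification of compact hyperk\"ahler 4-manifolds then forces $X$ to be diffeomorphic to $T^4$ or a K3 surface.

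Before attempting any analysis, one checks the cohomological obstruction. The pointwise positivity of the wedge product on $V$ implies that the $3\times 3$ matrix of intersection numbers $Q_{ij} := \int_X \omega_i \wedge \omega_j$ is positive definite, and a constant $\GL(3,\R)$ change of basis of the triple normalises $Q$ to a scalar multiple $2c\,\delta_{ij}$ of the identity. This is the integral version of the pointwise target, with $c$ corresponding to the volume of the sought metric, so the cohomological data is consistent with the desired pointwise normalisation; in particular no topological obstruction arises beyond what is implicitly assumed by the existence of the definite triple.

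The analytic heart of the proof would be a parabolic flow in the spirit of Donaldson, deforming the $\omega_i$ within their cohomology classes by exact 2-forms so as to drive the pointwise defect $\omega_i \wedge \omega_j - 2\delta_{ij}\dvol_g$ to zero, where the conformal class $[g]$ is itself updated from the evolving triple. The main obstacle --- and the reason this conjecture remains open --- is to establish long-time existence and convergence of such a flow while preserving definiteness of the triple at every point, so as to rule out bubbling or degeneration of the evolving conformal structure. The gauge-theoretic picture developed in the present paper, reformulating the problem in terms of definite connections on an $\SO(3)$-bundle for which hyperk\"ahler triples arise as instanton-like critical points, offers an alternative attack via a Yang--Mills-type flow. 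Importing the estimates and bubbling analysis of Yang--Mills theory into this setting is where I expect the central difficulty to lie.
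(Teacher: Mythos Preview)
The statement you are addressing is a \emph{conjecture}, and the paper does not prove it; it is stated as Conjecture~\ref{skd_conjecture} and remains open. So there is no proof in the paper to compare against. What the paper does is precisely what you sketch: it sets up the flow on definite triples (\S\ref{flow for definite triples}), proves short-time existence and uniqueness (Theorem~\ref{short time existence for triples}), and shows that the only possible critical points are hyperk\"ahler triples (Proposition~\ref{unique critical point}). Long-time existence and convergence are left open, exactly as you acknowledge.

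Your write-up is therefore not a proof but an honest strategy outline, and you say as much yourself (``the reason this conjecture remains open''). That is appropriate for a conjecture, but you should not label it a proof. One minor correction: the gauge-theoretic picture via definite connections and the Yang--Mills-type flow in this paper is \emph{not} aimed at hyperk\"ahler metrics; it targets anti-self-dual Einstein metrics with \emph{non-zero} scalar curvature (Conjecture~\ref{positive conjecture}). The analogue for hyperk\"ahler metrics is the flow on definite triples themselves, not on connections, so your final paragraph slightly conflates the two parallel stories.
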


Of course, the triple of Kähler forms associated to a hyperkähler metric is an example of such a definite triple. In \cite{Donaldson2006Two-forms-on-fo}, Donaldson suggests that one approach to proving Conjecture \ref{skd_conjecture} is to take the given definite triple $\omega_i$ and attempt to deform it (say, via a continuity method) until it becomes a hyperkähler triple. In \S\ref{flow for definite triples} we describe a geometric flow which attempts to carry out this deformation. We prove short time existence for the flow and that the only possible fixed point is a hyperkähler triple. 

\subsection{Definite connections and anti-self-dual Einstein metrics}

In four dimensions, hyperkähler metrics are the anti-self-dual metrics with zero scalar curvature. The central theme of this article is an analogous version of the set-up considered by Donaldson which applies to anti-self-dual Einstein metrics of \emph{non-zero} scalar curvature. The role of definite triples is played by definite connections. These are connections whose curvature satisfies the following inequality. (For more on definite connections, see~\cite{Fine2009Symplectic-Cala}.)

\begin{definition}\label{definition definite connection}
Let $E \to X$ be an $\SO(3)$-bundle over a 4-manifold. A metric connection $A$ in $E$ is called \emph{definite} if its curvature is non-zero on every tangent 2-plane. I.e., $F_A(u,v) \neq 0$ whenever $u,v$ are linearly independent tangent vectors.
\end{definition}

To spell out the analogy with definite triples of symplectic forms, let $e_1, e_2, e_3$ be a local orthonormal frame for the bundle $\so(E)$ of Lie algebras. (We use the Killing form as the metric on $\so(E)$.) Locally, the curvature of $A$ is given by $F_A=\sum F_i \otimes e_i$, for a triple of 2-forms $F_i$. $A$ is definite if and only if the 2-forms $F_i$ span a definite 3-plane in $\Lambda^2$ at each point. Meanwhile, the closed condition $\diff \omega_i = 0$ has been replaced by the Bianchi identity $\diff_AF_A=0$. 

Next we explain how a definite connection in $E \to X$ determines a Riemannian metric on $X$. 

\begin{definition}\label{definition metric from connection}
Given a definite connection $A$ we define a Riemannian metric $g_A$ on $X$ as follows. We declare the span $\langle F_i\rangle$ to be the bundle of self-dual 2-forms; this defines a conformal class on $X$. (This is the unique conformal class for which $A$ is a self-dual instanton.) We take as volume form $\mu = \frac{1}{3}\sum F_i^2$. It is straightforward to check this definition doesn't depend on the choice of local orthonormal frame of $\so(E)$.

Alternatively, the metric can be described invariantly. Interpreting the curvature $F_A \in \Lambda^2 \otimes \so(E)$ as a homomorphism $\so(E)^* \to \Lambda^2$, $A$ is definite precisely when the image of this map is a definite 3-plane, which we then take to be the bundle $\Lambda^+_A$ (the subscript reminding us of the $A$-dependence). Meanwhile, the volume form is simply the multiple $\frac{8\pi^2}{3}p_1(A)$ of the first Pontrjagin form of  $A$.
\end{definition}

We now explain the analogue of the hyperkähler condition $\omega_i \wedge \omega_j = \delta_{ij} \mu$. This was first considered by Plebanski \cite{Plebanski1977On-the-separati} and subsequently by Capovilla--Jacobson--Dell \cite{Capovilla1991Self-dual-2-for,Capovilla1990Gravitational-i} (although not in the context of definite connections).

\begin{definition}
\label{definition perfect connection}
Given a definite connection $A$, its curvature, considered as a map $F_A \colon \so(E)^* \to \Lambda^2$ identifies $\so(E)^* \cong \Lambda^+_A$. We call $A$ \emph{perfect} if this map is an isometry. Equivalently, if $e_i$ is a local orthonormal framing for $\so(E)$, a definite connection $A$ is perfect if its curvature $F_A= \sum F_i \otimes e_i$ satisfies $F_i \wedge F_j = \delta_{ij} \mu$. 
\end{definition}
 
The following result seems to have been overlooked to some extent by the mathematical community. This may in part be down to the fact that the full proof is spread over three articles (\cite{Plebanski1977On-the-separati,Capovilla1991Self-dual-2-for,Capovilla1990Gravitational-i}), and is given in  terminology mathematicians may be unfamiliar with. Because of this, and in an attempt to popularise this work, we give a self-contained proof in \S\ref{perfect connections}. (This also serves to fix notation and ideas for the later parts of the paper.)

\begin{theorem*}[Plebanski, Capovilla--Jacobson--Dell]
Let $A$ be a perfect connection over a 4-manifold. Then the corresponding metric $g_A$ is anti-self-dual and Einstein with non-zero scalar curvature.
\end{theorem*}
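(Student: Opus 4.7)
The plan is to prove the theorem by showing that, under the isometry $F_A\colon\so(E)\to\Lambda^+_A$ supplied by the perfect hypothesis, the connection $A$ is identified with the Levi-Civita connection on $\Lambda^+_A$ induced by $g_A$. Once this identification is made, the curvature of $\nabla^{\mathrm{LC}}$ on $\Lambda^+_A$ is just $F_A$ transported across the isomorphism, and reading off its tensor type inside $\Omega^2\otimes\Lambda^+_A$ forces the anti-self-dual Einstein property with $s\neq 0$.

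\textbf{The main step.} Work in a local orthonormal frame $(e_i)$ of $\so(E)$ with $A=\sum A_j\otimes e_j$ and $F_A=\sum F_i\otimes e_i$. By Definition~\ref{definition metric from connection} and the perfect hypothesis, $(F_i)$ is an orthonormal frame of $\Lambda^+_A$ with respect to $g_A$, and the Bianchi identity $d_AF_A=0$ reads
\[
dF_i+\epsilon_{ijk}A_j\wedge F_k=0.
\]
Transferring $A$ to $\Lambda^+_A$ via $e_j\mapsto F_j$ produces a connection $\widetilde{A}$ on $\Lambda^+_A$ whose connection $1$-forms in the frame $(F_i)$ are again the $(A_j)$. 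The Levi-Civita connection, being torsion-free on $TX$, satisfies $dF_i=\mathrm{Alt}(\nabla^{\mathrm{LC}}F_i)=-\epsilon_{ijk}\beta_j\wedge F_k$ for its $\SO(3)$-connection forms $(\beta_j)$ on $\Lambda^+_A$. Subtracting, the difference $\gamma_j:=A_j-\beta_j$ obeys $\epsilon_{ijk}\gamma_j\wedge F_k=0$ for each $i$. I would then invoke the pointwise linear-algebra lemma that the map
\[
(T_x^*X)^{\oplus 3}\longrightarrow(\Lambda^3T_x^*X)^{\oplus 3},\qquad (\gamma_j)\mapsto\bigl(\epsilon_{ijk}\gamma_j\wedge F_k\bigr)_i,
\]
is injective; both sides are $12$-dimensional, and the kernel is readily seen to be trivial in a $g_A$-orthonormal coframe where the $F_i$ take their standard self-dual form. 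This forces $\gamma_j\equiv 0$, so $\widetilde{A}=\nabla^{\mathrm{LC}}|_{\Lambda^+_A}$.

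\textbf{Conclusion.} With the identification, $R^{\mathrm{LC}}|_{\Lambda^+_A}\in\Omega^2(\so(\Lambda^+_A))\cong\Omega^2(\Lambda^+_A)$ coincides with $F_A$ under the isomorphism. Since $F_A\in\Lambda^+_A\otimes\so(E)$ by the very definition of $\Lambda^+_A$, the curvature lies entirely in the $\Lambda^+_A\otimes\Lambda^+_A$ summand of $\Lambda^2\otimes\Lambda^+_A$. In the standard decomposition of the Riemann curvature operator, the $\Lambda^+\otimes\Lambda^+$ piece is $W^++\tfrac{s}{12}\Id$ while the $\Lambda^-\otimes\Lambda^+$ piece is the trace-free Ricci tensor; vanishing of the latter gives Einstein. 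Moreover, through the identification, $F_A=\sum F_i\otimes F_i$ corresponds to a non-zero scalar multiple of the identity endomorphism of $\Lambda^+_A$, so $W^++\tfrac{s}{12}\Id=\lambda\,\Id$ with $\lambda\neq 0$. Tracelessness of $W^+$ then forces $W^+=0$ and $s\neq 0$, yielding \emph{anti-self-dual Einstein with non-zero scalar curvature}. The main obstacle is the linear-algebra injectivity in the middle step; everything else is a matter of bookkeeping tensor types once $\widetilde{A}=\nabla^{\mathrm{LC}}$ is established.
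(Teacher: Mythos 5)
Your proof is correct, and the overall two‑step architecture matches the paper's: first identify $A$ with the Levi-Civita connection on $\Lambda^+_A$ via the isomorphism $F_A$, then read off the anti-self-dual Einstein condition from the tensor type of the curvature. Your execution differs in interesting ways in both halves.

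For the first half, you compare connection $1$-forms directly and reduce to an injectivity claim for the pointwise map $(\gamma_j)\mapsto(\epsilon_{ijk}\gamma_j\wedge F_k)_i$. This is essentially an unpacking of the paper's Proposition~\ref{LC} (uniqueness of the torsion-free metric connection on $\Lambda^+$) plus Lemma~\ref{torsion} (torsion-free $\Leftrightarrow\diff_\nabla\phi=0$). The injectivity you assert is true, but it is not an ``inspection'' statement: the clean verification uses the quaternionic identities satisfied by the three almost complex structures $J_i(\cdot)=\ast(\cdot\wedge F_i)$, which is exactly the computation in the paper's proof of Proposition~\ref{LC}. So this step is the same idea rearranged as a linear-algebra lemma.

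For the second half you take a genuinely different, and I think more direct, route. The paper's Lemma~\ref{metric perfect implies asdE} establishes $g=cg_A$, rescales, and then must rule out the eigenvalue pattern $(+1,+1,-1)$ for $G$ by a Bianchi-identity computation that would otherwise produce a parallel section of $\Lambda^+_A$. You instead transport the curvature through $F_A$ and observe that it becomes a non-zero scalar multiple of the identity on $\Lambda^+_A$, which gives $W^+=0$ and $s\neq 0$ in one line, with no case analysis. This works because the perfect hypothesis makes $\sum F_i\otimes F_i$ literally the metric tensor (identity endomorphism) on $\Lambda^+_A$, so once $A$ is the Levi-Civita connection, the $\Lambda^+\to\Lambda^+$ block of the Riemann operator is forced to be a multiple of $\Id$. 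The paper's more involved argument is there because its Lemma~\ref{metric perfect implies asdE} is stated intrinsically for an arbitrary Einstein metric with perfect Levi-Civita connection on $\Lambda^+$, without making the push-forward identification explicit.

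One small imprecision worth fixing: the curvature transports under the bundle isomorphism $\phi=F_A$ by $\Ad(\phi)$, and under the cross-product identification $\so(V)\cong V$ of a rank-$3$ oriented Euclidean bundle, $\Ad(\phi)$ corresponds to $\det(\phi)\cdot\phi$, not $\phi$. So the transported curvature is $\det(F_A)\sum F_i\otimes F_i=\pm\Id_{\Lambda^+_A}$, with the sign being the sign of the definite connection (and hence of the scalar curvature). Your displayed formula ``$F_A=\sum F_i\otimes F_i$'' omits the $\det(F_A)$ factor, but your actual claim --- a \emph{non-zero scalar multiple} of the identity --- already accounts for it, so the conclusion is unaffected.
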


We make two remarks about this Theorem. Firstly, the condition that a connection $A$ be perfect is a first order PDE for $A$. On the other hand, given an arbitrary Riemannian metric $g$, the requirement that $g$ be anti-self-dual and Einstein is second order in $g$. Given a definite connection, the Riemannian metric $g_A$ is also first order in $A$, so we have replaced an a~priori third order equation with a first order one. 

Secondly, as equations for a metric, both the condition that $g$ be Einstein and the condition that $g$ be anti-self-dual are elliptic (modulo diffeomorphisms). Taken together then, the requirement that $g$ be anti-self-dual and Einstein is over-determined. Meanwhile, as we prove in \S\ref{elliptic}, the requirement that a definite connection be perfect is, modulo gauge, an elliptic equation. 

It is interesting to note that when $A$ is perfect, the relevant linear elliptic operator is a Dirac operator associated to the metric $g_A$. We use this observation to compute the index of the operator, via the Atiyah--Singer index theorem; see \S\S\ref{dirac operator} and~\ref{index}. 

\subsection{A conjecture for positive definite connections}

We can now ask the analogue of Donaldson's conjecture: does the existence of a definite connection imply the existence of an anti-self-dual Einstein metric? In fact, this is two seemingly quite different questions, depending crucially on a sign. 

In contrast to arbitrary connections, it is possible to give a sign to the curvature of a definite connection. To define this sign we first recall that $\so(3)$ is naturally oriented. If $e_1, e_2$ are linearly independent, then we declare $e_1, e_2, [e_1,e_2]$ to be an oriented basis. Equivalently, and more invariantly, given a metric and orientation on $\R^3$, the cross-product defines an identification $\R^3 \cong \so(3)$ and hence induces an orientation on $\so(3)$. Reversing the orientation on $\R^3$ changes this identification by a sign and so leaves unchanged the induced orientation on $\so(3)$.

A consequence of this is that on an oriented Riemannian 4-manifold, the bundles $\Lambda^{\pm}$ are naturally oriented. This is because under the metric identification $\Lambda^2 \cong \so(4)$, the splitting $\Lambda^2 = \Lambda^+ \oplus \Lambda^-$ corresponds to the Lie algebra isomorphism $\so(4) \cong \so(3) \oplus \so(3)$. With this observation in hand we can now define the sign of a definite connection.

\begin{definition}
Given a definite connection $A$, its curvature gives an isomorphism $\so(E)^* \to \Lambda^+_A$ between two oriented bundles. We say that the connection is positively or negatively curved according to whether this isomorphism is orientation preserving or reversing respectively. 

Equivalently, if $e_1, e_2, e_3$ is a local oriented frame for $\so(E)$ and $F_A = \sum F_i \otimes e_i$ then $A$ is called positively curved if $F_1, F_2, F_3$ is an oriented frame for $\Lambda^+$ and negatively curved otherwise.
\end{definition}

When a definite connection is perfect, and hence determines an anti-self-dual Einstein metric, this sign agrees with that of the metric's scalar curvature. 

The negatively curved version of Conjecture~\ref{skd_conjecture} is \emph{false}: there are examples of compact 4-manifolds which admit negative definite connections but which do not admit an anti-self-dual Einstein metric. The examples come from a collection of Riemannian 4-manifolds constructed by Gromov and Thurston \cite{Gromov1987Pinching-consta}. Put briefly, they begin with a hyperbolic 4-manifold $M$ containing a null-homologous totally geodesic surface $\Sigma$. Next they consider  the $k$-fold cover $M_k \to M$ ramified along $\Sigma$. On the one hand, pulling back the hyperbolic metric from $M$ and smoothing along the branch locus, they produce a metric on $M_k$ of negative sectional curvature. On the other hand, finiteness considerations show that at most finitely many of the $M_k$ can admit hyperbolic metrics. (Indeed, Gromov and Thurston speculate that none of the $M_k$ are hyperbolic.)

Now, it is shown in \S3.4 of \cite{Fine2009Symplectic-Cala} that for these metrics constructed by Gromov and Thurston, the Levi-Civita connection on either $\Lambda^+$ or $\Lambda^-$ is negative definite. Meanwhile, if $M_k$ admits an anti-self-dual Einstein metric it must be hyperbolic. To see this note that, since $\Sigma$ is null-homologous, the signature of $M_k$ is zero (see, e.g., equation (15) in the article \cite{Hirzebruch1969The-signature-o} of Hirzebruch). For a general 4-manifold, Chern--Weil theory equates the signature to the difference of the $L^2$-norms of the self-dual and anti-self-dual Weyl curvatures. From this it follows that any anti-self-dual metric on $M_k$ is necessarily conformally flat. In particular, if a metric is anti-self-dual and Einstein it must be hyperbolic. 

Whilst the negatively curved version of Conjecture~\ref{skd_conjecture} is false, there are (admittedly speculative) reasons to hope that the positively curved version of the Conjecture may be true. We optimistically state it here:

\begin{conjecture}\label{positive conjecture}
Let $X$ be a compact 4-manifold which admits a positive definite connection. Then $X$ also admits an anti-self-dual Einstein metric with positive scalar curvature. In particular (by a theorem of Hitchin \cite{Hitchin1981Kahlerian-twist}) $X$ is diffeomorphic to $S^4$ or $\C\P^2$. 
\end{conjecture}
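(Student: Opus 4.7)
The only plausible approach appears to be through the parabolic flow whose stationary points are the perfect connections. The plan is to start with the given positive definite connection $A_0$ and deform it by this flow; if the flow exists for all time and subconverges modulo gauge, the limit is a perfect connection $A_\infty$, the associated metric $g_{A_\infty}$ is anti-self-dual Einstein with positive scalar curvature, and Hitchin's theorem \cite{Hitchin1981Kahlerian-twist} identifies $X$ with $S^4$ or $\C\P^2$. The moment-map description promised in the introduction suggests an infinite-dimensional GIT analogy in which the positive definite condition should play the role of a stability condition, so that the gradient flow of the appropriate functional on $\A/\G$ terminates at a perfect connection.

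The main steps I would carry out are the following. First, I would check that positive definiteness is both open in the space of connections and preserved along the flow, at least for short time, so that $g_{A(t)}$ is well-defined throughout the evolution; this should reduce to a parabolic maximum principle applied to the curvature of $A(t)$. Second, I would attempt long-time existence. This is the central PDE issue: the flow is degenerate parabolic on $\A/\G$ and the evident obstructions are curvature concentration (``bubbling'') and collapse of $g_{A(t)}$. Positivity should give a lower bound on $\vol(g_{A(t)})$ via the identification with the Pontrjagin density in Definition \ref{definition metric from connection}, and hence a topological lower bound coming from $\int_X p_1(A)$. Third, assuming long-time existence and uniform $C^k$-bounds coming from the ellipticity results established in \S\ref{elliptic}, one extracts a subsequential limit and identifies it as perfect via the monotonicity of a suitable energy functional.

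A complementary strategy, which may be useful as a stepping stone, is to derive topological constraints on $X$ directly from the existence of a positive definite connection, independently of the flow. Chern--Weil applied both to $E$ and to $\Lambda^+_A$ (which is identified with $\so(E)^*$ via the curvature map) expresses $p_1(\Lambda^+_A)$ and $p_1(E)$ in terms of the same curvature tensor; combined with the Euler class of $\Lambda^+$ and the standard signature formula, this should produce Hitchin--Thorpe style inequalities that could already force $\chi(X)$ and $\sigma(X)$ to lie in a very restricted range. Ideally one would reduce to the case $X \cong S^4$ or $\C\P^2$ and then invoke the known existence of round and Fubini--Study metrics as the target of the flow.

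The principal obstacle is the absence of any obvious mechanism to rule out finite-time singularities or collapse. The negative version of the conjecture fails precisely because of the Gromov--Thurston examples, so the analogous flow there \emph{must} develop singularities; in the positive case the corresponding no-bubble, no-collapse statement has no immediate counterpart and would likely require genuinely new analytic input — either an energy gap for positive definite connections analogous to the instanton energy gap, or a Perelman-type monotonicity formula adapted to this setting. Identifying the limit and ruling out Yang--Mills type bubbles that are not themselves perfect is equally serious. It is precisely for these reasons that the statement is offered as a speculative conjecture rather than a theorem.
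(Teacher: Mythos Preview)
The statement is a \emph{conjecture}, and the paper does not prove it; there is no proof to compare against. Your proposal is likewise not a proof but an outline of a possible program, and you explicitly acknowledge as much in your final paragraph. So the honest assessment is that neither you nor the paper has a proof here.

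That said, your outline is essentially the program the paper itself advocates. The flow you invoke is precisely the downward gradient flow of the energy $\E$ introduced in \S1.6 and studied in \S4; the paper establishes short-time existence, uniqueness, and local stability near a perfect connection (Theorems~\ref{short time existence} and~\ref{local_stability_thm}), and then explicitly states that ``the truly difficult work remains to be done (singularity formation, understanding obstructions to long time existence etc.).'' The moment-map picture you mention is developed in \S\ref{moment map interpretation}. Your observation that the negative case fails via Gromov--Thurston, and hence that the flow \emph{must} form singularities there, is exactly the paper's own cautionary remark preceding the statement of the conjecture.

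Two points where your sketch is slightly optimistic relative to the paper. First, you hope that positivity is preserved along the flow by a maximum principle; the paper proves no such thing and this is not obviously true. Second, you suggest extracting a limit ``assuming long-time existence and uniform $C^k$-bounds''; even granting those, the paper does \emph{not} know that the only critical points of $\E$ on $\D$ are perfect connections. The analogous statement for definite triples is Proposition~\ref{unique critical point}, whose proof relies on Seiberg--Witten theory via Bauer's theorem, and the paper explicitly flags that no analogue is known for definite connections. So even a convergent flow might a~priori converge to a non-minimising critical point.

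In short: your proposal accurately reflects the state of affairs, but it is a research program, not a proof, and the paper presents it the same way.
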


The line of reasoning which led to this began in \cite{Fine2009Symplectic-Cala}. The conjecture, if true, would amount to a type of ``sphere theorem''. Sphere theorems in Riemannian geometry say that only the sphere supports a Riemannian metric whose curvature satisfies certain inequalities. From this point of view, the conjecture above could be described as a ``gauge theoretic sphere theorem''.

\subsection{Symplectic geometry of definite connections}\label{symplectic}

We briefly discuss some speculative motivation for Conjecture \ref{positive conjecture} coming from symplectic geometry. We begin by recalling the link between definite connections and symplectic forms which is explained in detail in \cite{Fine2009Symplectic-Cala}. 

Let $E \to X$ be an $\SO(3)$-bundle over a 4-manifold and let $A$ be a metric connection. We denote by $Z \to X$ the associated $S^2$-bundle. The vertical tangent bundle to $Z$ is an $\SO(2)$-bundle $V \to Z$. If we pick an orientation for the fibres of $E$, we can think of $V$ as a Hermitian line bundle. It carries a natural unitary connection defined as follows. A section of $V$ is a vector field on $Z$ which is tangent to the fibres of $Z \to X$. Along the fibres, we can differentiate the section using the Levi-Civita connection on $S^2$. Meanwhile we use horizontal transport with respect to $A$ to identify nearby fibres of $Z \to X$; in this way we can differentiate sections of $V$ in horizontal directions. (The  connection in $V$ is described in more detail in Definition \ref{induced unitary connection}.) The result is a unitary connection $A_V$ in $V$ which depends on the initial choice of connection~$A$.  

The curvature $F(A_V)$ of $A_V$ determines a closed real 2-form on $Z$ via $\omega_A = \frac{ i}{2\pi} F(A_V)$. As the notation suggests $\omega_A$ depends on the initial $\SO(3)$-connection $A$. The crucial point for us is the following result (proved in \cite{Fine2009Symplectic-Cala}).

\begin{proposition}
The closed 2-form $\omega_A$ is symplectic if and only if $A$ is definite. When $A$ is positive definite, $(Z, \omega_A)$ is ``Fano'' in the sense that $c_1(Z) = 2[\omega_A]$. When $A$ is negative definite, $(Z, \omega_A)$ is ``Calabi--Yau'' in the sense that $c_1(Z) = 0$.
\end{proposition}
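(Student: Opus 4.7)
The plan is to reduce both claims to a pointwise computation of the curvature $F(A_V)$. Fix a point $(x,z)\in Z$ with $z\in S^2\subset\so(E)_x$ a unit vector. The horizontal distribution of $A_V$ on $Z$ agrees with the horizontal lift via $A$, so $T_{(x,z)}Z=H\oplus V$ where $H$ is the $A$-horizontal lift of $T_xX$ and $V=T_zS^2$. Unwinding the construction of $A_V$ (Levi-Civita along fibres, $A$-horizontal transport across fibres) yields, after normalisation,
\[
\omega_A\big|_{(x,z)} \;=\; \tfrac{1}{2\pi}\langle z,F_A\rangle_x \;+\; \omega_V,
\]
where $\omega_V$ restricts on each fibre to a fixed non-zero multiple of the round area form on $S^2$, and the cross horizontal--vertical term vanishes. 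The vanishing of this cross term is because $V_z$ is the Killing-orthogonal complement of $z$ in $\so(E)_x$, a decomposition preserved by the $\SO(3)$-connection $A$.

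Since $\omega_A$ decomposes as pure horizontal plus pure vertical, $\omega_A^3$ is a non-zero multiple of $\langle z,F_A\rangle^2\wedge\omega_V$, and the vertical factor never vanishes. Hence $\omega_A$ is symplectic iff $\langle z,F_A\rangle$ is a non-degenerate 2-form on $T_xX$ for every $(x,z)\in Z$. Writing $F_A=\sum F_i\otimes e_i$ in a local orthonormal frame for $\so(E)$ and $z=\sum z_ie_i$ with $|z|=1$, we have $\langle z,F_A\rangle\wedge\langle z,F_A\rangle=\sum_{i,j}z_iz_j\,F_i\wedge F_j$; this is non-zero for every unit $z$ exactly when the $\Lambda^4$-valued symmetric matrix $(F_i\wedge F_j)$ is definite at each point of $X$, i.e.\ when the 3-plane $\langle F_1,F_2,F_3\rangle\subset\Lambda^2$ is definite. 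That is the definition of $A$ being a definite connection. Closedness of $\omega_A$ is the Bianchi identity for $A_V$, and by construction $[\omega_A]=c_1(V)$.

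To compare $c_1(Z)$ with $[\omega_A]$, equip $Z$ with the almost complex structure $J=J_V\oplus J_H(z)$, where $J_V$ is the standard complex structure on $S^2$ and $J_H(z)$ is the unique $g_A$-orthogonal complex structure on $H\cong\pi^*TX$ for which $\langle z,F_A\rangle$ is a positive $(1,1)$-form (this exists because $\langle z,F_A\rangle$ is by construction self-dual with respect to $g_A$). Then $c_1(TZ,J)=c_1(V)+c_1(H,J_H)=[\omega_A]+c_1(H,J_H)$. Via the isomorphism $F_A\colon\so(E)\to\Lambda^+_A$, the bundle $H$ with its $z$-dependent complex structure is precisely the horizontal part of the Atiyah--Hitchin--Singer twistor space of $(X,g_A)$. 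When $A$ is positive definite the identification is orientation-preserving and a standard calculation (restricting to a twistor line $\C\P^1\subset Z$, where the normal bundle is $\O(1)\oplus\O(1)$) gives $c_1(H,J_H)=[\omega_A]$, hence $c_1(Z)=2[\omega_A]$. When $A$ is negative definite the identification reverses orientation, $J_H$ is replaced by its conjugate, and one obtains $c_1(H,J_H)=-[\omega_A]$, hence $c_1(Z)=0$.

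The main obstacle is this last step: computing $c_1(H,J_H)$ and keeping track of the orientation sign that translates positive/negative definiteness of $A$ into the Fano/Calabi--Yau dichotomy. The cleanest route I see is to work inside $\pi^*(\so(E)\otimes\C)$ and express $H^{1,0}$ in terms of the tautological line $L=\C\cdot z$ and its orthogonal complement, from which $c_1(H,J_H)$ can be read off as a tensor combination involving $V$. The sign of the definiteness of $A$ enters precisely through the orientation of the isomorphism $\so(E)\cong\Lambda^+_A$, and tracing this through yields the two contrasting identities.
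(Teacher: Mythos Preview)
The paper does not prove this proposition: it is stated in \S\ref{symplectic} with the parenthetical ``(proved in \cite{Fine2009Symplectic-Cala})'' and no argument is supplied. The decomposition you use, $\omega_A|_{(x,z)} = \tfrac{1}{2\pi}\langle z,F_A\rangle + \omega_V$, does reappear later as Proposition~\ref{decomposition of omega}, but that too is attributed to the same external reference rather than proved here. So there is no proof in this paper to compare your attempt against.

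On the merits of your argument: the first assertion (symplectic $\Leftrightarrow$ definite) is handled correctly and matches the formula the paper quotes; once the mixed term vanishes, nondegeneracy of $\omega_A$ reduces to definiteness of the $\Lambda^4$-valued quadratic form $z\mapsto\langle z,F_A\rangle^2$ on the unit sphere in $\so(E)_x$, which is exactly the definition of a definite connection. For the Chern-class dichotomy your plan---identify $(Z,J)$ with the twistor space of $g_A$ via $F_A\colon\so(E)\to\Lambda^+_A$ and then read off $c_1(H,J_H)$---is the natural one, but as you yourself flag it is only sketched. Note in particular that restricting to a fibre $\C\P^1$ determines $c_1(H,J_H)$ only modulo classes pulled back from $X$; to conclude $c_1(H,J_H)=\pm c_1(V)$ on the nose you still need to rule out a pullback contribution (one clean way is via the fibrewise antipodal involution on $Z$, under which both $c_1(V)$ and $c_1(H,J_H)$ change sign while pullback classes are fixed). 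Your proposed route through $\pi^*(\so(E)\otimes\C)$ and the tautological line is a reasonable way to complete this.
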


Certainly in algebraic geometry Calabi--Yau varieties and Fano varieties behave very differently. For example there are precisely 105 different deformation families of smooth algebraic Fano threefolds \cite{Iskovskikh1977Fano-3-folds-I,Iskovskikh1978Fano-3-folds-II,Mori1981Classification-,Mori2003Erratum-Classif}. Smooth algebraic Calabi--Yau threefolds, on the other hand, are far more plentiful. Thousands of topologically distinct examples are currently known \cite{Reid2002Update-on-3-fol}, although the question of whether the total number is finite is still open.

One might also expect such differences between symplectic ``Fanos'' and ``Calabi--Yaus''. Symplectic Calabi--Yaus in real dimension six are known to be much more abundant than those arising in algebraic geometry \cite{Fine2011The-diversity-o}. On the other hand, there is no known example of a symplectic non-algebraic Fano in real dimension 6. (To date, the only known examples of symplectic non-algebraic Fanos start in real dimension 12 \cite{Fine2010Hyperbolic-geom}.) 

With this in mind, it seems plausible to conjecture that a symplectic Fano $Z$ arising via a positive definite connection is in fact algebraic. Given the topology of $Z$ as an $S^2$-bundle over a 4-manifold and certain other numerical facts (most notably that the canonical bundle has a square root) it is not difficult to check that the only possibilities on the list of smooth algebraic Fano threefolds are $\C\P^3$ or the complete flag $F(\C^3)$. (See \S6 of \cite{Fine2009Symplectic-Cala} for details.) Meanwhile, given a perfect positive-definite connection, a result of Hitchin \cite{Hitchin1981Kahlerian-twist} guarantees the corresponding metric is the standard Einstein metric on either $S^4$ or $\C\P^2$. From here it is straightforward to check that the resulting symplectic manifold is $\C\P^3$ or $F(\C^3)$ respectively. 

Suppose then that one could prove Conjecture \ref{positive conjecture} by finding a path of positive-definite connections joining the given one to a perfect connection. This would give a family of cohomologous symplectic forms on $Z$ deforming the symplectic structure to the standard one on $\C\P^3$ or $F(\C^3)$. By Moser's theorem, all the symplectic forms would be equivalent up to diffeomorphism, in particular confirming the conjecture that $Z$ is algebraic. In the other direction, if one could prove that $Z$ were algebraic it would provide strong evidence for Conjecture \ref{positive conjecture}.

\subsection{Energy and a flow for definite connections}

We turn now to an energy functional whose topological minimum, when it exists, corresponds to an anti-self-dual Einstein metric of non-zero scalar curvature. As we will see, the functional is similar in some sense to the Yang--Mills functional. Let $E \to X$ be an $\SO(3)$-bundle. A choice of fibrewise orientation gives an isometry $E \cong \so(E)$. In all that follows we freely identify $E \cong \so(E) \cong \so(E)^* \cong E^*$. For example, the curvature of a connection $F_A \in \Lambda^2 \otimes \so(E)$ will frequently be interpreted as a homomorphism $F_A \colon E \to \Lambda^2$. When the connection is definite, and so determines a Riemannian metric $g_A$, the curvature gives an isomorphism $E \to \Lambda^+_A$. Similarly, we write $S^2E$ for both the bundle of symmetric bilinear forms on $E^* \cong E$ and the bundle of self-adjoint endomorphisms of $E$; we freely identify sections of the two using the fibrewise metric on~$E$.

\begin{definition}\label{Q}
Let $A$ be a definite connection. Pulling back the Riemannian inner-product from $\Lambda_A^+$ determines a symmetric bilinear form on $E$ which we denote by $Q(A)$. Explicitly, for $u,v \in E$,
\[
Q(A)(u,v) = \frac{F_A(u)\wedge F_A(v)}{\mu(A)}.
\] 
(Recall that $\mu(A) = \frac{8\pi^2}{3}p_1(A)$ is the volume-form of $g_A$.) If $e_1, e_2, e_3$ is a local orthonormal frame for $E$ and $F_A = \sum F_i \otimes e_i$ then with respect to this basis $Q(A)$ corresponds to the matrix
\[
Q(A)_{ij} = \frac{F_i \wedge F_j}{\mu(A)}.
\]
\end{definition}

Note that $A$ is perfect precisely when $Q(A)$ agrees with the original fibrewise inner-product on the $\SO(3)$-bundle $E$. Equivalently, $A$ is perfect when $Q(A)_{ij} = \delta_{ij}$.

Now, the original inner-product in $E$ enables us to define $|Q(A)|^2$. Locally, in terms of the matrix $Q(A)_{ij}$,  $|Q(A)|^2= \sum_{ij} Q(A)_{ij}Q(A)_{ji}$. 

\begin{definition}\label{definition of energy}
We define the \emph{energy} of a definite connection $A$ to be
\[
\E(A) = \int_X |Q(A)|^2\, \mu(A).
\]
\end{definition}

\begin{remark}
This functional and generalisations have also been considered by Krasnov in a series of interesting works \cite{Krasnov2011Gravity-as-a-di,Krasnov2011Pure-Connection}. Krasnov is interested in a possible quantisation of Einstein's equations, in which the variable is the connection $A$, rather than the metric. In particular, \cite{Krasnov2011Pure-Connection} shows that Einstein metrics (not necessarily anti-self-dual as in this work) can be described as coming from connections that are critical points of the functional
\[
\E'(A) = \int_X \left(\tr \sqrt{Q(A)}\right)^2 \mu(A)
\]
This gives an interesting reinterpretation of the Einstein equations in terms of connections which is worthy of further study.
\end{remark}

\begin{proposition}\label{topological lower bound}
There is a topological lower bound $\E(A) \geq 8 \pi^2 p_1(E)$. This lower bound is realised if and only if $A$ is perfect.
\end{proposition}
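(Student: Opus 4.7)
The plan is to reduce the inequality to a pointwise algebraic one about the self-adjoint endomorphism $Q(A) \in S^2 E$, using the Cauchy--Schwarz inequality on its eigenvalues, together with the identification $\mu(A) = \tfrac{8\pi^2}{3}p_1(A)$.

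First I would observe that, because $F_A \colon E \to \Lambda^+_A$ is an isomorphism (that is the content of $A$ being definite) and $Q(A)$ is obtained by pulling back the inner product on $\Lambda^+_A$, the endomorphism $Q(A)$ is self-adjoint and positive definite at every point. Let its eigenvalues be $\lambda_1,\lambda_2,\lambda_3>0$, so that
\[
|Q(A)|^2 \;=\; \tr(Q(A)^2) \;=\; \lambda_1^2+\lambda_2^2+\lambda_3^2.
\]

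Next I would compute $\tr Q(A)$ by working in a local orthonormal frame $e_1,e_2,e_3$ of $E$ and using Definition~\ref{definition metric from connection}, which defines the volume form by $\mu(A) = \tfrac13 \sum_i F_i\wedge F_i$. Then
\[
\tr Q(A) \;=\; \sum_i \frac{F_i\wedge F_i}{\mu(A)} \;=\; \frac{3\mu(A)}{\mu(A)} \;=\; 3,
\]
so $\lambda_1+\lambda_2+\lambda_3=3$ pointwise. The Cauchy--Schwarz inequality (or the power-mean inequality) now yields
\[
|Q(A)|^2 \;=\; \lambda_1^2+\lambda_2^2+\lambda_3^2 \;\geq\; \frac{(\lambda_1+\lambda_2+\lambda_3)^2}{3} \;=\; 3,
\]
with equality precisely when $\lambda_1=\lambda_2=\lambda_3=1$, i.e.\ when $Q(A)=\Id$, which is exactly the perfectness condition.

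Integrating the pointwise bound and using $\int_X \mu(A) = \tfrac{8\pi^2}{3} p_1(E)$ from Definition~\ref{definition metric from connection} then gives
\[
\E(A) \;=\; \int_X |Q(A)|^2\, \mu(A) \;\geq\; 3\int_X \mu(A) \;=\; 8\pi^2\, p_1(E),
\]
with equality if and only if $Q(A)\equiv \Id$, i.e.\ $A$ is perfect. There is no real obstacle here; the only subtlety is keeping track of the normalisation constant in $\mu(A)=\tfrac{8\pi^2}{3}p_1(A)$ and of the fact that the $\tfrac13$ in the formula for $\mu$ is exactly what forces $\tr Q=3$, which in turn makes the topological lower bound come out clean.
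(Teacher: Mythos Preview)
Your proof is correct and follows essentially the same route as the paper: both hinge on computing $\tr Q(A)=3$ from the normalisation of $\mu(A)$, deducing the pointwise bound $|Q(A)|^2\geq 3$ with equality iff $Q(A)=\Id$, and then integrating against $\mu(A)=\tfrac{8\pi^2}{3}p_1(A)$. The only cosmetic difference is that the paper phrases the pointwise inequality via the orthogonal decomposition $Q(A)=\Id+Q_0(A)$ (so $|Q(A)|^2=3+|Q_0(A)|^2$), whereas you phrase it via Cauchy--Schwarz on the eigenvalues; these are the same computation, and your observation that $Q(A)$ is positive definite, while true, is not actually needed.
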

\begin{proof}
Let $e_1, e_2, e_3$ be a local orthonormal frame for $E$. By definition,
\[
\mu(A) = \frac{1}{3}\sum F_A(e_i)\wedge F_A(e_i) = \frac{8\pi^2}{3}p_1(A).
\]
Hence $\tr Q(A) = 3$ and so we may write $Q(A) = \Id +Q_0(A)$ as trace and trace-free parts. Hence
\[
\int_X |Q(A)|^2\, \mu(A)
=
8 \pi^2 \int_X p_1(A) + \int_X |Q_0(A)|^2\, \mu(A).
\]
It follows that $\E(A) \geq 8 \pi^2 p_1(E)$ with equality if and only if $Q_0(A)=0$, i.e., if and only if $Q(A)=\Id$ which is equivalent to $F_A \colon E \to \Lambda^+_A$ being an isometry.
\end{proof}

This result is reminiscent of Yang--Mills theory over 4-manifolds. There, one considers metric connections in an $\SO(3)$-bundle $E \to X$ over a Riemannian manifold. The Yang--Mills functional---the $L^2$-norm of the curvature tensor---is bounded below by $8\pi^2 p_1(E)$ and this bound is realised precisely by instantons. There are two immediate distinctions between that situation and the one considered here. Firstly, for definite connections, the Riemannian metric is no longer fixed, rather it depends on the connection. (Note that, by definition, $A \in \D$ is a $g_A$-instanton, indeed $g_A$ is the unique metric which makes $F_A$ self-dual.) Secondly, in Yang--Mills theory, the bundle $E$ is auxiliary and can be chosen arbitrarily. In our situation this is no longer the case. The existence of a definite connection means that $E$ is isomorphic to $\Lambda^+_A$. Since the space of conformal classes is connected, the corresponding bundles of self-dual 2-forms are always isomorphic. This means that the isomorphism class of $E$ is predetermined.

Notice that this implies $p_1(E) = p_1(\Lambda^+)= 2\chi(X) + 3 \tau(X)$ where $\chi(X)$ and  $\tau(X)$ are the Euler characteristic and signature of $X$ respectively. Since, for a definite connection, the volume form is given by $\mu(A) = \frac{8\pi^2}{3}p_1(A)$ we deduce the following result (which first appeared in \cite{Fine2009Symplectic-Cala}).

\begin{lemma}\label{half of Hitchin-Thorpe}
Let $X$ be a compact 4-manifold which admits a definite connection. Then $2\chi(X) + 3\tau(X) >0$.
\end{lemma}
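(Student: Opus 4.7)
The plan is to extract the inequality directly from the integral $\int_X \mu(A) > 0$, combined with the two crucial identities supplied earlier in the text: first, that $\mu(A) = \tfrac{8\pi^2}{3} p_1(A)$ as a differential form (Definition \ref{definition metric from connection}); and second, that a definite connection $A$ identifies the $\SO(3)$-bundle $E$ with $\Lambda^+_A$, so that $p_1(E) = p_1(\Lambda^+)$ as topological invariants of $X$.

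First I would observe that, since $A$ is definite, the curvature $F_A \colon E \to \Lambda^2$ has image a definite 3-plane at each point, and hence $\mu(A) = \frac{1}{3}\sum F_i \wedge F_i$ is a strictly positive multiple of any chosen orientation form; it is, by construction, the Riemannian volume form of $g_A$. Therefore
\[
0 < \int_X \mu(A) = \frac{8\pi^2}{3}\int_X p_1(A) = \frac{8\pi^2}{3}\, p_1(E)[X].
\]
So $p_1(E)[X] > 0$.

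Next I would convert $p_1(E)$ into a topological invariant of $X$ alone. Since $F_A$ is a bundle isomorphism $E \cong \Lambda^+_A$, we have $p_1(E) = p_1(\Lambda^+_A)$. Although $\Lambda^+_A$ depends on $A$ through the conformal class of $g_A$, the bundle of self-dual 2-forms of any two metrics on $X$ are isomorphic (the space of conformal classes being connected), and hence $p_1(\Lambda^+_A)$ coincides with $p_1(\Lambda^+)$ of any reference metric. The final step is the standard characteristic class identity $p_1(\Lambda^+)[X] = 2\chi(X) + 3\tau(X)$ (proved, for instance, by viewing $\Lambda^\pm$ as the two $\SO(3)$-factors of $\mathfrak{so}(4) = \mathfrak{so}(3) \oplus \mathfrak{so}(3)$ under the metric splitting $\Lambda^2 T^*X = \Lambda^+ \oplus \Lambda^-$, and combining $p_1(\Lambda^+) + p_1(\Lambda^-) = p_1(T^*X) = 3\tau$ with $p_1(\Lambda^+) - p_1(\Lambda^-) = 2e(TX) = 2\chi$). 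Combining these yields the claimed strict inequality.

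There is no real obstacle here: the argument is essentially a single line once the positivity of $\mu(A)$ and the identification $E \cong \Lambda^+_A$ are in hand. The only mild subtlety is justifying that $p_1(E)$ equals the purely topological quantity $p_1(\Lambda^+)$ independent of any metric, but this is immediate from the connectedness of the space of Riemannian metrics on $X$.
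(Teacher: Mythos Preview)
Your proof is correct and follows exactly the approach the paper takes: positivity of the volume $\int_X \mu(A) = \tfrac{8\pi^2}{3}\,p_1(E)[X]$, together with the identification $E\cong \Lambda^+_A$ and the standard formula $p_1(\Lambda^+)[X]=2\chi(X)+3\tau(X)$.

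One small correction, though, to the parenthetical sketch you give for the last identity. The two auxiliary formulas you quote are each off by a factor of~$2$: in fact
\[
p_1(\Lambda^+)+p_1(\Lambda^-)=2\,p_1(TX)=6\tau,
\qquad
p_1(\Lambda^+)-p_1(\Lambda^-)=4\chi,
\]
not $3\tau$ and $2\chi$. (A quick check on $S^4$: $p_1(\Lambda^+)=4$, $p_1(\Lambda^-)=-4$, while $p_1(TS^4)=0$; your version would give $p_1(\Lambda^+)+p_1(\Lambda^-)=0$ but also $p_1(\Lambda^+)=\tfrac12(2\chi+3\tau)=2$, contradicting $p_1(\Lambda^+)=4$.) The slip is easy to make: under orientation reversal $\Lambda^+\leftrightarrow\Lambda^-$ \emph{and} $[X]\mapsto -[X]$, so $p_1(\Lambda^-)[X]=-2\chi+3\tau$, not $2\chi-3\tau$. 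This does not affect your argument, since the identity $p_1(\Lambda^+)=2\chi+3\tau$ itself is correct and well known; only the suggested derivation needs adjusting.
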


Note that the condition $2\chi(X)+ 3\tau(X) >0$ is necessary for $X$ to admit an Einstein metric with non-zero scalar curvature. Indeed, the Hitchin--Thorpe inequality states that when $X$ admits an Einstein metric with non-zero scalar curvature then the stronger inequality $2\chi(X) > 3|\tau(X)|$ holds \cite{Hitchin1974Compact-four-di,Thorpe1969Some-remarks-on}.

We next consider the gradient flow of $\E \colon \D \to \R$. For this we need to define a Riemannian metric on $\D$. Since $\D$ is defined by a strict inequality, it is an open set in the space of all metric connections in $E$ (for, say, the $C^\infty$-topology). So the tangent space at $A \in \D$ is $T_A\D = \Omega^1(X, \so(E))$. Since $A$ also defines a Riemannian metric $g_A$ on $X$ we can use the $L^2(g_A)$-inner-product on $T_A\D$ to define a Riemannian metric on $\D$. For $a, b \in T_A\D$, we define their inner-product to be
\[
\langle a,b \rangle
=
\int_X
(a,b)_{g_A}\, \mu(A),
\]
where $(a,b)_{g_A}$ is the pointwise inner-product on $\Lambda^1 \otimes \so(E)$ determined by~$g_A$.

With this definition in hand, the downward gradient flow of $\E$ makes sense. As we explain in Proposition \ref{equation for gradient flow}, the flow is a sort of twisted version of the Yang--Mills flow. This reinforces the idea that $\E$ is, in some sense, analogous to the Yang--Mills functional. In any case, adapting ideas from the Yang--Mills flow, we prove the following result. (The proof hinges on showing that the flow is parabolic modulo gauge.)  

\begin{theorem*}
~
\begin{enumerate}
\item
Given any definite connection $A_0$, the downward gradient flow of $\E$ starting at $A_0$ exists for short time.
\item
The flow is unique for as long as it exists.
\item
If $A$ is a perfect connection and $A_0$ is sufficiently close to $A$ then the flow starting at $A_0$ exists for all time and converges modulo gauge to $A$. 
\end{enumerate}
\end{theorem*}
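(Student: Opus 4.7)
The plan is to adapt the strategy used for the Yang--Mills flow, namely to combine the DeTurck trick with a Lojasiewicz--Simon estimate near a critical point. Write $\nabla\E$ for the $L^{2}(g_A)$-gradient of the energy, so that the flow equation reads $\dot A_t = -\nabla\E(A_t)$. As for the Yang--Mills flow, this equation is invariant under the gauge group $\Aut(E)$ and hence fails to be strictly parabolic, because the infinitesimal gauge action $\xi \mapsto \diff_A \xi$ lies in the kernel of its linearisation. To restore parabolicity I would fix a reference definite connection $A_0$, equal to the initial condition, and consider the gauge-fixed flow
\begin{equation*}
\dot A_t = -\nabla\E(A_t) - \diff_{A_t}\bigl(\diff_{A_0}^{*}(A_t - A_0)\bigr).
\end{equation*}
The first step is to compute the principal symbol of the right-hand side with respect to $g_{A_t}$ using the explicit formula for $\nabla\E$ supplied by Proposition \ref{equation for gradient flow}, and to check that the extra term cancels the degeneracy coming from gauge invariance, making the resulting operator strongly elliptic on all of $\Omega^{1}(X,\so(E))$.

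Once parabolicity of the gauge-fixed equation is established, standard quasilinear parabolic theory on Sobolev or H\"older spaces yields short-time existence and uniqueness of a solution. Definiteness is an open condition in $C^{0}$, so it is preserved for a short time and the metric $g_{A_t}$ remains well defined. A solution of the original flow is then recovered by applying the time-dependent gauge transformation $g_t$ defined by the ODE $\dot g_t = -g_t\,\diff_{A_0}^{*}(A_t - A_0)$. Conversely, given two solutions of the original flow with the same initial value, one brings both into DeTurck gauge relative to $A_0$ via such ODEs and invokes uniqueness for the modified parabolic equation; this establishes parts (1) and (2).

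For part (3), let $A$ be perfect and set up the analysis on the Coulomb slice $\{a : \diff_A^{*} a = 0\}$ through $A$ in $\D$. By the ellipticity and Dirac-type description established in \S\ref{elliptic} and \S\ref{dirac operator}, the Hessian of $\E$ at $A$ restricted to this slice is a self-adjoint Fredholm operator. Since $A$ realises the topological lower bound of Proposition \ref{topological lower bound}, this Hessian is non-negative, with kernel tangent to the moduli space of perfect connections near $A$. Combined with the real-analytic dependence of $\E$ on $A$, these two facts are the standard input for a Lojasiewicz--Simon inequality of the form
\begin{equation*}
\bigl|\E(A') - \E(A)\bigr|^{1-\theta} \leq C\,\|\nabla\E(A')\|_{L^{2}(g_{A'})},
\end{equation*}
valid for $A'$ in a small $C^{2,\alpha}$-neighbourhood of $A$ and some $\theta \in (0,\tfrac{1}{2}]$. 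Coupled with the energy monotonicity $\tfrac{\diff}{\diff t}\E(A_t) = -\|\nabla\E(A_t)\|^{2}$, this inequality bounds the $L^{2}$-length of the flow path, prevents escape from the neighbourhood (and in particular preserves definiteness), yields long-time existence, and forces convergence modulo gauge to a perfect connection.

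The main obstacle is proving the Lojasiewicz--Simon inequality in this setting. Because the $L^{2}$-inner product used to define $\nabla\E$ itself depends on $A$, the textbook Hilbert-space Lojasiewicz theorem for analytic functionals does not apply verbatim. One must either work on a fixed Coulomb slice through $A$ with the frozen background metric $g_A$ and absorb the discrepancy as a controlled lower-order error, or adapt Simon's quasi-linear version of the inequality directly to metric-dependent gradients. Analyticity of $\E$ and the Fredholm property of its Hessian are straightforward; the delicate step is to ensure the estimate is compatible with the parabolic regularity one has along the flow, so that subsequential convergence can be upgraded to convergence of the full trajectory.
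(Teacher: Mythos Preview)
Your DeTurck term is too small: the flow is invariant not only under bundle automorphisms covering the identity but under the full group $\G$ of bundle isometries of $E$, whose Lie algebra (see \S\ref{linearised action}) splits as $\Omega^0(X,\so(E))\oplus C^\infty(TX)$, with a vector field $u$ acting by $A\mapsto A-\iota_uF_A=-S_A(u)$. The symbol of the linearised gradient $\mathcal L_A=-\diff_A^*P_A^*P_A\diff_A$ (Proposition~\ref{principal part}) therefore has a seven-dimensional kernel at each nonzero $\alpha$, namely the image of $S_A+w_\alpha$ (Lemma~\ref{parabolic exact sequence}). Your correction $-\diff_{A_t}\diff_{A_0}^*(A_t-A_0)$ has symbol $-w_\alpha w_\alpha^*$ and kills only the three-dimensional $w_\alpha$-part; the four-dimensional diffeomorphism degeneracy along $\im S_A$ survives, so your modified flow is \emph{not} strongly parabolic. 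The paper's cure is to add a second DeTurck term built from $S_A$, giving
\[
\dot A=-\diff_A^*(B_A(F_A))-\diff_A\diff_A^*a-S_AS_A^*\diff_A^*\diff_Aa,
\]
and Theorem~\ref{adjusted_flow_is_parabolic} checks that the combined symbol is negative definite. The same issue recurs in your part~(3): the Coulomb slice $\diff_A^*a=0$ is not a genuine slice for the $\G$-action, and the Hessian restricted to it will still be degenerate in the $S_A$-directions.

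For part~(3) your strategy also diverges from the paper's. Rather than Lojasiewicz--Simon, the paper exploits the twistor-theoretic rigidity Theorem~\ref{rigidity} ($\ker D^-=0$) to show that the linearisation $G_{\hat A}$ of a correctly gauge-fixed flow at a perfect connection $\hat A$ is strictly positive definite, and then runs an implicit function theorem in weighted parabolic Sobolev spaces to obtain long-time existence with exponential convergence. Your Lojasiewicz--Simon route is in principle viable and more robust (it would survive a nontrivial moduli space of perfect connections), but as you note the $A$-dependence of the $L^2$ metric is a real nuisance, and in any case you must first repair the gauge-fixing so that the Hessian is Fredholm on the slice.
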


For more precise statements and the proofs see \S\ref{exists_unique} (existence and uniqueness) and \S\ref{local_stability_section} (local stability).

There is a near-identical discussion for the definite triples of symplectic forms which appear in Donaldson's Conjecture \ref{skd_conjecture}. We describe the corresponding energy functional and its gradient flow in \S\ref{flow for definite triples}.

An immediate and important question to be addressed when using variational methods to attempt to solve a PDE is to decide if the energy functional has any potential critical points besides the sought-after topological minimum. In the case of definite triples we are able to rule out such intermediate critical points; Proposition \ref{unique critical point} gives that, for a compact 4-manifold, the only possible critical point is a hyperkähler triple. So far we do not know if an analogous result holds in the case of definite connections.

\subsection{A moment-map interpretation}

We finish the article in \S\ref{moment map interpretation} with a discussion of another symplectic aspect of definite connections, this time in an infinite dimensional setting. As we will explain, there is an infinite dimensional symplectic manifold $\s$ in which the space of definite connections embeds $\D \subset \s$ as an isotropic subspace. There is an infinite-dimensional group which acts by symplectomorphisms on $\s$ with a moment map $m$. The perfect connections, which determine anti-self-dual Einstein metrics, are precisely the points of $m^{-1}(0) \cap \D$. It will be interesting to see if the moment-map perspective can provide insight into this problem, much as it has done for other geometric PDEs.

\section{Perfect connections}\label{perfect connections}

In this section we prove the following result, originally due to Plebanski and then reinterpreted by Capovilla--Jacobson--Dell. The original proof uses terminology which is perhaps unfamiliar to mathematicians; moreover the calculations are spread over three papers \cite{Plebanski1977On-the-separati,Capovilla1990Gravitational-i,Capovilla1991Self-dual-2-for}. For this reason we give a self-contained proof here, using the notation defined above. Our proof also serves to introduce ideas central to the later study of the equation over the space of definite connections as well as  highlighting the analogy with hyperkähler metrics. 

\begin{theorem}[Plebanski, Capovilla--Jacobson--Dell]
\label{perfect implies asdE}
Let $A$ be a perfect connection over a 4-manifold. Then the corresponding metric $g_A$ is anti-self-dual and Einstein with non-zero scalar curvature.
\end{theorem}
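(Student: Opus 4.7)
The plan is to identify, under the pointwise isometry $F_A\colon E \to \Lambda^+_A$ given by the perfect condition, the connection $A$ with the induced Levi-Civita connection on $\Lambda^+_A$. Once this identification is secured, reading off the Singer--Thorpe decomposition of the Riemann curvature will yield all three conclusions simultaneously.

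First, I would work in a local oriented orthonormal frame $e_1, e_2, e_3$ of $E$, so that $F_A = \sum F_i \otimes e_i$. The perfect condition $F_i \wedge F_j = \delta_{ij}\mu$ combined with the definition of $g_A$ makes $(F_1, F_2, F_3)$ a local orthonormal frame of $\Lambda^+_A$, confirming that $F_A$ is a pointwise isometry. Writing the $\so(E)$-valued connection 1-form of $A$ in this frame as $(a_{ij})$ with $a_{ij} = -a_{ji}$, the Bianchi identity $\diff_A F_A = 0$ reads
\[
\diff F_i = \sum_j a_{ij} \wedge F_j.
\]
Since the Levi-Civita connection of $g_A$ is torsion-free, applying the standard identity $\diff\alpha = \sum_k e^k \wedge \nabla^{LC}_{e_k}\alpha$ to the self-dual 2-forms $F_i$ gives
\[
\diff F_i = \sum_j b_{ij} \wedge F_j,
\]
where $(b_{ij})$, $b_{ij} = -b_{ji}$, are the Levi-Civita connection 1-forms on $\Lambda^+_A$ in the frame $(F_i)$.

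The key step is then a uniqueness lemma: if $(c_{ij})$ is a triple of local 1-forms, antisymmetric in $i,j$, with $\sum_j c_{ij} \wedge F_j = 0$ for each $i$, then $c_{ij} = 0$. This is a pointwise linear algebra statement about the wedge map $\Lambda^1 \otimes \so(\Lambda^+_A) \to \Lambda^3 \otimes \Lambda^+_A$; both sides have rank $12$, and the fact that each $F_j$ is a nondegenerate self-dual 2-form (so wedging with it is an isomorphism $\Lambda^1 \to \Lambda^3$) gives injectivity. Applying the lemma to $a_{ij} - b_{ij}$ yields $a_{ij} = b_{ij}$, so that $A$, transported via $F_A$, coincides with $\nabla^{LC}|_{\Lambda^+_A}$. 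Under this identification, the curvature of $A$ becomes the Riemann curvature operator of $g_A$ restricted to $\Lambda^+_A$, viewed as an element of $\Omega^2 \otimes \so(\Lambda^+_A) \simeq \Omega^2 \otimes \Lambda^+_A$. The Singer--Thorpe decomposition relative to $\Omega^2 = \Lambda^+_A \oplus \Lambda^-_A$ identifies the $\Lambda^+_A \otimes \Lambda^+_A$ component with $W^+ + \frac{s}{12}\Id$ and the $\Lambda^-_A \otimes \Lambda^+_A$ component with the traceless Ricci. But the definite condition places $F_A$ entirely inside $\Lambda^+_A \otimes \Lambda^+_A$, so the traceless Ricci vanishes, giving Einstein. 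The perfect condition further identifies $F_A$ with the canonical tensor $\sum F_i \otimes F_i \in \Lambda^+_A \otimes \Lambda^+_A$, which corresponds to $\Id_{\Lambda^+_A}$ under $\Lambda^+_A \otimes \Lambda^+_A \simeq \End(\Lambda^+_A)$. Hence $W^+ + \frac{s}{12}\Id = \Id$; tracelessness of $W^+$ then forces $W^+ = 0$ (so $g_A$ is anti-self-dual) and pins $s$ down to a non-zero constant.

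The main obstacle I expect is this uniqueness lemma together with the identification of $A$ with $\nabla^{LC}|_{\Lambda^+_A}$: although ultimately pointwise linear algebra, tracking the identifications $\so(E) \simeq E$ and $\so(\Lambda^+_A) \simeq \Lambda^+_A$ (together with the orientation conventions built into the cross product) requires some bookkeeping before the Bianchi and torsion-free identities can be validly compared and matched.
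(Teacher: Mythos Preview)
Your argument is correct and, for the first half, follows essentially the same route as the paper: the uniqueness lemma you state is precisely Proposition~\ref{LC} (the Levi--Civita connection on $\Lambda^+$ is the unique torsion-free metric connection), and the comparison of the Bianchi identity for $A$ with the torsion-free identity for $\nabla^{LC}$ is the content of Lemmas~\ref{perfect is LC} and~\ref{torsion}. One small caveat: the bare nondegeneracy of each $F_j$ is not by itself enough to justify injectivity of the $12\times 12$ wedge map; the paper's proof of Proposition~\ref{LC} uses the quaternionic relations satisfied by the $J_i(\alpha)=\ast(\alpha\wedge F_i)$, which come from the $F_i$ being an \emph{orthonormal self-dual} frame, not just three nondegenerate forms.

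Where you genuinely diverge from the paper is in the second half. Having identified $A$ with $\nabla^{LC}|_{\Lambda^+_A}$, you push the curvature $F_A=\sum F_i\otimes e_i$ forward through $e_i\mapsto F_i$ and read off $R=\sum F_i\otimes F_i=\pm\Id_{\Lambda^+_A}$ directly (the sign being exactly the sign of the definite connection), so $H=0$, $W^+=0$, and $s\neq 0$ all drop out at once. The paper instead invokes Lemma~\ref{metric perfect implies asdE}: it only records that the Levi--Civita connection on $\Lambda^+$ is itself perfect, then argues separately that $g_A$ is conformal to $g$, that the eigenvalues of $G$ all square to $1$, and finally rules out mixed signs by showing a sign discrepancy would force a parallel section of $\Lambda^+$, contradicting definiteness. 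Your route is shorter and more conceptual, but---as you anticipate---it leans on the compatibility of the two cross-product identifications $\so(E)\cong E$ and $\so(\Lambda^+_A)\cong\Lambda^+_A$ under the isometry $F_A$, which is orientation-sensitive and must be checked. The paper's longer argument buys freedom from that bookkeeping and yields Lemma~\ref{metric perfect implies asdE} as a standalone Riemannian statement.
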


(See definitions \ref{definition definite connection}, \ref{definition metric from connection} and \ref{definition perfect connection} for explanations of definite connections, the metric $g_A$ associated to a definite connection $A$, and perfect connections.)

\subsection{The Levi-Civita connection for a perfect connection}

We suppose that $A$ is a perfect connection. The first step in the proof of Theorem \ref{perfect implies asdE} is to prove the following Lemma, identifying the Levi-Civita connection of $g_A$.

\begin{lemma}\label{perfect is LC}
Let $A$ be a perfect connection in an $\SO(3)$-bundle $E$. If we use $F_A$ to identify $E \cong \Lambda^+_A$, then $A$ is identified with the Levi-Civita connection on $\Lambda^+_A$.
\end{lemma}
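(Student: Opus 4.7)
The strategy is to exploit the Bianchi identity $d_A F_A = 0$ and match it against a linear algebraic characterisation of the Levi-Civita connection on $\Lambda^+$. The auxiliary result I would first establish (presumably this is Proposition~\ref{LC}, credited to Singer in the acknowledgements) is the following: a metric connection $\nabla$ on $\Lambda^+$ is the Levi-Civita connection if and only if, for some (equivalently, any) local orthonormal frame $\omega_1,\omega_2,\omega_3$ of $\Lambda^+$ with associated connection matrix $(\Gamma_{ij})$, one has
\[
d\omega_i + \sum_{j=1}^{3} \Gamma_{ij}\wedge\omega_j = 0, \qquad i=1,2,3.
\]
The existence half follows from the standard identity $d\alpha = \mathrm{Alt}(\nabla^{LC}\alpha)$ valid for any $2$-form $\alpha$ and any torsion-free connection $\nabla^{LC}$, combined with the fact that $\nabla^{LC}$ preserves the splitting $\Lambda^2=\Lambda^+\oplus\Lambda^-$. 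For uniqueness I would check that the pointwise wedge map $\so(3)\otimes(\R^4)^*\to\bigoplus_{i=1}^3\Lambda^3(\R^4)^*$ sending $(\Gamma_{ij})$ to $\bigl(\sum_j\Gamma_{ij}\wedge\omega_j\bigr)_i$ is an isomorphism; both sides are $12$-dimensional, and injectivity can be verified by a direct computation in the standard self-dual basis.

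Granted this, the lemma is immediate. Let $A$ be perfect, $e_1,e_2,e_3$ a local orthonormal frame of $E$ with $A$-connection matrix $(A_{ij})$, and write $F_A=\sum F_i\otimes e_i$. The Bianchi identity $d_A F_A=0$ reads
\[
dF_i + \sum_{j=1}^3 A_{ij}\wedge F_j = 0.
\]
Since $A$ is perfect, the identities $F_i\wedge F_j=\delta_{ij}\mu$ say exactly that the $F_i$ form an orthonormal frame of $\Lambda^+_A$, and the isometry $F_A\colon E\to\Lambda^+_A$ sends $e_i$ to $F_i$. The pushforward of $A$ to $\Lambda^+_A$ is therefore a metric connection whose matrix in the frame $(F_i)$ is $(A_{ij})$; the Bianchi identity above is thus precisely the characterising equation of the Levi-Civita connection, and uniqueness forces the pushforward to coincide with $\nabla^{LC}$ on $\Lambda^+_A$.

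The main obstacle is the uniqueness half of the characterising lemma; this is where the self-duality genuinely matters. The analogous wedge map on all of $\Lambda^2$ has a non-trivial kernel, reflecting the usual ambiguity in metric connections on $TX$ that is killed only by imposing torsion-freeness. Restricting to the self-dual summand removes exactly this ambiguity, and once the resulting linear algebra fact is in place the remainder of the argument is a one-line translation: Bianchi for an $\SO(3)$-connection is identified, via the isometry $F_A$, with the ``self-dual part'' of the torsion-free condition on $TX$.
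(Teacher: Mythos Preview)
Your proposal is correct and follows essentially the same route as the paper: the key input is exactly Proposition~\ref{LC} (the Levi-Civita connection on $\Lambda^+$ is the unique torsion-free metric connection), and the Bianchi identity is then used to show that the push-forward of $A$ via the isometry $F_A$ is torsion-free and metric. The only cosmetic difference is in how uniqueness is argued: you propose a dimension count plus injectivity of the wedge map $\so(3)\otimes(\R^4)^*\to\bigoplus_{i=1}^3\Lambda^3(\R^4)^*$, whereas the paper explicitly inverts this map using the quaternionic structures $J_i(\alpha)=\ast(\alpha\wedge\theta_i)$ to solve for the connection $1$-forms; the paper also isolates the ``Bianchi $\Rightarrow$ torsion-free'' step as a separate Lemma~\ref{torsion}, but the content is identical to your in-frame computation.
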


The key is the following standard result from 4-dimensional Riemannian geometry, which was explained to us by Michael Singer. We give the proof for lack of an explicit reference. Let $X$ be an oriented Riemannian 4-manifold. Recall that a connection on $\Lambda^+$ is called \emph{torsion-free} if the composition of covariant differentiation $C^\infty(\Lambda^+) \to C^\infty(T^*X\otimes \Lambda^+)$ with skew-symmetrisation $s \colon T^*X \otimes \Lambda^+ \to \Lambda^3$ equals the exterior derivative. For an arbitrary connection on $\Lambda^+$, the difference $s \circ \nabla - \diff$ is a section of $\Lambda^3 \otimes (\Lambda^+)^*$, called the torsion of $\nabla$, which we write as $\tau$.

\begin{proposition}\label{LC}
Let $X$ be an oriented 4-manifold with Riemannian metric $g$. The Levi-Civita connection on $\Lambda^+$ is the unique torsion-free metric connection.
\end{proposition}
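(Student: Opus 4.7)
The plan is to prove existence and uniqueness of a torsion-free metric connection on $\Lambda^+$ separately.

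\emph{Existence.} The Levi-Civita connection $\nabla^g$ on $TX$ preserves the metric on $\Lambda^2T^*X$ and the splitting $\Lambda^2T^*X = \Lambda^+\oplus\Lambda^-$, so it induces a metric connection on $\Lambda^+$. To see this induced connection is torsion-free in the sense defined just before the proposition, I would invoke the standard identity that for any torsion-free connection on $TX$ and any differential form $\alpha$, the exterior derivative $\diff\alpha$ equals the total skew-symmetrisation of $\nabla^g\alpha$. Specialising to $\alpha \in \Gamma(\Lambda^+)$ gives exactly the required identity $s(\nabla\alpha) = \diff\alpha$.

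\emph{Uniqueness.} Suppose $\nabla_1$ and $\nabla_2$ are two torsion-free metric connections on $\Lambda^+$, and let $a = \nabla_1 - \nabla_2$, a section of $T^*X \otimes \so(\Lambda^+)$. Using the fibrewise metric and natural orientation on $\Lambda^+$, identify $\so(\Lambda^+) \cong \Lambda^+$ via the cross-product. The shared torsion-free condition translates into the vanishing of $\sigma(a)$, where
\[
\sigma \colon T^*X \otimes \so(\Lambda^+) \longrightarrow \Hom(\Lambda^+, \Lambda^3)
\]
sends $\alpha \otimes \eta$ to the map $\omega \mapsto \alpha \wedge (\eta\cdot\omega)$. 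Both the source and target have rank $12$, so injectivity of $\sigma$ suffices. I would establish injectivity by a direct fibrewise calculation in a local orthonormal coframe $(e^1,\dots,e^4)$ together with the self-dual basis $\omega_1 = e^{12}+e^{34}$, $\omega_2 = e^{13}-e^{24}$, $\omega_3 = e^{14}+e^{23}$: writing $a = \sum_{i,j} A^j_i\, e^i \otimes \eta_j$ where $\eta_j \in \so(\Lambda^+)$ corresponds to $\omega_j$, the condition $\sigma(a)(\omega_k) = 0$ for $k=1,2,3$ expands in the basis $(e^{123}, e^{124}, e^{134}, e^{234})$ of $\Lambda^3$ into a system of $12$ linear equations, which one solves to conclude that every $A^j_i$ vanishes.

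The only real obstacle is dispatching this $12 \times 12$ linear system cleanly. A representation-theoretic shortcut is available: both $T^*X \otimes \so(\Lambda^+)$ and $\Hom(\Lambda^+, \Lambda^3)$ decompose as $(4,2)\oplus(2,2)$ under $\SO(4)\cong(\SU(2)\times\SU(2))/\Z_2$, and since $\sigma$ is manifestly $\SO(4)$-equivariant, Schur's lemma reduces injectivity to checking that $\sigma$ is nonzero on a single test element drawn from each irreducible summand.
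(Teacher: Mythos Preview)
Your argument is correct and essentially parallel to the paper's, but with a different emphasis at the linear-algebra step. The paper also reduces uniqueness to a pointwise linear problem: writing the connection matrix of a torsion-free metric connection in a local orthonormal frame $\theta_1,\theta_2,\theta_3$ of $\Lambda^+$ as $(\alpha_1,\alpha_2,\alpha_3)$, it observes that the torsion-free condition gives three equations of the form $*\diff\theta_i = J_j(\alpha_k)-J_k(\alpha_j)$ (where $J_i(\beta)=*(\beta\wedge\theta_i)$ are the three local almost complex structures), and then uses the quaternionic relations among the $J_i$ to \emph{invert} this system explicitly, e.g.\ $\alpha_1 = \tfrac{1}{2}\bigl(J_2(*\diff\theta_3)-J_3(*\diff\theta_2)-*\diff\theta_1\bigr)$. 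So the paper exhibits the inverse directly.

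Your approach instead takes the difference of two connections and shows the resulting map $\sigma\colon T^*X\otimes\so(\Lambda^+)\to\Hom(\Lambda^+,\Lambda^3)$ is injective, either by grinding out the $12\times12$ system or---more elegantly---by the $\Spin(4)$ decomposition $(4,2)\oplus(2,2)$ on both sides and Schur's lemma. The representation-theoretic shortcut is a genuine simplification over both the paper's quaternionic manipulation and the brute-force linear algebra you first describe; the price is that you lose the explicit formula for the connection forms, which the paper does not subsequently use anyway. One small point: for the Schur argument to go through you do still owe the reader a one-line check that $\sigma$ is nonzero on each irreducible summand, but this is immediate from evaluating on, say, $e^1\otimes\omega_1$ (which has components in both pieces).
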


\begin{proof}
By definition, the Levi-Civita on $\Lambda^+$ is metric and torsion-free, hence it suffices to show uniqueness. Suppose that $\nabla \colon \Gamma(\Lambda^+) \to \Gamma(\Lambda^+\otimes\Lambda^1)$ is a metric torsion-free connection. Let $\theta_1, \theta_2, \theta_3$ be a local orthonormal basis for $\Lambda^+$. Since $\nabla$ is metric, its connection matrix is given by 1-forms $\alpha_1, \alpha_2, \alpha_3$ satisfying
\begin{eqnarray*}
\nabla\theta_1
	&=&
		\alpha_3\otimes\theta_2- \alpha_2\otimes\theta_3,\\
\nabla \theta_2
	&=&
		- \alpha_3\otimes\theta_1 +\alpha_1\otimes\theta_3,\\
\nabla \theta_3
	&=&
		 \alpha_2\otimes\theta_1	-\alpha_1\otimes\theta_2.
\end{eqnarray*}

The maps $J_i \colon \Lambda^1 \to \Lambda^1$, defined by $J_i(\alpha) = \ast(  \alpha \wedge\theta_i )$ give three (locally defined) almost complex structures on $X$ which satisfy the quaternionic relations. Using this notation and the fact that $\nabla$ is torsion-free gives
\begin{eqnarray*}
~\ast\diff\theta_1
	&=&
		J_2(\alpha_3)-J_3(\alpha_2),\\
~\ast \diff \theta_2
	&=&
		-J_1(\alpha_3)+J_3(\alpha_1),\\
~\ast \diff \theta_3
	&=&
		J_1(\alpha_2)-J_2(\alpha_1).
\end{eqnarray*}
These equations and the quaternionic relations can now be used to determine the $\alpha_i$. For example, $
\alpha_1
=
\frac{1}{2}
\left(
J_2(\ast \diff \theta_3 )
-
J_3 (\ast \diff \theta_2)
-
\ast \diff \theta_1
\right)
$.
It follows that the $\alpha_i$, and hence $\nabla$, are determined entirely by the local orthonormal frame $\theta_i$.
\end{proof}

Distinct conformal classes give rise to distinct bundles $\Lambda^+ \subset \Lambda^2$. Whilst they are all abstractly isomorphic as rank-3 bundles, they are genuinely different when considered as sub-bundles of $\Lambda^2$. We now give a way to simultaneously describe all choices of $\Lambda^+ \subset \Lambda^2$ \emph{and} all torsion-free connections on the choice of $\Lambda^+$. 

Let $V \to X$ be a rank-3 real vector-bundle over a 4-manifold $X$ and $\phi \in \Omega^2(V^*)$ a 2-form with values in $V^*$. Suppose that, when thought of as a bundle homomorphism $\phi \colon V \to \Lambda^2$, the image of $\phi$ is a definite rank-3 bundle. We define a conformal class on $X$ by setting $\Lambda^+$ to be the image of $\phi$. Of course, the existence of such a $\phi$ determines the isomorphism class of $V$. By varying $\phi$ we can realise all possible conformal classes. Now, given a connection $\nabla$ in $V$ we can push it forward via $\phi$ to obtain a connection $\nabla^\phi$ in $\Lambda^+$. All connections in $\Lambda^+$ arise in this way. 

We next give a result which describes those pairs $(\phi, \nabla)$ for which $\nabla^\phi$ is torsion-free on $\Lambda^+ = \im \phi$. The notation $\diff_\nabla \phi \in \Omega^3(V^*)$ means the covariant exterior derivative of $\phi$ with respect to $\nabla$.

\begin{lemma}\label{torsion}
The connection $\nabla^\phi$ is torsion-free if and only if $\diff_{\nabla} \phi = 0$.
\end{lemma}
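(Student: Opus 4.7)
The plan is to reduce the identity to a local computation in a frame, where it becomes a transparent consequence of the Leibniz rule for the covariant exterior derivative. First I would choose a local frame $e_1,e_2,e_3$ for $V$ with dual frame $\xi^1,\xi^2,\xi^3$, and write $\phi = \phi_\alpha\otimes \xi^\alpha$, so that the 2-forms $\phi_\alpha \in \Omega^2$ locally span $\im \phi = \Lambda^+$. Letting $A^\beta_\alpha$ denote the connection 1-forms of $\nabla$ (so $\nabla e_\alpha = A^\beta_\alpha \otimes e_\beta$ and $\nabla\xi^\alpha = -A^\alpha_\beta \otimes \xi^\beta$), a standard unwinding of definitions gives
\[
\diff_\nabla \phi = \bigl(\diff\phi_\alpha - A^\beta_\alpha \wedge \phi_\beta\bigr)\otimes \xi^\alpha.
\]

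The core step is then to evaluate the torsion $\tau$ of $\nabla^\phi$ on a section of the form $\sigma = \phi(v) = v^\alpha \phi_\alpha$, for an arbitrary local section $v = v^\alpha e_\alpha$ of $V$. On the one hand $\diff \sigma = \diff v^\alpha \wedge \phi_\alpha + v^\alpha \diff \phi_\alpha$; on the other hand, by the very definition of $\nabla^\phi$ we have $\nabla^\phi \sigma = \phi(\nabla v)$, whose skew-symmetrisation to a 3-form is $(\diff v^\alpha + A^\alpha_\beta v^\beta)\wedge \phi_\alpha$. Subtracting these and relabelling the dummy indices, I expect to obtain
\[
\tau(\sigma) \;=\; s(\nabla^\phi \sigma) - \diff \sigma \;=\; v^\alpha A^\beta_\alpha \wedge \phi_\beta - v^\alpha \diff\phi_\alpha \;=\; -(\diff_\nabla \phi)(v),
\]
where in the last equality I simply recognise the local formula for $\diff_\nabla\phi$ contracted against $v$.

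Once this pointwise identity is in hand, the conclusion is immediate. Because $\phi$ is fibrewise surjective onto $\Lambda^+$, every local section of $\Lambda^+$ arises as $\phi(v)$ for some local section $v$ of $V$; hence the torsion of $\nabla^\phi$ vanishes identically if and only if $(\diff_\nabla \phi)(v) = 0$ for every local section $v$ of $V$, which is the same as $\diff_\nabla \phi = 0$. I do not anticipate any real obstacle: the whole argument is a one-line Leibniz computation, and the only place to slip up is the sign in the induced connection on $V^*$, which is precisely what makes the two $A$-terms combine into the covariant exterior derivative.
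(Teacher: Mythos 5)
Your proof is correct and takes essentially the same route as the paper: both evaluate the torsion on a section $\phi(v)$ by comparing $\diff(\phi(v))$ with the skew-symmetrisation of $\nabla^\phi(\phi(v)) = \phi(\nabla v)$ via the Leibniz rule, arriving at $\tau(\phi(v)) = \pm(\diff_\nabla\phi)(v)$ and concluding by surjectivity of $\phi$. The only difference is cosmetic — you carry out the Leibniz computation in a local frame with explicit connection 1-forms $A^\beta_\alpha$, while the paper phrases it invariantly in terms of the pairing $a \wedge b$ between $V$- and $V^*$-valued forms.
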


\begin{proof}
This just involves unwinding the definitions. First, some notation. Given $a \in \Omega^p(V)$ and $b \in \Omega^q(V^*)$, write $a \wedge b \in \Omega^{p+q}$ for the form produced by tensoring the wedge product on forms with the contraction $V^* \otimes V \to \R$. In this notation, the Leibniz law holds in the form
\[
\diff (a\wedge b) = (\diff_\nabla a) \wedge b \pm a \wedge (\diff_\nabla b)
\]
where the sign is determined by the degree of $a$.

Now let $\alpha\in \Lambda^+$ and write $\alpha = \phi(v)$ for $v \in V$. The push-forward $\nabla^\phi$ of $\nabla$ to $\Lambda^+$ is defined by $\nabla^\phi \alpha = \phi(\nabla v)$, whose skew-symmetrisation is $\phi  \wedge \nabla v$. On the other hand, 
\[
\diff \alpha 
\ =\ \diff (\phi(v))
\ =\ \diff ( \phi \wedge v)
\ =\ \diff_\nabla \phi \wedge v + \phi \wedge \nabla v.
\]
It follows that the torsion $\tau$ is given by $\tau(\alpha)=\diff_\nabla \phi \wedge v$. This vanishes for all $\alpha$ if and only if $\diff_\nabla \phi = 0$.
\end{proof}

We now return to perfect connections and give a proof of the fact that
when $A$ is a perfect connection in $E$, the identification $E \to \Lambda_A^+$ provided by $F_A$ matches up $A$ with the Levi-Civita connection.

\begin{proof}[Proof of Lemma \ref{perfect is LC}.]
A definite connection $A$ in an $\SO(3)$-bundle $E$  determines a metric $g_A$ on $X$. Identifying $E \cong \so(E)^*$, the curvature of $A$ gives an isomorphism $F_A \colon E \to \Lambda^+_A$, playing the role of $\phi$ above. Now Lemma \ref{torsion} and the Bianchi identity, $\diff_A F_A =0$, tell us that the push-forward of $A$ to a connection on $\Lambda^+$ is torsion-free. 

If, in addition, $A$ is perfect, then $F_A \colon E \to \Lambda^+_A$ is an isometry. Since $A$ preserves the metric on $E$ it follows that its push-forward to $\Lambda^+$ is both metric preserving and torsion free. By Proposition \ref{LC} this completely characterises the Levi-Civita connection.
\end{proof}

\subsection{Brief recap of anti-self-dual Einstein metrics}

We now pause to recall the definition of anti-self-dual metrics on 4-manifolds. (See, for example, \cite{Atiyah1978Self-duality-in} for details.) Given an oriented Riemannian 4-manifold, the Levi-Civita connection on $\Lambda^+$ has curvature $R \in \Lambda^2 \otimes \so(\Lambda^+)$. Using the metric and orientation to identify $\so(\Lambda^+) \cong (\Lambda^+)^*$ we think of $R$ as a homomorphism $R \colon \Lambda^+ \to \Lambda^2$. The splitting $\Lambda^2 = \Lambda^+ \oplus \Lambda^-$ means that $R$ splits as $R = G\oplus H$ where $G \in \End(\Lambda^+)$ and $H \in \Hom (\Lambda^+, \Lambda^-)$. It turns out that the condition $H = 0$ is equivalent to the metric being Einstein. Meanwhile, $\tr G =\frac{1}{4}\Scal$ is determined by the scalar curvature. When the trace-free part of $G$ vanishes, so that $G = \frac{1}{12}\Scal\cdot\,\text{Id}$, the metric is called \emph{anti-self-dual}. (Doing the same for the Levi-Civita connection on $\Lambda^-$ gives the definition of self-dual metrics.)  Finally, when both $G$ and $H$ vanish, i.e, when $\Lambda^+$ is flat, the metric is (locally) hyperkähler.

\subsection{Hyperkähler metrics via definite triples}

We can now show that Proposition \ref{LC} and Lemma \ref{torsion} suffice to prove the following standard fact, which is the starting point for Donaldson's Conjecture \ref{skd_conjecture}. 

\begin{proposition}[See, for example, \cite{Besse1987Einstein-manifo}]
Let $X$ be a 4-manifold with a triple of symplectic forms $\omega_i$ which satisfy $\omega_i \wedge \omega_j = \delta_{ij}\mu$ for some volume form $\mu$. Define a Riemannian metric $g$ on $X$ by setting $\Lambda^+$ to be the span of the $\omega_i$ and taking $\mu$ as the volume form. Then $g$ is hyperkähler, i.e., the Levi-Civita connection on $\Lambda^+$ is flat, with no monodromy. 
\end{proposition}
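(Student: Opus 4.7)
The plan is to apply Proposition \ref{LC} and Lemma \ref{torsion} in essentially the same way as in the proof of Lemma \ref{perfect is LC}, but with the $\SO(3)$-bundle $E$ replaced by the trivial rank-3 bundle. Let $V = X \times \R^3$ be equipped with its standard fibrewise inner product and the product (flat) connection $\nabla$. Using the standard basis $e_1, e_2, e_3$ of $\R^3$, define the $V^*$-valued 2-form $\phi = \sum_i \omega_i \otimes e_i^*$; equivalently, as a bundle map $V \to \Lambda^2$, $\phi$ sends $e_i \mapsto \omega_i$.

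Next I would verify the two key properties of the pair $(\phi, \nabla)$. First, the relations $\omega_i \wedge \omega_j = \delta_{ij}\mu$ imply that $\phi(V)$ is a definite 3-plane. By the very definition of $g$, declaring $\Lambda^+ = \langle \omega_i \rangle$ with volume form $\mu$, and using that for self-dual 2-forms $\alpha \wedge \beta = \langle \alpha, \beta\rangle_g \mu$, the $\omega_i$ form an orthonormal frame for $\Lambda^+$, so $\phi \colon V \to \Lambda^+$ is a fibrewise isometry. Second, since each $\omega_i$ is closed and $\nabla$ is trivial, $\diff_\nabla \phi = \sum_i \diff \omega_i \otimes e_i^* = 0$; hence by Lemma \ref{torsion}, the push-forward $\nabla^\phi$ on $\Lambda^+$ is torsion-free.

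Because $\phi$ is a fibrewise isometry intertwining the metric connection $\nabla$ on $V$ with $\nabla^\phi$ on $\Lambda^+$, the connection $\nabla^\phi$ is also metric. By the uniqueness statement in Proposition \ref{LC}, $\nabla^\phi$ must coincide with the Levi-Civita connection on $\Lambda^+$. But $\nabla$ is manifestly flat with trivial global holonomy, and via $\phi$ the triple $\omega_1, \omega_2, \omega_3$ is a global parallel frame for $\Lambda^+$ with respect to $\nabla^\phi$. Thus the Levi-Civita connection on $\Lambda^+$ is flat with trivial monodromy, which is the hyperk\"ahler condition.

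There is essentially no hard step here: the content is entirely packaged into Proposition \ref{LC} and Lemma \ref{torsion}, and the proof is a direct transcription of the argument for Lemma \ref{perfect is LC} with $(E, A)$ replaced by the trivial bundle with the product connection and $F_A$ replaced by $\phi$. The only thing worth being careful about is the identification of inner products --- that the algebraic normalisation $\omega_i \wedge \omega_j = \delta_{ij} \mu$ is exactly what is needed for $\phi$ to be an isometry for the induced metric on $\Lambda^+$, without which $\nabla^\phi$ would fail to be metric and the appeal to Proposition \ref{LC} would collapse.
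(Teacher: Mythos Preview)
Your proof is correct and follows essentially the same argument as the paper's own proof: both take the trivial bundle with the product connection, use closedness of the $\omega_i$ together with Lemma~\ref{torsion} to get torsion-freeness of the pushed-forward connection, use the normalisation $\omega_i\wedge\omega_j=\delta_{ij}\mu$ to see that $\phi$ is an isometry so that the pushed-forward connection is metric, and then invoke Proposition~\ref{LC} to identify it with the Levi-Civita connection.
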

\begin{proof}
To prove this, note that the forms $\omega_i$ give an identification $\phi \colon \underline{\R}^3 \to \Lambda^+$ with the trivial bundle. In the above notation, we take $\nabla$ to be the product connection on the trivial bundle $\underline{\R}^3$. The fact that the $\omega_i$ are closed translates into $\diff_\nabla \phi=0$. So, by Lemma \ref{torsion}, pushing forward the product connection gives a torsion-free connection $\nabla^\phi$ on $\Lambda^+$. Now the fact that $\omega_i \wedge \omega_j = \delta_{ij} \mu$ means that under $\phi$ the constant inner-product on $\underline{\R}^3$ is identified with the Riemannian inner-product on $\Lambda^+$. Since the constant inner-product is preserved by $\nabla$ it follows that $\nabla^\phi$ is metric preserving on $\Lambda^+$. Finally, by Proposition \ref{LC}, it follows that $\phi$ matches up with the Levi-Civita connection on $\Lambda^+$ with the product connection on $\underline{\R}^3$ and so $\Lambda^+$ is flat with a global parallel trivialisation.
\end{proof}

\subsection{Anti-self-dual Einstein metrics via definite connections}

We next deduce a similar result from Lemma \ref{perfect is LC}, namely that if $A$ is perfect then $g_A$ is anti-self-dual and Einstein with non-zero scalar curvature. Note that it follows immediately from Lemma \ref{perfect is LC} that $g_A$ is Einstein because the curvature of $E \cong \Lambda^+$ has no $\Lambda^-$-component ($A$ is a self-dual instanton with respect to $g_A$). What remains to be verified is that the metric is also anti-self-dual with non-zero scalar curvature. This will follow from the next result.

\begin{lemma}\label{metric perfect implies asdE}
Let $g$ be an Einstein metric on an oriented 4-manifold such that the Levi-Civita connection on $\Lambda^+$ is perfect. Then $g$ is also anti-self-dual and has non-zero scalar curvature.
\end{lemma}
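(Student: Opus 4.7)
The strategy is to convert both hypotheses into algebraic conditions on the curvature of the Levi--Civita connection on $\Lambda^+$ and then invoke the Bianchi identity to finish.

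First I would unpack the Einstein condition: the Riemann curvature operator $R\colon\Lambda^2\to\Lambda^2$ preserves the splitting $\Lambda^2=\Lambda^+\oplus\Lambda^-$, so its restriction to $\Lambda^+$ is a self-adjoint endomorphism $G\in\End(\Lambda^+)$ with the usual decomposition $G=W^++\tfrac{\Scal}{12}\Id$. Since $F_A=G$ when $A$ is the Levi--Civita connection, being perfect amounts to $G$ being an isometry $(\Lambda^+,g)\to(\Lambda^+_A,g_A)$. As $\Lambda^+_A$ coincides with $\Lambda^+$ inside $\Lambda^2$ and the two metrics differ only by a scalar conformal factor $\sqrt{c}$, this translates, via the relation between the wedge pairing and the inner product on self-dual forms, into $G^\top G=c\,\Id$ with respect to the $g$-metric; self-adjointness then upgrades this to $G^2=c\,\Id$. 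The eigenvalues of $G$ are therefore $\pm\sqrt{c}$, with multiplicities locally constant. Since $\Scal$ is constant on an Einstein manifold and $\tr G=\Scal/4$, the scalar $c$ is constant on each connected component as well.

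What remains is to show that all eigenvalues of $G$ coincide, i.e.\ that $W^+=0$. Differentiating the identity $G^2=c\,\Id$ yields $(\nabla G)G+G(\nabla G)=0$, so $\nabla G$ anticommutes with $G$ and must interchange its $\pm\sqrt{c}$-eigenspaces. Combining this with the second Bianchi identity, which for an Einstein metric becomes $\delta W^+=0$, I would aim to derive a contradiction with any nontrivial splitting of $\Lambda^+$ into distinct eigenbundles of $G$: the divergence-free condition should be incompatible with an orthogonal $G$-eigenspace decomposition unless one summand is zero. Once $G$ is forced to be proportional to $\Id$, the metric is anti-self-dual by definition, and $\Scal=\pm 12\sqrt{c}\neq 0$ because $c>0$ follows from the definiteness of $A$.

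The main obstacle I anticipate is precisely this last step. The purely pointwise condition $G^2=c\,\Id$ permits four eigenvalue patterns for $G$, but only the two extremal patterns $(+\sqrt c,+\sqrt c,+\sqrt c)$ and $(-\sqrt c,-\sqrt c,-\sqrt c)$ produce $W^+=0$. Ruling out the mixed patterns $(+\sqrt c,+\sqrt c,-\sqrt c)$ and $(+\sqrt c,-\sqrt c,-\sqrt c)$ is where the genuinely differential content of the hypotheses must be used, and the cleanest implementation would likely pass through a Weitzenb\"ock-type identity for $W^+$ on an Einstein 4-manifold together with the anticommutation relation above.
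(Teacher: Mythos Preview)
Your algebraic reduction is correct and matches the paper: the Einstein hypothesis gives $F_A=G\in\End(\Lambda^+)$ self-adjoint, perfection forces $G^2=c\,\Id$ with eigenvalues $\pm\sqrt{c}$, and constancy of $\Scal=4\tr G$ makes the eigenvalues constant. The gap you flag is genuine, and your proposed route through $\delta W^+=0$ and a Weitzenb\"ock identity is not how the paper closes it.

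The paper's argument is more elementary and uses the \emph{first} Bianchi identity $\diff_A F_A=0$ rather than the second. Work in a local orthonormal eigenframe $\theta_1,\theta_2,\theta_3$ of $G$ with connection $1$-forms $\alpha_1,\alpha_2,\alpha_3$, so that $F_A=\sum\lambda_i\,\theta_i\otimes\theta_i$. Since the $\lambda_i$ are constant, expanding $\diff_A F_A=0$ and using torsion-freeness gives three equations, each a sum of terms like $(\lambda_i-\lambda_j)\,\theta_j\wedge\alpha_k$. If, say, $\lambda_1=\lambda_2\neq\lambda_3$, two of these equations reduce to $\theta_3\wedge\alpha_2=0$ and $\theta_3\wedge\alpha_1=0$; since wedging with a self-dual form is an isomorphism $\Lambda^1\to\Lambda^3$, this forces $\alpha_1=\alpha_2=0$ and hence $\nabla_A\theta_3=0$. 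But a definite connection admits no parallel section: a parallel $\theta_3$ would force $F_A$ to take values in $\Lambda^2\otimes\so(2)$, which cannot span a definite $3$-plane. This contradiction rules out the mixed eigenvalue patterns directly, without any appeal to $\delta W^+$ or Weitzenb\"ock formulae.
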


\begin{proof}
Let $A$ denote the Levi-Civita connection on $\Lambda^+$. We begin by showing that the metric $g_A$ determined by the definite connection $A$ is conformal to the original metric $g$. Recall that the curvature map $F_A \colon \Lambda^+ \to \Lambda^2$ splits as 
\[
F_A =G\oplus H \colon \Lambda^+ \to \Lambda^+ \oplus \Lambda^-
\]
where $G \in \End (\Lambda^+)$ and $H \in \Hom(\Lambda^+, \Lambda^-)$. Now, since $g$ is Einstein, $H=0$. Hence $\im F_A = \Lambda^+(g)$. In other words, the self-dual 2-forms of $g$ and $g_A$ agree and so the two metrics are conformally equivalent.

Next, we show that in fact $g_A = cg$ for some constant $c$. Since $A$ is perfect, for any unit-length $\theta \in \Lambda^+$ the 4-form $G(\theta)^2 = F_A(\theta)^2$ is the volume form $\mu(A)$ of $g_A$ and so does not depend on $\theta$. But if $\theta$ is an unit eigenvector of $G$ with eigenvalue $\lambda$ then $G(\theta)^2 = \lambda^2\dvol(g)$, hence all eigenvalues of $G$ have equal square. The sum of the eigenvalues of $G$ is equal to $\frac{1}{4}\Scal(g)$ which is constant, since $g$ is Einstein. From here it follows that all the eigenvalues of $G$ are constant. This in turn implies that $\dvol(g)/\mu(A)$ is constant and hence that $g_A = cg$ for some constant $c$.

By rescaling $g$ (which does not change $A$ and hence $g_A$) we can assume $g_A= g$. With this choice of scale, $G \colon\Lambda^+ \to \Lambda^+$ is an isometry, hence all eigenvalues $\lambda_i$ satisfy $\lambda_i=\pm 1$. It remains to show that all $\lambda_i$ have the same sign.

The idea is to show that if one of the eigenvalues has a different sign, the corresponding eigenvector $\theta$ is parallel. This gives a contradiction because definite connections never admit parallel sections: such a thing would imply that the curvature of $A$ actually took values in $\Lambda^2 \otimes \so(2)$, and such a connection can never be definite.

To proceed, we let $\theta_i$ be a local frame of $\Lambda^+$ consisting of eigenvectors of~$G$:
\[
F_A = G = \sum \lambda_i\,\theta_i \otimes \theta_i
\]
The Bianchi identity says $\diff_A F_A = 0$, i.e., 
\[
\sum \lambda_i \left(
\diff \theta_i \otimes \theta_i
+
\theta_i \wedge \nabla_A \theta_i \right)
=
0
\]
since the $\lambda_i$ are all constant.

Write the Levi-Civita connection on $\Lambda^+$ in terms of the basis $\theta_i$ as
\begin{eqnarray*}
\nabla_A \theta_1 
	&=&
	\alpha_3\otimes\theta_2-\alpha_2\otimes\theta_3,\\
\nabla_A \theta_2
	&=&
	-\alpha_3\otimes\theta_1+\alpha_1\otimes\theta_3,\\
\nabla_A\theta_3
	&=&
	\alpha_2\otimes\theta_1-\alpha_1\otimes\theta_2.
\end{eqnarray*}
for 1-forms $\alpha_i$. Since $A$  is torsion free, $\diff \theta_i$ is given by the formula for $\nabla_A \theta_i$ with $\otimes$ replaced by $\wedge$. Using this gives 
\[
\sum \lambda_i
(\diff\theta_i\otimes\theta_i + \theta_i \wedge \nabla_A \theta_i) 
= 
A_1 \otimes \theta_1 
+ 
A_2 \otimes \theta_2 
+ 
A_3 \otimes \theta_3
\]
where
\begin{eqnarray*}
A_1 &=&
(\lambda_1 - \lambda_2)(\theta_2 \wedge \alpha_3) 
+ 
(\lambda_3-\lambda_1)(\theta_3 \wedge \alpha_2),\\
A_2 &=&
(\lambda_2-\lambda_3)(\theta_3\wedge\alpha_1)
+
(\lambda_1-\lambda_2)(\theta_1\wedge\alpha_3),\\
A_3 &=&
(\lambda_3 - \lambda_1)(\theta_1\wedge\alpha_2)
+
(\lambda_2 - \lambda_3)(\theta_2 \wedge\alpha_1).
\end{eqnarray*}
The Bianchi identity implies that all the $A_i$ vanish. 

Suppose now, for a contradiction, that only two of the eigenvalues have the same sign; say $\lambda_1=\lambda_2$, but  $\lambda_1 \neq \lambda_3$. Then $A_1 = 0$ implies that $\theta_3 \wedge \alpha_2=0$ and so $\alpha_2 = 0$ (as wedging with $\theta_i$ is an isomorphism $\Lambda^1 \to \Lambda^3$). Similarly, $A_2=0$ implies that $\alpha_1=0$. This means that $\nabla_A \theta_3 = 0$ thus giving a contradiction. 
\end{proof}

We can now prove, as promised in the introduction, that perfect connections yield anti-self-dual Einstein metrics with non-zero scalar curvature.

\begin{proof}[Proof of Theorem \ref{perfect implies asdE}]
Given a perfect connection $A$ in an $\SO(3)$-bundle $E \to X$, the curvature $F_A$ identifies $E \cong \Lambda^+_A$. It follows from Proposition \ref{LC} and Lemma \ref{torsion} that under this identification, $A$ matches up with the Levi-Civita connection $\nabla$ of $g_A$ on $\Lambda^+_A$. Since the curvature of $\nabla$ has no $\Lambda^-$ component, $g$ is Einstein. The result now follows from Lemma \ref{metric perfect implies asdE}.
\end{proof}

\section{Ellipticity and definite connections}\label{elliptic}

The goal in this section is to show that perfect connections are the zeros of a non-linear differential operator defined on the space of definite connections which is \emph{elliptic modulo gauge}.

Recall from Definition \ref{Q} that to each definite connection $A$ we associate a symmetric bilinear form $Q(A)$ on $E$, given by pulling back the Riemannian inner-product on $\Lambda^+_A$ via the isomorphism $F_A \colon E \to \Lambda^+_A$. Explicitly,
\[
Q(A)(u,v) = \frac{F_A(u) \wedge F_A(v)}{\mu_A} =(F_A(u), F_A(v)), 
\]
where $\mu_A = \frac{8\pi^2}{3}p_1(A)$ is the volume form of $g_A$.
So $A$ is perfect precisely when $Q(A)$ equals the original fibrewise metric in $E$. We will show that
\[
Q \colon \D \to \Omega^0(X,S^2E)
\]
is elliptic modulo gauge.

Linearising at the point $A\in \D$ gives a first-order operator
\[
\delta_A Q \colon \Omega^1(X, \so(E)) \to \Omega^0(X,S^2_0E).
\]
(Note $\tr Q(A) = 3$ is constant, so the linearisation takes values in trace-free endomorphisms $S^2_0E$.) This is obviously not elliptic for the simple reason that it is map between sections of bundles of different ranks. This failure of ellipticity can also be seen in terms of gauge transformations. The gauge group $\G$ is the group of all bundle isometries $E \to E$ (not necessarily covering the identity on $X$). It acts on both connections and sections of $S^2_0E$ by pull-back. The map $Q$ is $\G$-equivariant and this prevents $\delta_A Q$ from being elliptic. Put briefly, the infinitesimal action of $\G$ on $\Omega^0(X,S^2_0E)$ is an algebraic map, 
\[
T \colon \Lie(\G) \to \Omega^0(X,S^2_0E),
\]
so of zeroth order when thought of as a differential operator. Writing $R_A \colon \Lie(\G) \to T_A\D$ for the infinitesimal action of $\G$ at $A$, we see that $\delta_A Q \circ R_A = T$. Now $\delta_A Q \circ R_A$ is a~priori at least first order (since $\delta_A Q$ is), so the symbol of $\delta_A Q \circ R_A$ must vanish. In other words, the symbol of $\delta_A Q$ must vanish on the image of the symbol of $R_A$. 

We will show that this is the only way in which $\delta_A Q$ fails to be elliptic. Working orthogonally to the $\G$-orbits in $\D$ corrects for this failure and fixing the gauge in this way yields an elliptic linear operator, as is familiar from the theory of instantons. In the special case when $A$ is perfect, the corresponding elliptic operator can be identified with a certain Dirac operator of the metric $g_A$; see \S\ref{dirac operator}. In \S\ref{index} we exploit this to compute the index of the gauge-fixed operator.

First, however, a reminder about our notation. Throughout, we freely identify $E \cong E^* \cong \so(E) \cong \so(E)^*$. So, for example, the tangent space $T_A\D = \Omega^1(X, \so(E))$ is identified with $\Omega^1(X,E)$.

\subsection{The linearisation of $Q$}

We begin by describing the linearisation of $A \mapsto Q(A)$. To do this we introduce some notation. There is a map 
\[
(\Lambda^2 \otimes E) \otimes (\Lambda^2 \otimes E) \to \Lambda^4\otimes S^2E,
\]
given by tensoring the wedge product on forms with the natural projection $E\otimes E \to S^2E$. We write this map $S \otimes T \mapsto S \cdot T$. In this notation, $Q(A) =(F_A \cdot F_A)/ \mu(A)$.

Using this map, we next define a bundle homomorphism
\begin{equation}\label{L}
L_A \colon \Lambda^2 \otimes E \to S^2E, \quad\quad
L_A(T)
= 
\frac{F_A \cdot T}{\mu(A)}.
\end{equation}
So, in this notation, $Q(A) = L_A(F_A)$. Explicitly, if $e_1, e_2, e_3$ is a local orthonormal frame for $E$ in which $F_A = \sum F_i \otimes e_i$ and $T = \sum T_i \otimes e_i \in \Lambda^2 \otimes E$, then $L_A(T)$ is represented by the symmetric matrix with $(i,j)$-entry 
\[
L_A(T)_{ij} = \frac{T_i\wedge F_j + T_j \wedge F_i}{2\mu(A)}.
\]
Because the forms $F_i$ are, by definition of $g_A$, self-dual, we can write this as
\[
L_A(T)_{ij} = \frac{1}{2}(T_i, F_j) + \frac{1}{2}(T_j, F_i)
\]
where $(\cdot, \cdot)$ is the Riemannian inner-product of $g_A$. Notice that $\tr(L_A(T)) = (F_A, T)$.

With this in hand we have:
\begin{lemma}\label{linearisation of Q}
Let $A \in \D$ and $a \in T_A\D = \Omega^1(X,E)$. The linearisation of $Q$ at $A$ in the direction $a$ is given by:
\begin{equation}\label{delta Q}
\delta_A Q(a) = 2 L_A(\diff_Aa) - \frac{2}{3} (F_A, \diff_A a) Q(A).
\end{equation}
(Here $(\cdot, \cdot)$ is the inner-product on $\Lambda^2 \otimes E$ determined by the given one on $E$ and $g_A$ on $\Lambda^2$). In particular, $\delta_A Q$ takes values in trace-free endomorphisms $S^2_0E$. 
\end{lemma}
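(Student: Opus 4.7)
The strategy is a direct linearisation of the rational expression $Q(A)=(F_A\cdot F_A)/\mu(A)$, using the quotient rule, and then a check that the resulting section is trace-free.

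First I would set $A_t=A+ta$ with $a\in\Omega^1(X,E)$ and recall the standard identity $\tfrac{d}{dt}\big|_{t=0}F_{A_t}=\diff_A a\in\Omega^2(X,E)$. From the definition of the bilinear operation ``$\cdot$'', the Leibniz rule immediately gives
\[
\frac{d}{dt}\Big|_{t=0}(F_{A_t}\cdot F_{A_t})=2\,F_A\cdot \diff_A a.
\]
This handles the numerator.

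For the denominator I would use the key identity $3\mu(A)=\tr_E(F_A\cdot F_A)$, which follows on choosing a local orthonormal frame $e_i$ and writing $F_A=\sum F_i\otimes e_i$, so that $\tr_E(F_A\cdot F_A)=\sum F_i\wedge F_i=3\mu(A)$ by Definition \ref{definition metric from connection}. Linearising, $\tfrac{d}{dt}\big|_{t=0}\,3\mu(A_t)=2\,\tr_E(F_A\cdot \diff_A a)=2\sum F_i\wedge (\diff_A a)_i$. Since the $F_i$ are self-dual for $g_A$, each wedge picks up only the self-dual part of $(\diff_A a)_i$, and the pointwise pairing $(F_i,(\diff_A a)_i)_{g_A}\,\mu(A)=F_i\wedge (\diff_A a)_i$ yields
\[
\delta_A\mu(a)=\tfrac{2}{3}(F_A,\diff_A a)\,\mu(A),
\]
where $(\cdot,\cdot)$ denotes the inner product on $\Lambda^2\otimes E$ induced by $g_A$ and the fibre metric on $E$.

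Applying the quotient rule to $Q(A)=(F_A\cdot F_A)/\mu(A)$ and substituting these two linearisations produces exactly
\[
\delta_A Q(a)=\frac{2\,F_A\cdot \diff_A a}{\mu(A)}-\frac{F_A\cdot F_A}{\mu(A)^2}\,\delta_A\mu(a)=2L_A(\diff_A a)-\tfrac{2}{3}(F_A,\diff_A a)\,Q(A),
\]
using the definition \eqref{L} of $L_A$. Finally, to check that $\delta_AQ$ takes values in $S^2_0E$, I would note that the identity $\tr L_A(T)=(F_A,T)$ (recorded just before the lemma) together with $\tr Q(A)=3$ gives $\tr\delta_AQ(a)=2(F_A,\diff_A a)-2(F_A,\diff_A a)=0$, consistent with $\tr Q(A)\equiv3$ being constant.

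There is no real obstacle: the entire argument is bookkeeping around the identity $3\mu(A)=\tr_E(F_A\cdot F_A)$ and the self-duality of the $F_i$, which together convert $\delta_A\mu$ into the inner-product expression appearing in \eqref{delta Q}. The only point that requires a moment's care is checking that $(F_A,\diff_A a)$ is unambiguous, i.e.\ that the inner product vanishes on the $\Lambda^-_A$-part of $\diff_A a$, which is automatic since $F_A$ is $g_A$-self-dual by construction.
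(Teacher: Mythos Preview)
Your proposal is correct and essentially identical to the paper's proof: both compute $\delta_A\mu(a)=\tfrac{2}{3}(F_A,\diff_A a)\mu(A)$ from $3\mu(A)=\tr(F_A\cdot F_A)$ and then linearise $Q(A)\mu(A)=F_A\cdot F_A$ (you phrase this as the quotient rule, the paper as the product rule, but these are the same computation). The trace-free check is also the same.
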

\begin{proof}
Differentiating the equation $\mu(A)= \frac{1}{3} \tr (F_A^2)$ along $a$ gives
\begin{equation}\label{delta mu}
\delta_A\mu(a)
=
\frac{2}{3} \tr \left(F_A \wedge \diff_Aa \right)
=
\frac{2}{3} \tr \left[L_A(\diff_Aa)\right]\mu(A).
\end{equation}
Since, as remarked above, $\tr (L_AT)= (F_A, T)$ we can write this as 
\[
\delta_A\mu(a) = \frac{2}{3} (F_A, \diff_A a) \mu(A).
\]
Next, differentiating $Q(A)\mu(A)= F_A \cdot F_A$ gives
\[
\delta_A Q(a)\,\mu(A) + \frac{2}{3} Q(A) (F_A, \diff_A a)\mu(A)
=
2 F_A \cdot \diff_A a  
\]
from which the equation for $\delta_AQ(a)$ follows. 

The formula implies $\tr \delta_AQ(a) = 0$, which it must since $\tr Q =3$ is constant.
\end{proof}

\subsection{The linearised action} \label{linearised action}

Let $\G$ denote the gauge group of all bundle isometries $E \to E$. 
It acts by pull-back on the space of metric connections in $E$, preserving the space of definite connections. Differentiating this action at a connection $A$ gives a map $R_A \colon \Lie(\G) \to \Omega^1(X, E)$. To describe $R_A$, we first consider the subgroup $\G_0$ of gauge transformations covering the identity on $X$ (the usual gauge group in Yang--Mills theory). It has Lie algebra $\Lie (\G_0) = \Omega^0(X, E)$ and here $R_A$ is given by the familiar formula: $R_A(\xi) = -\diff_A \xi$. (Recall we implicitly identify $\so(E) \cong E$ throughout.)

Next, we use the connection $A$ to determine a vector-space complement to $\Lie(\G_0) \subset \Lie (\G)$ by horizontally lifting vector fields on $X$ to $E$. This gives
\[
\Lie(\G) = \Lie(\G_0) \oplus \Hor_A \cong \Lie(\G_0) \oplus C^\infty(TX)
\]
where $\Hor_A \cong C^\infty(TX)$ are the horizontal lifts to $E$ of vector fields on $X$. Of course, $\Hor_A$ is not a Lie subalgebra precisely because $A$ has curvature. 

\begin{lemma}\label{linear action of vector}
Given $u \in C^\infty(TX)$ the infinitesimal action at $A$ of its $A$-horizontal lift is $R_A(u) = - \iota_u F_A$.
\end{lemma}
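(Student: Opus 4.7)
The plan is to view $A$ as a connection 1-form on the principal $\SO(3)$-bundle $P \to X$ underlying $E$ and to compute the Lie derivative of $A$ along the horizontal lift, via Cartan's magic formula. Under this translation, an element of $\Lie(\G)$ corresponds to an $\SO(3)$-invariant vector field on $P$, and the infinitesimal action on connections is (up to a sign fixed by convention) the Lie derivative.

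First I would calibrate the sign from the case of $\G_0$. For $\xi \in \Omega^0(X, \so(E))$, the fundamental vertical vector field $X_\xi$ on $P$ satisfies $A(X_\xi) = \xi$, and a direct application of Cartan's formula together with $dA = F_A - \tfrac{1}{2}[A,A]$ gives $\mathcal{L}_{X_\xi} A = d\xi + [A, \xi] = \diff_A \xi$. Since the stated formula is $R_A(\xi) = - \diff_A \xi$, the convention in use must be that $\G$ acts on connections by pushforward, so that $R_A = -\mathcal{L}$ in our bundle calculations.

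Now let $\tilde{u}$ denote the $A$-horizontal lift of $u$ to $P$. By construction $A(\tilde{u}) = 0$, so
\[
\mathcal{L}_{\tilde{u}} A \;=\; \iota_{\tilde{u}} \diff A + \diff \bigl(A(\tilde{u})\bigr) \;=\; \iota_{\tilde{u}} \diff A.
\]
Substituting $\diff A = F_A - \tfrac{1}{2}[A,A]$ and using $\iota_{\tilde{u}}[A,A] = 2[A(\tilde{u}), A] = 0$ collapses this to $\mathcal{L}_{\tilde{u}} A = \iota_{\tilde{u}} F_A$. Because $F_A$ is horizontal and $\SO(3)$-equivariant, the 1-form $\iota_{\tilde{u}} F_A$ on $P$ descends to the element $\iota_u F_A \in \Omega^1(X, \so(E))$. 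Applying the sign convention from the previous paragraph gives $R_A(u) = -\iota_u F_A$, as claimed.

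The calculation is essentially bookkeeping; the only subtlety is consistency of sign conventions between the $\G_0$-action (for which the formula $-\diff_A \xi$ is already fixed in the paper) and the horizontal part, which is what motivates extracting the sign from the vertical case first. A more geometric sanity check is that the flow of $\tilde{u}$ is ``parallel transport along $u$'', so $\mathcal{L}_{\tilde u} A$ on a vector $v$ measures the failure of horizontal transports in the directions $u$ and $v$ to commute, which is precisely $F_A(u,v)$.
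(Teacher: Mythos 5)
Your proof is correct and takes essentially the same route as the paper: passing to the connection 1-form on the principal bundle, applying Cartan's formula, and using $A(\tilde u)=0$ together with $\iota_{\tilde u}[A\wedge A]=0$ to reduce $\iota_{\tilde u}\diff A$ to $\iota_{\tilde u}F_A$. The paper simply asserts $R_A(u)=-L_u(A)$ for all $u\in\Lie(\G)$ rather than calibrating the sign from the vertical case, but your extra check is consistent with that convention and adds nothing new in substance.
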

\begin{proof}
We switch to the principal bundle formalism. Let $P \to X$ be the principal frame bundle of $E$. A connection $A$ is an $\SO(3)$-equivariant 1-form on $P$ with values in $\so(3)$ whilst $\Lie(\G)$ is the Lie algebra of $\SO(3)$-invariant vector fields on $P$. Given \emph{any} element $u \in \Lie (\G)$, the corresponding infinitesimal action on $A$ is $R_A(u) = - L_u(A) = -\diff(A(u)) - \iota_u \diff A$. 

With respect to the splitting of $\Lie(\G)$ determined by $A$, $\Hor_A$ is precisely those $u$ with $A(u) =0$. For such vectors, $\iota_u[A\wedge A] = 2[A(u), A] = 0$. It follows that $\iota_u \diff A = \iota_u F_A$, since $F_A = \diff A + \frac{1}{2}[A\wedge A]$.
\end{proof}

So, given $A$ we have an isomorphism $\Lie(\G) \cong \Omega^0(X, E) \oplus C^\infty(TX)$ with respect to which the infinitesimal action at $A$ is given by 
\begin{equation}\label{infinitesimal action}
R_A(\xi, u) = -\diff_A \xi - \iota_u F_A.
\end{equation}

Next we consider the action of $\G$ by pull-back on $\Omega^0(X, S^2_0E)$. Write the infinitesimal action at a section $B \in \Omega^0(X, S^2_0E)$ as $T_B \colon \Lie(\G) \to \Omega^0(X, S^2_0E)$. Given a connection $A$, we write elements of the Lie algebra as $(\xi,u)$ as above. Then
\begin{equation}\label{infinitesimal G equivariance}
T_B (\xi,u) = [B, \xi] - \nabla^A_u B.
\end{equation}
The map $Q$ is $\G$ equivariant, so
\[
\delta_A Q \circ R_A = T_{Q(A)}.
\]
In other words, $\delta_A Q(\diff_A \xi) = [\xi, Q(A)]$ and $\delta_A Q(\iota_u F_A) = \nabla^A_u Q(A)$.

\subsection{The gauge-fixed elliptic operator}

We now explain how to take account of gauge to produce a linear elliptic operator out of $\delta_A Q$. Given a definite connection $A$, we denote the symbol of the linearisation $\delta_A Q \colon \Omega^1(X, E) \to \Omega^0(X,S^2_0E)$ at $A$ in the direction $\alpha \in \Lambda^1$ by
\[
\sigma_A(\alpha) \colon \Lambda^1 \otimes E
\to
S^2_0E.
\]
It follows from the description of $\delta_A Q$ given in Lemma \ref{linearisation of Q} that 
\begin{equation}\label{formula for symbol}
\sigma_A (\alpha)(a) 
=
2L_A(\alpha \wedge a) - \frac{2}{3} \left(\tr L_A(\alpha \wedge a)\right)Q(A).
\end{equation}

We first prove that $\sigma_A(\alpha)$ is surjective. We begin with a more explicit description of the operator $L_A$. Let $e_1, e_2, e_3$ denote an orthonormal frame of $E$. Writing $F_A = \sum F_i \otimes e_i$, the 2-forms $F_i$ span $\Lambda^+_A$. Hence a self-dual 2-form with values in $E$ can be written as $T = \sum_{ij} T_{ij} F_j \otimes e_i$. In the following lemma we will identify self-dual 2-forms $T$ with 3-by-3 matrices $(T_{ij})$ in this fashion.

\begin{lemma}\label{matrix form of L}
\strut
\begin{itemize}
\item
Given a definite connection $A$, the map $L_A \colon \Lambda^2 \otimes E \to S^2E$ vanishes on $\Lambda^-_A\otimes E$. 
\item
Let $T = \sum T_{ij}F_j \otimes e_i \in \Lambda^+_A \otimes E$. With respect to the basis $e_1, e_2, e_3$, $L_A(T)$ is represented by the matrix $\frac{1}{2}\left(TQ(A) + Q(A)T^t\right)$. In particular, $L_A \colon \Lambda^+_A \otimes E \to S^2E$ is surjective.
\end{itemize}
\end{lemma}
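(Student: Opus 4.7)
The plan has two essentially separate parts, matching the two bullet points. For the first bullet, note from the formula
\[
L_A(T)_{ij} = \frac{T_i\wedge F_j + T_j \wedge F_i}{2\mu(A)}
\]
derived just before Lemma~\ref{matrix form of L}, that the statement reduces to a pointwise claim in linear algebra. The 2-forms $F_j$ span $\Lambda^+_A$ by definition of $g_A$, so if $T = \sum T_i \otimes e_i$ has every component $T_i \in \Lambda^-_A$, then each wedge $T_i \wedge F_j$ vanishes because $\Lambda^+_A$ and $\Lambda^-_A$ are orthogonal under the wedge pairing. Hence $L_A(T) = 0$ on $\Lambda^-_A \otimes E$.

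For the second bullet, I would substitute $T = \sum_{ij} T_{ij} F_j \otimes e_i$, so that $T_i = \sum_k T_{ik} F_k$, into the same formula. Rewriting in terms of the $g_A$-inner-product (as in the sentence just before Lemma~\ref{matrix form of L}), and using that by definition of $Q(A)$,
\[
(F_k, F_j) = \frac{F_k \wedge F_j}{\mu(A)} = Q(A)_{kj},
\]
a short computation gives
\[
L_A(T)_{ij} = \tfrac{1}{2}\sum_k T_{ik} Q(A)_{kj} + \tfrac{1}{2}\sum_k T_{jk} Q(A)_{ki}.
\]
Since $Q(A)$ is symmetric, the second sum is $(Q(A) T^t)_{ij}$ and the first is $(TQ(A))_{ij}$, giving the claimed matrix formula.

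For surjectivity of $L_A|_{\Lambda^+_A \otimes E}$, the observation is that $Q(A)$ is positive definite (it is the pullback of the Riemannian inner-product on $\Lambda^+_A$ along an isomorphism). Given any $B \in S^2 E$, I would restrict attention to symmetric $T$, so that the equation $L_A(T)=B$ becomes the Sylvester equation
\[
Q(A)\,T + T\,Q(A) = 2B.
\]
Diagonalising $Q(A)$, with positive eigenvalues $q_1, q_2, q_3$, this is solved entrywise by $T_{ij} = 2B_{ij}/(q_i + q_j)$, proving both solvability and uniqueness within symmetric matrices. No step here is really an obstacle; the only thing worth being careful about is keeping the identifications $E \cong \so(E) \cong \Lambda^+_A$ consistent so that ``$T$ symmetric'' makes sense as a statement about $\Lambda^+_A \otimes E \cong E \otimes E$.
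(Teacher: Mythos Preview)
Your proof is correct and the first two parts match the paper almost verbatim. The only difference is in the surjectivity argument: rather than restricting to symmetric $T$ and solving the Sylvester equation $Q(A)T + TQ(A) = 2B$ by diagonalisation, the paper simply takes $T = MQ(A)^{-1}$ (not necessarily symmetric) and checks directly that $\tfrac{1}{2}(TQ(A) + Q(A)T^t) = \tfrac{1}{2}(M + M) = M$. This is a one-line verification using only that $Q(A)$ is invertible and symmetric, avoiding the diagonalisation step; your route additionally exhibits a \emph{symmetric} preimage, which is not needed here but does no harm.
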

\begin{proof}
Since $L_A$ involves taking the wedge product with self-dual 2-forms it vanishes on elements of $\Lambda^-_A \otimes E$. Next note that by definition, for $T$ as in the statement, $L_A(T)$ corresponds to the matrix with $(p,q)$-element
\[
\frac{1}{2}\sum_j T_{pj}(F_j, F_q)
+
\frac{1}{2} \sum_j T_{qj}(F_p, F_j)
\]
which is the $(p,q)$-element of $\frac{1}{2}\left(TQ(A)+Q(A)T^t\right)$ as claimed. Finally, to prove surjectivity, given a symmetric matrix $M$, let $T = MQ(A)^{-1}$; then $L_A(T)$ is represented by the matrix $M$.
\end{proof}

\begin{corollary}\label{symbol is surjective}
For $\alpha \neq 0$, the symbol $\sigma_A(\alpha)$ is surjective.
\end{corollary}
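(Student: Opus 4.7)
The plan is to factor the symbol through three intermediate maps and verify each is surjective. Writing $M_\alpha \colon \Lambda^1 \otimes E \to \Lambda^2 \otimes E$ for wedging with $\alpha$ in the first factor, and $\psi \colon S^2 E \to S^2_0 E$ for the trace-removing map $\psi(B) = 2B - \frac{2}{3}(\tr B)\,Q(A)$, the formula \eqref{formula for symbol} for the symbol reads
\[
\sigma_A(\alpha) \;=\; \psi \circ L_A \circ M_\alpha.
\]
(The computation $\tr \psi(B) = 2\tr B - \frac{2}{3}(\tr B)(\tr Q(A)) = 0$, using $\tr Q(A) = 3$, confirms that $\psi$ lands in $S^2_0 E$.) It suffices to show each of the three factors is surjective onto the indicated target.

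The map $\psi$ is surjective because for any $M \in S^2_0 E$, the element $B = \tfrac{1}{2}M$ is traceless and satisfies $\psi(B) = M$. Surjectivity of $L_A$, restricted to $\Lambda^+_A \otimes E$ and mapping onto all of $S^2 E$, is exactly the content of the second bullet of Lemma \ref{matrix form of L}: given a symmetric matrix $M$, one takes $T = MQ(A)^{-1}$, which is well defined since $Q(A)$ is positive definite (as $A$ is definite and $Q(A)$ is the pull-back of the metric on $\Lambda^+_A$). Combining these two steps, $\psi \circ L_A$ already maps $\Lambda^+_A \otimes E$ onto $S^2_0 E$.

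The remaining step — and the only one that uses $\alpha \neq 0$ — is to show that the composition of $M_\alpha$ with the projection $\Lambda^2 \otimes E \to \Lambda^+_A \otimes E$ (in the $\Lambda^2$-factor) is surjective. Since $L_A$ vanishes on $\Lambda^-_A \otimes E$ by the first bullet of Lemma \ref{matrix form of L}, it is enough to prove the classical linear-algebra fact that, for a nonzero covector $\alpha$ on an oriented Euclidean 4-space, the map
\[
p_\alpha \colon \Lambda^1 \longrightarrow \Lambda^+_A, \qquad \beta \longmapsto (\alpha \wedge \beta)^+
\]
is surjective. The kernel of $\beta \mapsto \alpha \wedge \beta$ consists of multiples of $\alpha$, so the image in $\Lambda^2$ is a 3-plane; a direct computation in an orthonormal basis with $\alpha = e^1$ shows that the self-dual projections of $e^{12},e^{13},e^{14}$ span $\Lambda^+_A$, hence $p_\alpha$ is surjective. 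Tensoring with $E$ gives surjectivity of the first factor, and composing the three steps completes the proof.

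The argument has no real obstacle; the only thing worth checking carefully is that the factor of $\frac{2}{3} Q(A)$ appearing in \eqref{formula for symbol} does not obstruct surjectivity, which is handled by the elementary observation about $\psi$ above. Thus the symbol is surjective for every $\alpha \neq 0$, so $\delta_A Q$ is overdetermined elliptic in the gauge-theoretic sense, consistent with the need to cut down by gauge-fixing to obtain an elliptic operator.
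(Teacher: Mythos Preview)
Your proof is correct and follows essentially the same route as the paper: both use the standard surjectivity of $\beta \mapsto (\alpha \wedge \beta)^+$ for $\alpha \neq 0$, invoke Lemma~\ref{matrix form of L} for surjectivity of $L_A$ on $\Lambda^+_A \otimes E$, and then observe that the $Q(A)$-correction term is harmless because one may hit any trace-free target. The only difference is cosmetic---the paper chooses a trace-free $M$ at the outset and finds a single preimage, whereas you formally factor through the map $\psi$---but the content is the same.
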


\begin{proof}
Recall the standard fact that the map $\Lambda^1 \to \Lambda^+$ given by $\beta \mapsto (\alpha \wedge \beta)^+$ is a surjective homomorphism (this is the key to ellipticity in the study of instantons). It now follows from Lemma \ref{matrix form of L}  that the map $\Lambda^1 \otimes E \to S^2E$ given by $a \mapsto L_A((\alpha\wedge a)^+)$ is surjective. Finally, given $M \in S^2_0E$, let $a \in \Lambda^1\otimes E$ satisfy $2L_A(\alpha \otimes a) = M$. Then $\sigma_A(\alpha)(a) = M - \frac{1}{3}(\tr M) Q(A) = M$.
\end{proof}

We next explain how $\sigma_A$ fits into a short exact sequence. Write $S_A \colon TX \to \Lambda^1\otimes E$ for the homomorphism $S_A(u) = \iota_u F_A$, which gives minus the infinitesimal action at $A$ of (the $A$-horizontal lift of) $u$. Meanwhile, minus the infinitesimal action at $A$ of an endomorphism $\xi \in \Omega^0(X, \so(E)) \cong \Omega^0(X, E)$ is $\diff_A\xi$. The symbol of this operator in the direction $\alpha$ is wedge product with $\alpha$, which we denote by $w_\alpha \colon E \to \Lambda^1 \otimes E$.

\begin{lemma}\label{exact sequence}
Let $A$ be a definite connection. Then for any non-zero $\alpha \in \Lambda^1$, the following is a short exact sequence:
\begin{equation}\label{symbol exact sequence}  
\xymatrixcolsep{2.7pc}
\xymatrix{
0\ar[r]
&
TX \oplus E
\ar[r]^-{S_A + w_\alpha}
&
\Lambda^1 \otimes E
\ar[r]^-{\sigma_A(\alpha)}
&
S^2_0E
\ar[r]
&
0}
\end{equation}
\end{lemma}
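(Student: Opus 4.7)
My plan to prove that (\ref{symbol exact sequence}) is exact combines a rank count with two verifications; the remaining ingredient, surjectivity of $\sigma_A(\alpha)$, is already in hand from Corollary~\ref{symbol is surjective}. The vector bundles in the sequence have ranks $7$, $12$ and $5$, and $7+5=12$, so once I know $S_A+w_\alpha$ is injective and $\sigma_A(\alpha)\circ(S_A+w_\alpha)=0$, the ranks force the image of the first map to fill out the kernel of the second, and the sequence is exact.

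For injectivity, suppose $(S_A+w_\alpha)(u,\xi)=\iota_u F_A+\alpha\otimes\xi=0$. Viewing both summands as homomorphisms $E\to\Lambda^1$, the first factors as $E\xrightarrow{F_A}\Lambda^+_A\xrightarrow{\iota_u}\Lambda^1$. Since $A$ is definite, $F_A$ is an isomorphism onto $\Lambda^+_A$, and the standard 4D linear algebra fact (immediate in an orthonormal coframe adapted to $u$) gives that $\iota_u\colon\Lambda^+_A\to\Lambda^1$ is injective whenever $u\neq 0$. So if $u\neq 0$ then $\iota_u F_A$ has rank $3$, whereas $\alpha\otimes\xi$ has rank at most~$1$: contradiction. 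Hence $u=0$, and then $\alpha\otimes\xi=0$ with $\alpha\neq 0$ forces $\xi=0$.

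For the composition, the $w_\alpha$-contribution vanishes immediately because $\sigma_A(\alpha)(\alpha\otimes\xi)$ involves $L_A(\alpha\wedge(\alpha\otimes\xi))=0$. The substantive content is that $\sigma_A(\alpha)(\iota_u F_A)=0$ for every $u$, and this is what I would deduce from the $\G$-equivariance of $Q$. Specialising the identity $\delta_A Q\circ R_A=T_{Q(A)}$ from~\S\ref{linearised action} to the horizontal summand (i.e.\ $\xi=0$) gives
\[
\delta_A Q(\iota_u F_A)\;=\;\nabla^A_u Q(A).
\]
Now view each side as a differential operator in the variable $u\in\Gamma(TX)$. The right-hand side is pointwise linear in $u$, hence of order zero. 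The left-hand side is the composition of the algebraic assignment $u\mapsto\iota_u F_A$ (order zero) with the first-order operator $\delta_A Q$, so is of order one; its principal symbol in direction $\alpha$ is precisely $\sigma_A(\alpha)(\iota_u F_A)$. Matching orders forces this symbol to vanish.

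The step I expect to require the most care to present cleanly is this last symbol-chase, since one must be careful about what it means to take a principal symbol of a composition in which the inner map is algebraic in the variable. If one prefers a completely hands-on route, the 4D identity $\alpha\wedge\iota_u F_A=(\iota_u\alpha)F_A-\iota_u(\alpha\wedge F_A)$ combined with the definition of $L_A$ reduces the problem to showing $L_A(\iota_u(\alpha\wedge F_A))=\tfrac{1}{2}(\iota_u\alpha)Q(A)$, a short wedge-product computation in 4D. One then gets $L_A(\alpha\wedge\iota_u F_A)=\tfrac{1}{2}(\iota_u\alpha)Q(A)$, a scalar multiple of $Q(A)$, and the trace-correction in the definition of $\sigma_A(\alpha)$ cancels it: $\sigma_A(\alpha)(\iota_u F_A)=(\iota_u\alpha)Q(A)-\tfrac{2}{3}\cdot\tfrac{3}{2}(\iota_u\alpha)Q(A)=0$.
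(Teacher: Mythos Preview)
Your proof is correct and follows essentially the same route as the paper's. Both arguments combine surjectivity of $\sigma_A(\alpha)$ (already established), injectivity of $S_A+w_\alpha$, and the vanishing of the composite via $\G$-equivariance, with a dimension count closing the argument. Your injectivity argument, phrased via a rank comparison ($3$ versus at most $1$), is a slightly cleaner packaging of the paper's observation that the $\iota_u F_i$ span a $3$-plane when $u\neq 0$; and your hands-on alternative for $\sigma_A(\alpha)\circ S_A=0$ is exactly the direct computation the paper carries out later (Lemma~\ref{4d lemma} and Corollary~\ref{L circ S}, cf.\ Remark~\ref{direct calc}).
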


\begin{proof}
We first prove that $\sigma_A(\alpha) \circ (S_A + w_\alpha) = 0$. Recall equation (\ref{infinitesimal G equivariance}) which says that $\G$-equivariance of $Q$ implies that for any $\xi \in \Omega^0(X, E)$,
\[
\delta_AQ (\diff_A \xi)
=
[\xi, Q(A)]
\]
This is zeroth order in $\xi$ and so the symbol of $\delta_A Q\circ \diff _A$ (a~priori second order) must vanish. In other words, $\sigma_A(\alpha) \circ w_\alpha = 0$. (This is also immediate from the formula for $\sigma_A(\alpha)$).

Similarly, for any $u \in \Omega^0(X, TX)$, the $\G$-equivariance of $Q$ implies that
\[
\delta_A Q(S_A(u))
=
\nabla^A_u Q(A).
\]
Again, this is zeroth order in $u$ and so the symbol of $\delta _AQ \circ S_A$ (a~priori first order) must vanish. In other words, $\sigma(\alpha) \circ S_A = 0$. (This can also be proved directly from the formula for $\sigma_A(\alpha)$ with a little more work; see Remark \ref{direct calc}.)

We have already proved in Lemma \ref{symbol is surjective} that $\sigma_A(\alpha)$ is surjective, so it just remains to prove that $S_A + w_\alpha$ is injective. Given a basis $e_1,e_2,e_3$ for $E$, write $F_A = \sum F_i \otimes e_i$. Let $u \in TX$ and $x = \sum x_ie_i \in E$. Then $(S_A+w_\alpha)(u ,x) = \sum (\iota_uF_i +x_i \alpha)\otimes e_i$ vanishes if and only if $\iota_uF_i +x_i \alpha = 0$ for each $i$. In other words, the 1-forms $\iota_u F_i$ are all proportional. On the other hand, since the $F_i$ span a definite 3-plane in $\Lambda^2$, if $u \neq 0$ the 1-forms $\iota_uF_i$ span a 3-plane in $\Lambda^1$. So $(S_A+w_\alpha)(u,x) = 0$ forces $u=0=x$.
\end{proof}

We can now gauge-fix $\delta_A Q$ to produce an elliptic operator. Let $V_A \subset \Lambda^1 \otimes E$ denote the rank 8 sub-bundle which is orthogonal to the image of $S_A \colon TX \to \Lambda^1 \otimes E$. Write $W = S^2_0E \oplus E$. 

\begin{proposition}
Given a definite connection $A$, the operator
\[
D_A = \delta_A Q \oplus \diff_A^* 
\colon 
\Omega^0(X, V_A ) \to \Omega^0(X, W)
\]
is elliptic. (Here $\diff_A^*$ denotes the $L^2$-adjoint of $\diff_A$ defined via the Riemannian metric $g_A$.)
\end{proposition}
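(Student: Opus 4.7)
The plan is to verify ellipticity by computing the principal symbol of $D_A$ at a nonzero covector $\alpha \in \Lambda^1$ and showing that it is injective as a map $V_A \to W = S^2_0 E \oplus E$. A rank count then promotes injectivity to an isomorphism: $V_A$ has rank $3\cdot 4 - 4 = 8$ (using that $S_A$ is injective, which follows from Lemma~\ref{exact sequence}) and $S^2_0 E \oplus E$ also has rank $5+3 = 8$.

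The symbol at $\alpha$ sends $a \in V_A$ to $(\sigma_A(\alpha)(a),\, -\iota_{\alpha^\sharp} a)$, where $\sigma_A(\alpha)$ is given by~(\ref{formula for symbol}) and the second entry denotes metric contraction of the $\Lambda^1$-slot with $\alpha^\sharp$. Suppose this vanishes. By the short exact sequence of Lemma~\ref{exact sequence} I may write $a = \iota_u F_A + \alpha \otimes x$ for some $(u,x) \in TX \oplus E$; then $0 = \iota_{\alpha^\sharp}a = \iota_{\alpha^\sharp}\iota_u F_A + |\alpha|^2 x$ solves for $x$ in terms of $u$: in a local orthonormal frame with $F_A = \sum F_i \otimes e_i$ this reads $x_i = -|\alpha|^{-2}F_i(u,\alpha^\sharp)$.

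The remaining information is that $a \perp \im S_A$. Pairing $a$ with $S_A u = \iota_u F_A$ itself and substituting the formula for $x$ yields the identity
\[
|\iota_u F_A|^2\, |\alpha|^2 = \sum_i \langle \iota_u F_i,\, \alpha\rangle_{g_A}^2.
\]
Cauchy--Schwarz term by term bounds the right-hand side by $\sum_i |\iota_u F_i|^2 |\alpha|^2 = |\iota_u F_A|^2|\alpha|^2$, and equality throughout forces each $\iota_u F_i$ to be a scalar multiple of $\alpha$. If $u \neq 0$ this is impossible: definiteness of $A$ means $F_A \colon E \to \Lambda^+_A$ is an isomorphism, and for any nonzero self-dual 2-form $\tilde F$ on a 4-manifold, $\iota_u \tilde F \neq 0$ whenever $u\neq 0$ (visible in any local coframe adapted to~$u$). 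Consequently $e_i \mapsto \iota_u F_i$ has rank three in $\Lambda^1$, contradicting common proportionality to a single covector~$\alpha$. Hence $u = 0$, so $x = 0$ and $a = 0$.

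The main obstacle is isolating the right orthogonality relation. Only after testing $a\perp\im S_A$ specifically against $\iota_u F_A$, and inserting the constraint from $\iota_{\alpha^\sharp}a=0$, does the identity above surface in a form where Cauchy--Schwarz together with the nondegeneracy of self-dual 2-forms (a consequence of definiteness) can close the argument. The rest---the rank count, unpacking Lemma~\ref{exact sequence}, and reading off the symbols of $\delta_A Q$ and $\diff_A^*$---is essentially bookkeeping.
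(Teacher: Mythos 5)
Your proof is correct and rests on the same structural inputs as the paper's (Lemma~\ref{exact sequence} and the identification of the symbol of $\diff_A^*$ with $w_\alpha^*$), but the closing step is more elaborate than it needs to be. Once you have written $a = S_A(u) + w_\alpha(x)$ from exactness, the two remaining conditions $a \in V_A = (\im S_A)^\perp$ and $w_\alpha^*(a) = 0$ say precisely that $a$ is orthogonal to both $\im S_A$ and $\im w_\alpha$, hence to all of $\im(S_A + w_\alpha)$; but $a$ \emph{is} an element of $\im(S_A + w_\alpha)$, so $|a|^2 = \langle a, S_A(u) + w_\alpha(x)\rangle = 0$ immediately and no Cauchy--Schwarz or componentwise analysis of $\iota_u F_i$ is required. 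Your version decomposes the orthogonality pairing explicitly, solves for $x$ in terms of $u$, and then uses the equality case of Cauchy--Schwarz together with the injectivity of $\iota_u \colon \Lambda^+ \to \Lambda^1$ (for $u \neq 0$) to force $u=0$. This is valid---and the injectivity fact you invoke is exactly what the paper uses earlier to show $S_A + w_\alpha$ is injective---but you are re-deriving by hand a consequence that the abstract orthogonality argument gives for free. The rank count (both $V_A$ and $W$ have rank 8) to upgrade symbol injectivity to invertibility is fine and matches the paper's implicit bookkeeping.
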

\begin{proof}
This follows directly  from Lemma \ref{exact sequence}, along with the fact that the symbol of $\diff_A^*$ is the adjoint of $w_\alpha$.
\end{proof}

Notice that we have dealt with the two parts of the infinitesimal action $R_A(u, \xi) = -\iota_u F_A - \diff_A\xi$ in different ways, because they are of different orders. Restriction to $V_A$ compensates for $\iota_uF_A$; adding $\diff_A^*$ compensates for~$\diff_A\xi$. 

\subsection{A Dirac operator}\label{dirac operator}

We next explain why, for a perfect connection $A$, the gauge-fixed linearisation $D_A$ is essentially a certain Dirac operator associated to $g_A$. We begin by giving a purely Riemannian description of $D_A$ in this case.

When $A$ is perfect, $F_A \colon E \to \Lambda^+$ is an isometry. We will use this freely throughout this section to replace $E$ by $\Lambda^+$. So $D_A$ is identified with an operator whose domain is sections of a sub-bundle $V \subset \Lambda^1 \otimes \Lambda^+$ and whose range is sections of $W = S^2_0\Lambda^+ \oplus \Lambda^+$. The sub-bundle $V$ is defined as the orthogonal complement of the homomorphism $S \colon TX \to \Lambda^1 \otimes \Lambda^+$ given by $S(u) = \iota_u F_A$. The next Lemma shows this has a Riemannian description.


\begin{lemma}
Identifying $\Lambda^1 \otimes \Lambda^+ \cong \Hom(TX, \Lambda^+)$, the map $S$ is determined by the condition that $S(u)(v)$ is metric dual to the self-dual bivector $(u \wedge v)^+$. 

It follows that at each point $x$ of $X$, the decomposition $\Lambda^1 \otimes \Lambda^+ \cong V \oplus TX$ is  $\SO(T_xX)$ invariant.
\end{lemma}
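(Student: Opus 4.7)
The plan is to unfold the definition of $S$ after using the isometry $F_A \colon E \to \Lambda^+$ to transport everything from $E$ to $\Lambda^+$, and then to recognise the resulting formula as a purely Riemannian object built from $g_A$.

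For the first assertion, I would work in a local orthonormal frame $e_1, e_2, e_3$ of $E$, writing $F_A = \sum F_i \otimes e_i$. Since $A$ is perfect, the $F_i$ form an orthonormal frame of $\Lambda^+$, and the isometry $E \cong \Lambda^+$ is exactly $e_i \mapsto F_i$. Under this identification, $S(u)(v) = F_A(u,v) = \sum F_i(u,v)\, e_i$ becomes the element $\sum F_i(u,v)\, F_i$ of $\Lambda^+$. The key observation is that this element is characterised by pairing with any $\theta \in \Lambda^+$ (under the Riemannian inner product, for which the $F_i$ are orthonormal) to give $\theta(u,v)$. But the metric dual $\phi \in \Lambda^+$ of the self-dual bivector $(u\wedge v)^+$ is characterised by exactly the same property: $\langle \phi, \theta\rangle = \theta(u,v)$ for all $\theta \in \Lambda^+$, since only the self-dual part of $u \wedge v$ pairs nontrivially with $\Lambda^+$. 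Hence the two agree.

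For the second assertion, the characterisation just obtained makes no reference to $A$ or to any choice beyond the metric $g_A$. Since the construction $(u,v) \mapsto (u \wedge v)^+$ is $\SO(T_xX)$-equivariant (the Hodge star being $\SO$-invariant), so is $S$ pointwise at $x$; hence its image in $(\Lambda^1 \otimes \Lambda^+)_x$ is $\SO(T_xX)$-invariant, and so is the orthogonal complement $V$, the inner product on $\Lambda^1 \otimes \Lambda^+$ being itself $\SO$-invariant. There is no serious obstacle beyond bookkeeping among the identifications $E \cong \so(E) \cong \Lambda^+$ and $TX \cong \Lambda^1$; the one point to watch is the direction of the duality $\langle \phi, \theta\rangle_{\Lambda^+} = \theta(u,v)$, but this is immediate from expanding $\theta$ in an orthonormal frame.
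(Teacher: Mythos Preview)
Your proof is correct and follows essentially the same route as the paper: both compute $S(u)(v)$ in a local orthonormal frame as $\sum F_i(u,v)\,F_i$ and identify this with the metric dual of $(u\wedge v)^+$ using that the $F_i$ are orthonormal in $\Lambda^+$. You spell out the second assertion (the $\SO(T_xX)$-invariance) a bit more explicitly than the paper, which simply states that ``the result follows,'' but the reasoning is the same.
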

\begin{proof}
Given an orthonormal frame $e_1, e_2, e_3$ of $E$, write $F_A = \sum F_i \otimes e_i$. Then $S_A(u) = \iota_u F_A$ is identified with the element $\sum \iota_u F_i \otimes F_i$. So $S(u)(v)$ is metric dual to $ \sum F_i(u,v) F_i$. Since $A$ is perfect, the $F_i$ are an orthonormal basis for $\Lambda^+$ and the result follows.
\end{proof}

We next define a homomorphism 
\[
c \colon \Lambda^1 \otimes V \to W
\]
which will be the symbol of $D_A$ when $A$ is perfect. We begin with the map
\[
c_1 \colon \Lambda^1 \otimes \Lambda^1 \otimes \Lambda^+ \to S^2_0\Lambda^+
\]
given by the composition
\[
\left(\Lambda^1 \otimes \Lambda^1 \right)\otimes \Lambda^+
\to
\Lambda^2 \otimes \Lambda^+
\to
\Lambda^+ \otimes \Lambda^+ 
\to
S^2_0 \Lambda^+
\]
where the first arrow is skew-symmetrisation on the bracketed factor, the second arrow is projection to $\Lambda^+$ on the first factor and the third arrow is the projection determined by the metric onto the symmetric trace-free part. Next, we write $c_2$ for the homomorphism
\[
c_2 \colon \Lambda^1 \otimes \Lambda^1 \otimes \Lambda^+ \to \Lambda^+
\]
given by using the metric to contract the two copies of $\Lambda^1$. We now define 
\[
c = 2c_1 \oplus c_2  \colon \Lambda^1 \otimes V \to W
\] 
where we have restricted $c_1$ and $c_2$ to $\Lambda^1 \otimes V \subset \Lambda^1 \otimes \Lambda^1 \otimes \Lambda^+$.

\begin{lemma}\label{DA when A perfect}
When $A$ is perfect, the gauge-fixed linearisation 
\[
D_A \colon \Omega^0(V) \to \Omega^0(W)
\]
is the composition $D_A = c \circ \nabla$
\[
\Omega^0(V) \stackrel{\nabla}{\to} \Omega^1(V) \stackrel{c}{\to} \Omega^0(W)
\]
of the Levi-Civita connection and the homomorphism $c$.
\end{lemma}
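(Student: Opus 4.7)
The plan is to unpack both summands of $D_A = \delta_A Q \oplus \diff_A^*$ under the perfectness hypothesis and match each piece with the corresponding component of $c \circ \nabla$.

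First I would use Lemma \ref{perfect is LC}: when $A$ is perfect, $F_A$ is an isometry $E \to \Lambda^+$ intertwining $A$ with the Levi-Civita connection $\nabla$. Consequently $\nabla^A = \nabla$, and $\diff_A \colon \Omega^1(\Lambda^+) \to \Omega^2(\Lambda^+)$ is simply the skew-symmetrisation of $\nabla$ in the first two $\Lambda^1$ factors. Moreover, the preceding lemma provides a purely Riemannian description of $V \subset \Lambda^1 \otimes \Lambda^+$, so $V$ is $\nabla$-parallel and the composition $c \circ \nabla$ is a well-defined operator $\Omega^0(V) \to \Omega^0(W)$.

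Next I would analyse the $S^2_0 \Lambda^+$-valued summand. Since $Q(A) = \Id$ for a perfect connection, Lemma \ref{linearisation of Q} becomes
\[
\delta_A Q(a) = 2 L_A(\diff_A a) - \tfrac{2}{3}(F_A, \diff_A a)\,\Id,
\]
which is twice the trace-free part of $L_A(\diff_A a)$. By Lemma \ref{matrix form of L}, $L_A$ annihilates $\Lambda^- \otimes E$ and, with $Q(A) = \Id$, reduces on $\Lambda^+ \otimes E \cong \Lambda^+ \otimes \Lambda^+$ to the symmetric projection $T \mapsto \tfrac{1}{2}(T + T^t)$. Hence $\delta_A Q(a)$ is twice the trace-free symmetric part of $(\diff_A a)^+ \in \Lambda^+ \otimes \Lambda^+$. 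This is exactly $2 c_1(\nabla a)$, because by construction $c_1$ is the three-step composition ``skew-symmetrise the two $\Lambda^1$ factors, project the first to $\Lambda^+$, take the trace-free symmetric part'' applied to $\nabla a \in \Lambda^1 \otimes \Lambda^1 \otimes \Lambda^+$.

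For the remaining summand $\diff_A^*$ on $\Omega^1(\Lambda^+)$, the standard Riemannian formula $\diff_A^* = -\tr_g \nabla^A$ identifies it with $c_2(\nabla a)$ (with the sign convention absorbed into the definition of $c_2$, which is only specified as the metric contraction of the two $\Lambda^1$ factors). Combining the two summands gives $D_A = (2c_1 \oplus c_2) \circ \nabla = c \circ \nabla$. The argument presents no substantive obstacle, since everything reduces to unwinding the algebraic definitions; the only points needing genuine care are verifying that $V$ is truly $\nabla$-parallel (so that $c \circ \nabla$ makes sense), and keeping the factor of $2$ straight when passing from the symmetric-projection formula for $L_A$ to the trace-free symmetric part that defines $c_1$.
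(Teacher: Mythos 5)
Your argument is correct and follows essentially the same path as the paper's own (considerably terser) proof: specialize Lemma \ref{linearisation of Q} to $Q(A)=\Id$ to get $\delta_A Q(a)=2L_A(\diff_A a)_0$, use the fact that $A$ is identified with the Levi-Civita connection and that $L_A$ on $\Lambda^+\otimes\Lambda^+$ is the symmetric projection to match this with $2c_1\circ\nabla$, and identify $\diff_A^*$ with $c_2\circ\nabla$. Your added remark that the $\SO(T_xX)$-invariance of the splitting $\Lambda^1\otimes\Lambda^+\cong V\oplus TX$ (from the preceding lemma) is precisely what makes $V$ parallel, so that $c\circ\nabla$ is well-defined, is a point the paper leaves implicit and is correctly identified as the one genuine thing that needs checking.
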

 
\begin{proof}
When $A$ is perfect, Lemma \ref{linearisation of Q} gives that 
\[
\delta_AQ(a) = 2 L_A(\diff_Aa)_0
\]
where the 0-subscript denotes the trace-free part. Moreover, since $A$ is perfect, it follows from the definition of $L_A$ and the fact that $A$ is identified with the Levi-Civita connection $\nabla$ on $\Lambda^+$ that
\[
\delta_AQ = 2c_1 \circ \nabla
\]
Similarly, $\diff_A^* = c_2 \circ \nabla$ which completes the proof.
\end{proof}

We will now use this result to show that $D_A$ is essentially a coupled Dirac operator. Write $S_\pm$ for the spin bundles of $X$ and $S^m_\pm$ for the $m^{\text{th}}$~symmetric product of $S_\pm$. (We will only ever encounter even numbers of products of the $S_\pm$, so the question of whether or not $X$ is spin does not enter.) We will prove that $V \otimes \C = S^3_+ \otimes S_- $ and $W\otimes \C = S^3_+ \otimes S_+$ are opposite spin bundles and that $c$ is essentially Clifford multiplication between them. From this and Lemma \ref{DA when A perfect} it follows that $D_A$ is the Dirac operator coupled to the Levi-Civita connection on $S_+^3$ (modulo a certain choice of scaling, as will be explained). To describe this we begin with some standard results concerning $\Spin(4) = \SU(2)_+ \times \SU(2)_-$ representations pertinent to our discussion (all of which can be proved via the classification of representations of $\SU(2) \times \SU(2)$ and the Clebsch--Gordan formulae). In the following, $S_\pm$ is the fundamental representation of $\SU(2)_\pm$.

\begin{lemma}\label{spin reps}
There are the following isomorphisms of $\Spin(4)$-rep\-resen\-tations, where in each case the right-hand side is a decomposition into irreducible summands.
\begin{enumerate}
\item
$ \R^4 \otimes \C \cong S_+ \otimes S_-$.
\item
$\Lambda^+(\R^4) \otimes \C \cong S^2_+$.
\item
$S^2_0(S^2_+) \cong S^4_+$.
\item
$S^3_+ \otimes S_+ \cong S^4_+ \oplus S^2_+$
\item
$(S_+\otimes S_-) \otimes S^2_+ \cong (S_+ \otimes S_-) \oplus (S^3_+\otimes S_-)$.
\item
$(S_+ \otimes S_-) \otimes (S^3_+ \otimes S_-)
\cong
(S^4_+ \otimes S^2_-) \oplus (S^2_+\otimes S^2_-) \oplus S^4_+ \oplus S^2_+$.
\end{enumerate}
\end{lemma}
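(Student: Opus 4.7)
The plan is to work entirely at the level of representation theory of $\Spin(4) = \SU(2)_+ \times \SU(2)_-$, for which the irreducible representations are the external tensor products $S^a_+ \otimes S^b_-$ and all decompositions follow from the Clebsch--Gordan formula
\[
S^a \otimes S^b \;\cong\; S^{a+b} \oplus S^{a+b-2} \oplus \cdots \oplus S^{|a-b|}
\]
applied separately to the $+$ and $-$ factors.

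For (1) and (2), I would simply recall the defining identifications: the Spin double cover of $\SO(4)$ realises the complexified standard representation $\R^4 \otimes \C$ as $S_+ \otimes S_-$, and under the isomorphism $\so(4) \cong \so(3) \oplus \so(3)$ the summand $\Lambda^+$ is the complexified adjoint of the first $\SU(2)$ factor, which equals $S^2_+$. (These are the same identifications invoked in Definition \ref{definition metric from connection} and throughout Section \ref{perfect connections}.)

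For (3)--(6) I would just grind out Clebsch--Gordan on each factor separately. Item (3) follows from $\operatorname{Sym}^2 S^2_+ = S^4_+ \oplus S^0_+$, from which removing the trace kills the trivial summand. Items (4) and (5) are single applications of Clebsch--Gordan: $S^3 \otimes S^1 = S^4 \oplus S^2$ gives (4), and $S^1 \otimes S^2 = S^3 \oplus S^1$ tensored with the untouched $S_-$ gives (5). Item (6) factors as
\[
(S_+ \otimes S^3_+) \otimes (S_- \otimes S_-) \;\cong\; (S^4_+ \oplus S^2_+) \otimes (S^2_- \oplus S^0_-),
\]
which expands into the four claimed summands.

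There is essentially no obstacle here; the only mild care required is checking that the decomposition in (6) is indeed into irreducibles (no further splitting occurs since each factor $S^a_+ \otimes S^b_-$ is already irreducible as a $\Spin(4)$-representation) and that the trace removal in (3) produces exactly $S^4_+$ and not some other complement. Both are immediate once one notes that irreducibles of $\Spin(4)$ are indexed by pairs $(a,b)$ with multiplicity one, so the summands listed on the right of each line in the Lemma exhaust the decomposition by a dimension count. I would therefore present the proof as a compact table of Clebsch--Gordan computations, preceded by the two standard identifications (1)--(2).
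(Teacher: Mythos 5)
Your proposal is correct and matches the paper's approach exactly: the paper itself gives no written proof of this lemma, merely remarking that all parts follow from the classification of $\SU(2)\times\SU(2)$ representations and the Clebsch--Gordan formulae, which is precisely the computation you spell out.
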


We now apply this to the tangent bundle of $X$. By irreducibility, the complexification of the splitting
\[
\Lambda^1 \otimes \Lambda^+ \cong V \oplus TX
\]
is given by part 5  of Lemma \ref{spin reps} (via parts 1 and 2). It follows that
\[
V \otimes \C = S^3_+ \otimes S_-.
\]
So the domain of $D_A$ is a twisted spin bundle. Moreover, by parts 2, 3 and 4 of Lemma~\ref{spin reps}, the range $W = S^2_0 \Lambda^+ \oplus \Lambda^+$ has complexification
\[
W \otimes \C
=
S^3_+ \otimes S_+
\]
which is the opposite spin bundle.

The bundle $S^3_+ \otimes S_-$ carries a Dirac operator 
\[
D^- \colon \Omega^0(S^3_+ \otimes S_-) \to \Omega^0(S^3_+ \otimes S_+)
\]
built by composing the Levi-Civita connection on $S^3_+$ with Clifford multiplication
\[
 (S_+ \otimes S_-)\otimes(S^3_+ \otimes S_-)
\to
S^3_+ \otimes S_+.
\]
In terms of Lemma \ref{spin reps}, this corresponds to projection on to the final pair of summands $S^4_+ \oplus S^2_+$ in the decomposition of part 6, which is the same as $S^3_+ \otimes S_+$ by part 4. Meanwhile, the complexification of the homomorphism $c$ is also a $\Spin(4)$-equivariant map between the same spaces. It follows from the decomposition of Lemma \ref{spin reps} and Schur's Lemma that there is a two-parameter family of such maps given by composing Clifford multiplication with scaling separately on $S^4_+$ and on $S^2_+$. Hence there is a 2-complex-parameter family of first-order elliptic differential operators $\Omega^0(S^3_+ \otimes S_-) \to \Omega^0(S^3 _+\otimes S_+)$ which include the Dirac operator $D^-$ and the complexification of~$D_A$. Note that neither scaling parameter can be zero (or ellipticity would fail) but that any other scale can be used. The family of operators is thus parametrised by the complement of the axes in $\C^2$. In particular it is connected

To a large extent these scale factors are irrelevant. From the point of view of ellipticity, the choice of relative scales of the summands in the definition $D_A = \delta_AQ \oplus \diff_A^*$ is arbitrary; the above argument makes it clear that a choice exists for which $D_A$ is exactly equal to the Dirac operator $D^-$ when $A$ is perfect.

\subsection{The index of $D_A$}\label{index}

\begin{proposition}
Given a definite connection $A$, the index of the gauge-fixed linearisation $D_A$ is
\[
\ind(D_A) = - 5\chi(X) - 7\tau(X),
\]
where $\tau(X)$ is the signature and $\chi(X)$  the Euler characteristic of $X$.
\end{proposition}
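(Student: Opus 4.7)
The plan is to apply the Atiyah--Singer index theorem after reducing to a model twisted Dirac operator. The index is a $K$-theoretic invariant of the principal symbol, and the symbol exact sequence (\ref{symbol exact sequence}) shows that $\sigma(D_A)$ is elliptic for every $A \in \D$, with $W = S^2_0 E \oplus E$ independent of $A$ and the K-theory class $[V_A] = [\Lambda^1 \otimes E] - [TX] - [E]$ also independent of $A$. Hence $\ind(D_A)$ is independent of $A$. By \S\ref{dirac operator}, the symbol class of $D_A$ coincides with that of the twisted Dirac operator
\[
D^- \colon \Omega^0(S^3_+ \otimes S_-) \to \Omega^0(S^3_+ \otimes S_+),
\]
since the two-parameter family of admissible scalings lies in a connected open subset of $\C^2$. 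This reduction is purely K-theoretic and does not require that a perfect connection actually exist on $X$.

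The operator $D^-$ is the formal adjoint of the Dirac operator twisted by $S^3_+$, so
\[
\ind(D_A) = \ind(D^-) = -\int_X \hat A(X)\,\ch(S^3_+).
\]
To evaluate the integral I would introduce formal Chern roots $\pm x$ for $S_+$ and $\pm y$ for $S_-$, so that $\Lambda^+ \otimes \C = S^2_+$ has roots $\pm 2x, 0$ and $TX \otimes \C = S_+ \otimes S_-$ has roots $\pm(x+y), \pm(x-y)$. This yields $p_1(TX) = 2x^2 + 2y^2$ and $e(TX) = x^2 - y^2$, and inverting $\tau(X) = \tfrac{1}{3}p_1(TX)[X]$ and $\chi(X) = e(TX)[X]$ gives
\[
x^2[X] = \tfrac{2\chi + 3\tau}{4}, \qquad y^2[X] = \tfrac{-2\chi + 3\tau}{4}.
\]
Since $\ch(S^3_+) = e^{3x} + e^x + e^{-x} + e^{-3x} = 4 + 10 x^2 + O(x^4)$ and $\hat A(X) = 1 - p_1(TX)/24 + \cdots$, the degree-$4$ component of the product is $10 x^2 - \tfrac{1}{3}(x^2 + y^2)$, which integrates to
\[
10 \cdot \tfrac{2\chi + 3\tau}{4} - \tfrac{1}{3} \cdot \tfrac{3\tau}{2} = 5\chi(X) + 7\tau(X),
\]
and hence $\ind(D_A) = -5\chi(X) - 7\tau(X)$.

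The main obstacle is bookkeeping of signs and Dirac-operator conventions: one must verify that the identification in \S\ref{dirac operator} at a perfect connection is with the operator $D^-$ mapping $S^3_+ \otimes S_- \to S^3_+ \otimes S_+$ rather than its formal adjoint, so that the leading minus sign from Atiyah--Singer is applied exactly once. Once this is pinned down, the remaining Chern-root computation is essentially routine.
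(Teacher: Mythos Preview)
Your proof is correct and follows essentially the same strategy as the paper: reduce $\ind(D_A)$ to $\ind(D^-)$ by symbol-deformation/K-theory invariance, then apply Atiyah--Singer. The only cosmetic difference is that the paper computes $\ch(S^3_+)$ via the Clebsch--Gordan decompositions $S_+\otimes S_+ \cong \underline{\C}\oplus S^2_+$ and $S_+\otimes S^2_+ \cong S_+\oplus S^3_+$ (yielding $\ch(S^3_+)=4-10\,c_2(S_+)$), whereas you use the Chern-root expansion $e^{3x}+e^{x}+e^{-x}+e^{-3x}=4+10x^2+\cdots$, which is the same thing since $c_2(S_+)=-x^2$.
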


\begin{proof}
We have just seen that when $A$ is perfect, $D_A$ can be connected by a path of elliptic operators on $S^3_+ \otimes S_-$ to the Dirac operator $D^-$ coupled to the Levi-Civita connection on $S^3_+$. By deformation invariance of the index, $\ind D_A = \ind D^-$. 

In fact, this same equality holds for an arbitrary definite connection. This is because, whilst $D_A$ and $D^-$ are not simply related in general, their symbols are. More precisely, let $F_t \colon E \to \Lambda^+_A$ be a path of bundle-valued 2-forms, with $\ker F_t = 0$, $F_0 = F_A$ and $F_1$ an isometry onto $\Lambda_A^+$. Using $F_t$ in place of $F_A$ in the symbol sequence (\ref{symbol exact sequence}) gives a path of elliptic symbols starting at that of $D_A$ and ending at that of $D^-$. 
So, by deformation invariance of the index it suffices to compute $\ind(D^-)$. 

By the Atiyah--Singer index theorem this is
$$
\ind(D^-)
=
- \int_X \hat A \cdot \ch(S^3_+),
$$
where $\hat A$ is the A-hat genus of $X$, which for a four-manifold is $1 - \frac{1}{24} p_1(X)$. To compute the Chern character $\ch(S^3_+)$ first note that $S_+$ is an $\SU(2)$-bundle, so $\ch(S_+) = 2 -c_2(S_+)$ and $S_+ \otimes S_+ \cong \underline{\C} \oplus S^2_+$. It follows that
\[
\ch(S^2_+) 
\ =\ 
\ch(S_+) \cdot \ch(S_+) -1
\ =\
3 - 4c_2(S_+).
\]
Next we use the fact that $S_+ \otimes S^2_+ \cong S_+ \oplus S^3_+ $ to deduce
\[
\ch(S^3_+)
\ =\
\ch(S_+) \cdot ( \ch(S^2_+) - 1)
\ =\
4 - 10 c_2(S_+).
\]
So the index theorem says
\[
\ind(D^-) 
= 
\int _X \left( 10 c_2(S_+) + \frac{1}{6}p_1(X) \right).
\]
To compute this we use
\begin{eqnarray*}
\int_X p_1(X) &=& 3 \tau(X),\\
\int_X c_2(S_+) & =& -\frac{1}{4}(2\chi(X) + 3\tau(X)),\\
\end{eqnarray*}
This gives $\ind(D^-) = -5\chi(X) - 7 \tau(X)$.
\end{proof}

\subsection{A twistorial description of $D^-$ and rigidity}\label{twistor fact}

In this section we will explain why perfect connections are rigid modulo gauge; in other words, that if $A$ is perfect then $\ker D_A = 0$. We will also explain another phenomenon: in the cases of $S^4$ and $\overline{\C\P}^2$ the index is of $D_A$ is $-10$ and $-8$ respectively (the Fubini--Study metric on $\C\P^2$ is anti-self-dual in the non-complex orientation). In both cases this is precisely minus the dimension of the isometry group. However, for compact hyperbolic 4-manifolds, where the isometry group is discrete, the corresponding index is $-5 \chi(X)$ which is proportional to the volume and so never zero. 

The explanation of these fact comes from twistor theory and is certainly known to the experts. The main work is done in \cite{Hitchin1980Linear-field-eq,LeBrun1988A-rigidity-theo,Horan1996A-rigidity-theo} but since the whole statement we need is not explicitly to be found there we give here an overview of the argument, referring to the above articles for the proofs of various parts. All the material in this section was kindly explained to us by Claude LeBrun. 

To fix notation we recall some bare facts of twistor theory. For more information as well as proofs see \cite{Atiyah1978Self-duality-in,Besse1987Einstein-manifo}. Let $X$ be an oriented Riemannian manifold. The twistor space of $X$ is the projectivised spin bundle $Z= \P(S_+)$. Clifford multiplication defines a natural almost complex structure on $Z$ and this is integrable precisely when the metric on $X$ is anti-self-dual. In this case the fibres of $Z \to X$ are holomorphic $\C\P^1$s with normal bundle $\O(1)\oplus \O(1)$. Moreover, the anti-canonical bundle of $Z$ has a natural square root which we denote by $\O(2)$, since its fibrewise restriction is isomorphic to $\O(2) \to \C\P^1$. 

The first result we explain is the following.

\begin{theorem}[Cf.\ Hitchin \cite{Hitchin1980Linear-field-eq} Theorem 4.1(ii)]\label{twistor_cohom_dirac}
Let $X$ be an anti-self-dual manifold Riemannian four-manifold with twistor space $Z$. Write $D^-$ for the Dirac operator on $S_-\otimes S^3_+$ coupled to the Levi-Civita connection on $S^3_+$. Then:
\begin{enumerate}
\item $\ker D^- \cong H^1(Z,\O(2))$.
\item $\coker D^- \cong H^0(Z,\O(2)) \oplus H^2(Z,\O(2))$.
\end{enumerate}
\end{theorem}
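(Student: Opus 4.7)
The natural approach is a Penrose transform. The plan is to compute $H^\ast(Z, \O(2))$ by taking a relative Dolbeault resolution of $\O(2)$, pushing it forward along the fibration $\pi : Z \to X$, and identifying the resulting elliptic complex on $X$ with the coupled Dirac operator $D^-$. Integrability of the almost complex structure on $Z$---equivalent to anti-self-duality of $g$---is what makes $\O(2)$ a holomorphic line bundle and the Dolbeault complex an honest resolution.

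I would begin by computing the direct images. Along each fibre $\C\P^1_x$, $\O(2)$ restricts to the degree-two line bundle on $\C\P^1$, so $\pi_\ast \O(2) \cong S^2_+$ (by Lemma \ref{spin reps}(2)) and $R^1 \pi_\ast \O(2) = 0$. Next, using the Levi-Civita connection on $S_+$, the antiholomorphic tangent bundle $T^{0,1}Z$ splits into a rank-one vertical piece plus a horizontal piece which, under complexification, is $\pi^\ast(S_+ \otimes S_-)$ (Lemma \ref{spin reps}(1)). This induces a bigrading on the sheaves $\A^{0,\bullet}(Z, \O(2))$, with a corresponding splitting $\bar\partial = \bar\partial_v + \bar\partial_h$; the vertical piece is fibrewise $\bar\partial$ on $\C\P^1$, while the horizontal piece is built from the Levi-Civita derivative. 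Pushing down along $\pi$ and collapsing the vertical Dolbeault complex via fibrewise Hodge theory yields an elliptic complex on $X$. Its terms decompose by Lemma \ref{spin reps}(3)--(6) into tensor products of symmetric powers of $S_\pm$. The key task is then to extract the summands $S^3_+ \otimes S_-$ in degree one and $S^3_+ \otimes S_+$ in degree two, and to verify that the connecting differential is, up to a non-zero overall scaling, the coupled Dirac operator $D^-$. The resulting cohomology computation then yields $H^1(Z, \O(2)) \cong \ker D^-$, while the degree-zero and degree-three terms of the pushed-down complex contribute $H^0(Z,\O(2))$ and $H^2(Z,\O(2))$ to $\coker D^-$.

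The main obstacle is the matching step: verifying that the horizontal differential $\bar\partial_h$ really does induce Clifford multiplication composed with the Levi-Civita derivative. This uses both integrability ($\bar\partial^2 = 0$, together with the identity $\bar\partial_v \bar\partial_h + \bar\partial_h \bar\partial_v = 0$) and the description of $\bar\partial_h$ in terms of spin geometry. An alternative route avoiding some of the direct calculation is Serre duality on $Z$: since $K_Z = \O(-4)$, one has $H^i(Z, \O(2)) \cong H^{3-i}(Z, \O(-6))^\ast$, and Hitchin's Theorem 4.1(ii) in its original form for the negatively-twisted bundles $\O(-n-2)$ (here with $n=4$) can be combined with the irreducible decompositions in Lemma \ref{spin reps} to match the dual cohomologies to $\coker D^-$ and $\ker D^-$ respectively.
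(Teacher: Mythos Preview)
Your strategy is essentially the paper's: a Penrose transform computing $H^*(Z,\O(2))$ via a resolution adapted to the fibration $Z\to X$, then identifying the resulting complex on $X$ with $D^-$. The paper packages this slightly differently, building directly a three-term acyclic resolution
\[
0 \to \O(2) \to \V(2) \to \V^{0,1}(2) \to \V^{0,2}(2) \to 0
\]
by sheaves of smooth sections which are \emph{holomorphic along the fibres} (the $\V^{0,q}$ being sections of $\Lambda^qN\otimes\O(2)$, with $N$ the rank-2 ``horizontal $(0,1)$-covector'' bundle). This is precisely the $E_1$-page of your bigraded spectral sequence after collapsing the vertical $\bar\partial$; fibrewise positivity of $\O(2)$ kills the $p=1$ row, so the spectral sequence degenerates and one is left with the paper's three-term complex. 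The direct construction has the advantage of bypassing the bicomplex machinery.

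Your account of the resulting complex on $X$ is slightly off, and this is worth correcting since it affects how $D^-$ appears. The pushed-down complex has exactly \emph{three} terms, in degrees $0,1,2$:
\[
C^\infty(X,S^2_+) \xrightarrow{d_1} C^\infty(X,S^3_+\otimes S_-) \xrightarrow{d_2} C^\infty(X,S^4_+),
\]
coming from $H^0$ of $\O(2)$, $N\otimes\O(2)\cong\O(3)^{\oplus2}$, and $\Lambda^2N\otimes\O(2)\cong\O(4)$ along each fibre. There is no ``degree-three'' term and no extraction of summands: degree~1 is already $S^3_+\otimes S_-$, while degrees~0 and~2 together give $S^2_+\oplus S^4_+\cong S^3_+\otimes S_+$ by part~4 of Lemma~\ref{spin reps}. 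The Dirac operator is then $D^- = d_1^* \oplus d_2$, not a single horizontal differential; from this one reads off $\ker D^- = \ker d_2 \cap (\mathrm{im}\,d_1)^\perp \cong H^1$ and $\coker D^- = \ker d_1 \oplus \coker d_2 \cong H^0\oplus H^2$. Your Serre-duality alternative is legitimate but unnecessary once the indexing is straightened out.
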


The proof of this begins with a resolution of $\O(2)$. In fact, this resolution makes sense for an arbitrary holomorphic vector bundle $E \to Z$. (This procedure gives a nice way to carry out the Penrose transform in this context without recourse to the double fibration picture explained for example in \cite{Baston1989The-Penrose-tra}.) We write $\V(E)$ for the sheaf of smooth sections of $E$ which are holomorphic along the fibres of $Z \to X$. The first part of the resolution we are looking for is $0 \to \O(E) \to \V(E)$. 

To continue this sequence, consider the $\delb$-operator on $\V(E)\subset C^\infty(E)$. For an arbitrary smooth section $s$ of $E$, $\delb s \in C^\infty(\overline{T^*Z} \otimes E)$. However, when $s \in \V(E)$ then $\delb s$ vanishes on vertical $(0,1)$-vectors. Write $N \subset \overline{T^*Z}$ for those covectors vanishing on vertical $(0,1)$-vectors; $N \to X$ is a rank 2 complex bundle and by definition $\delb s$ is a section of $N \otimes E$. Now $N$ is not a holomorphic bundle over the whole of $Z$, however on restriction to each fibre it \emph{is} holomorphic: it is canonically identified with the normal bundle (as the notation is meant to suggest). This means we may define the sheaf of smooth sections of $N\otimes E$ which are vertically holomorphic. We denote this sheaf by $\V^{0,1}(E)$. If $s\in \V(E)$ then $\delb s \in \V^{0,1}(E)$, since $s$ is already vertically holomorphic. So we have extended our exact sequence of sheaves to $0 \to \O(E) \to \V(E) \to \V^{0,1}(E)$. 

To go one step further we consider the $\delb$-operator restricted to $\V^{0,1} \subset C^{\infty}(\overline{T^*Z} \otimes E)$ which takes values in $C^{\infty}(\Lambda^2\overline{T^*Z}\otimes E)$. One can show that if $s$ is a section in $\V^{0,1}(E)$ then the $(0,2)$-form $\delb s$ vanishes when one contracts with a single vertical $(0,1)$-vector. This amounts to the integrability of the $\delb$-operator together with the fact that the fibres of $Z \to X$ are complex curves. The subspace of $\Lambda^2\overline{T^*X}$ which vanishes on contraction with a single vertical $(0,1)$-vector is naturally identified with $\Lambda^2N$. So if $s$ is a section in $\V^{0,1}(E)$, then $\delb s$ is a smooth section of $\Lambda^2N \otimes E$. Again, $\Lambda^2N$ is not a holomorphic bundle on the whole of $Z$ but it \emph{is} holomorphic when restricted to a fibre, where it is naturally identified with the determinant of the normal bundle. This means we can consider the sheaf of smooth sections of $\Lambda^2N \otimes E$ which are fibrewise holomorphic, which we denote by $\V^{0,2}(E)$. Again, if $s$ is a section of  $\V^{0,1}(E)$ then $\delb s$ is a section of $\V^{0,2}(E)$. We now have an exact sequence of sheaves
\[
0 \to \O(E) \to \V(E) \to \V^{0,1}(E) \to \V^{0,2}(E) \to 0
\]
We would like to use this resolution to compute the cohomology of $\O(E)$. To this end we have the following result.

\begin{proposition}
If the restriction of $E$ to each fibre is a positive vector bundle then the resolution 
\[
0 \to \O(E) \to \V(E) \to \V^{0,1}(E) \to \V^{0,2}(E) \to 0
\]
is acyclic. 
\end{proposition}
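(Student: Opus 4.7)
The plan is to reduce the cohomology of each $\V^{0,q}(E)$ on the twistor space $Z$ to cohomology on the base $X$ via the Leray spectral sequence for $\pi\colon Z \to X$. Concretely, acyclicity of $\V^{0,q}(E)$ on $Z$ will follow from two facts: (i) the higher direct images $R^j\pi_* \V^{0,q}(E)$ vanish for $j>0$; (ii) the pushforward $\pi_* \V^{0,q}(E)$ is the sheaf of smooth sections of a smooth vector bundle on $X$, hence fine and so acyclic. Leray then forces $H^i(Z, \V^{0,q}(E)) = 0$ for $i > 0$, which is exactly what acyclicity of the resolution requires.

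The key input is a fibrewise calculation. On each twistor fibre $C_x = \pi^{-1}(x)\cong \C\P^1$, the vertical $\delb$-equation reduces to the ordinary $\delb$-equation, so the restriction of $\V^{0,q}(E)$ to $C_x$ is the sheaf of holomorphic sections of $\Lambda^q N|_{C_x} \otimes E|_{C_x}$. The bundle $N|_{C_x}$ is canonically identified with the normal bundle to $C_x$ in $Z$, which is $\O(1)\oplus \O(1)$; hence $\Lambda^q N|_{C_x}$ is $\O$, $\O(1)^{\oplus 2}$, or $\O(2)$ for $q = 0,1,2$. Tensoring with the positive bundle $E|_{C_x}$ produces a direct sum of line bundles $\O(n_i)$ with all $n_i \geq 0$, and so $H^j(C_x, \Lambda^q N|_{C_x} \otimes E|_{C_x}) = 0$ for $j > 0$ by the standard cohomology computation on $\C\P^1$.

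To upgrade this fibrewise vanishing to the statements (i) and (ii) above, I would invoke a smooth-parameter analogue of Grauert's base change and direct image theorem. Because the fibrewise $H^1$ vanishes and Riemann--Roch gives a locally constant $h^0$ on the fibres, one can construct a smoothly-varying family of Green's operators for the fibrewise Dolbeault Laplacian; the associated projection onto fibrewise harmonic sections then identifies $\pi_* \V^{0,q}(E)$ with smooth sections of the smooth vector bundle $x\mapsto H^0(C_x,\Lambda^q N|_{C_x} \otimes E|_{C_x})$ over $X$, which is fine and therefore acyclic. The same smoothly-varying Green's operator machinery, applied in each fibrewise degree, yields $R^j\pi_*\V^{0,q}(E) = 0$ for $j > 0$. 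The main obstacle is precisely this smooth direct-image step: one has a family of holomorphic $\C\P^1$'s parametrised by $X$ (which carries no global complex structure), and one needs to verify that the fibrewise Hodge-theoretic data varies smoothly in the base parameter. Once this smooth Grauert-type statement is in place, the remainder of the argument reduces to Kodaira vanishing on $\C\P^1$ and the softness of sheaves of smooth sections on $X$.
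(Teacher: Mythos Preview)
Your proposal is correct and follows essentially the same route as the paper: Leray for $\pi\colon Z\to X$, fibrewise positivity of $\Lambda^q N\otimes E$ on $\C\P^1$ to kill the higher fibrewise cohomology, and softness of the resulting sheaf of smooth sections on $X$ to kill the rest. The paper simply asserts the existence of the spectral sequence with $E_2$-page $H^p(X,\underline{H}^q(E))$ and that $\underline{H}^q(E)$ is soft, whereas you are more explicit (and more honest) about the smooth-parameter Grauert step needed to identify $R^j\pi_*\V^{0,q}(E)$ with the fibrewise Dolbeault cohomology bundle; this is indeed the only point requiring real care, and the Green's-operator argument you sketch is the standard way to handle it.
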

\begin{proof}
There is a Leray--Serre spectral sequence coming from the fibration $Z \to X$ which computes the cohomology groups of $\V(E)$. The $E_2$-page is made up of the groups $H^p(X, \underline{H}^q(E))$ where $\underline{H}^q(E)$ is the sheaf of sections of the smooth vector bundle on $X$ whose fibre at $x$ is the $q^\text{th}$ cohomology group of the holomorphic bundle got by restricting $E$ to the fibre over~$x$. Since the sheaf $\underline{H}^q(E)$ over $X$ is soft, $H^p(X ,\underline{H}^q(E)) = 0$ for $p >0$. Since $E$ is fibrewise positive, $\underline{H}^q(E) = 0$ for $q>0$. It follows that the only non-zero element on the $E_2$-page is $H^0(X, \underline{H}^0(E))$. So the spectral sequence degenerates and the only non-zero cohomology group is $H^0(X, \V(E))$. 

We next run the same argument for the sheaf $\V^{0,1}(E)$. Recall that this is the sheaf of sections of $N \otimes E$ which are fibrewise holomorphic. Since the fibrewise restriction of $N$ is isomorphic to $\O(1) \oplus \O(1)$, the fibrewise restriction of $N\otimes E$ is again positive and so the only non-zero cohomology group is $H^0(X, \V^{0,1}(E))$.

Finally, we do the same for $\V^{0,2}(E)$. This sheaf is the sections of $\Lambda^2N \otimes E$ which are fibrewise holomorphic. The fibrewise restriction of $\Lambda^2N$ is isomorphic to $\O(2)$ and so $\Lambda^2N\otimes E$ is also fibrewise positive, completing the proof.
\end{proof}

We can now prove the result above relating the kernel and cokernel of $D^-$ to the cohomology of $\O(2)$.

\begin{proof}[Sketch of proof of Theorem \ref{twistor_cohom_dirac}]
$\O(2)$ is certainly fibrewise positive and so, by the previous result, we have that the cohomology of $\O(2)$ is equal to that of the complex
\[
H^0(Z, \V(2)) \stackrel{d_1}{\to} H^0(Z, \V^{0,1}(2)) \stackrel{d_2}{\to} H^0(Z, \V^{0,2}(2))
\]
Here, $\V(2) = \V(\O(2))$ and so forth. Now, $H^0(Z, \V(2))$ can be identified with $C^\infty(X, S^2_+)$. This is because $Z = \P(S_+)$ and moreover $S_+ \cong S_+^*$. In a similar vein, one can show that $H^0(Z, \V^{0,1}(2)) = C^{\infty}(X, S^3_+ \otimes S_-)$ and $H^0(Z, \V^{0,2}(2)) = C^\infty(X, S^4_+)$. See \cite{Hitchin1980Linear-field-eq} for details. Moreover, it is shown there that the maps $d_1$ and $d_2$ in the above complex combine to give the Dirac operator $D^-$, via the identification 4 of Lemma \ref{spin reps}:
\[
S^2_+ \oplus S^4_+ \cong S^3_+ \otimes S_+,
\]
\[
d_1^* + d_2 \cong D^-
\colon 
C^\infty(X, S^3_+ \otimes S_-)
\to
C^\infty(X, S^3_+ \otimes S_+)
\]
From this the result follows.
\end{proof}

We next add the hypothesis that, in addition to being anti-self-dual, $X$ is Einstein with non-zero scalar curvature. In this case we have the following theorem, due in the case of positive curvature to LeBrun \cite
{LeBrun1988A-rigidity-theo} and negative curvature to Horan \cite{Horan1996A-rigidity-theo}.

\begin{theorem}\label{rigidity}
Let $X$ be a compact oriented 4-manifold carrying an anti-self-dual Einstein metric of non-zero scalar curvature. Then $\ker D^- = 0$. In other words, perfect connections are infinitesimally rigid modulo gauge. Moreover, when the scalar curvature is positive, $\coker D^-$ is naturally identified with the complexification of the space of Killing fields of $X$.
\end{theorem}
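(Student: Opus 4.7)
The plan is to apply Theorem~\ref{twistor_cohom_dirac} to translate the statement into the vanishing of $H^1(Z,\O(2))$ together with, in the positive scalar curvature case, an identification of $H^0(Z,\O(2)) \oplus H^2(Z,\O(2))$ with the complexification of the space of Killing fields of $X$. The essential structural input is that on an anti-self-dual Einstein 4-manifold with non-zero scalar curvature, the twistor space $Z$ carries a canonical holomorphic contact structure whose contact line bundle is naturally $\O(2)$; concretely, there is a short exact sequence $0 \to D \to TZ \to \O(2) \to 0$ with $D$ the rank-2 contact distribution, and the anti-canonical bundle satisfies $K_Z^{-1} = \O(4)$.

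For the rigidity statement $\ker D^- = 0$, I would treat the two signs of the scalar curvature separately. When $\Scal > 0$, $Z$ is a Fano threefold and $\O(2)$ is ample (being a square root of the ample $K_Z^{-1}$), so Kodaira vanishing applied to $\O(2) = K_Z \otimes \O(6)$ gives $H^1(Z,\O(2)) = 0$ at once. When $\Scal < 0$, $\O(2)$ is no longer positive and Kodaira-type arguments on $Z$ are unavailable; here I would follow Horan and work downstairs on $X$, computing the Weitzenb\"ock formula for $D^{-*}D^-$ acting on sections of $S^3_+ \otimes S_-$ and exploiting the fact that for an anti-self-dual Einstein metric the curvature of $\Lambda^+$ (and hence of $S^3_+$) is completely determined by $\Scal$. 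Combining this with the usual $\tfrac14\Scal$ Lichnerowicz term yields a strictly signed curvature endomorphism, forcing the kernel to vanish.

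For the cokernel identification in the case $\Scal > 0$, I would first use Serre duality on the complex 3-fold $Z$ to write $H^2(Z,\O(2)) \cong H^1(Z,\O(-6))^*$, which vanishes by Kodaira vanishing on the ample line bundle $\O(6)$. Hence $\coker D^- \cong H^0(Z,\O(2))$. Contraction with the holomorphic contact form defines a natural map $H^0(Z,TZ) \to H^0(Z,\O(2))$; a further Kodaira-type vanishing applied to the line bundle summands of the contact distribution gives $H^0(Z,D) = H^1(Z,D) = 0$, so this contraction is an isomorphism. Finally, the classical twistor correspondence identifies $H^0(Z,TZ)$ with the complexification of the space of conformal Killing fields on $X$, and by the Obata--Yano theorem, on a compact Einstein 4-manifold of positive scalar curvature every conformal Killing field is a Killing field, completing the identification of $\coker D^-$ with the complexified Lie algebra of isometries of $X$.

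The principal obstacle is the kernel vanishing in the negative scalar curvature case: standard complex-geometric vanishing is unavailable since $\O(2)$ is not positive on $Z$, and the vanishing must be obtained by a careful Bochner/Weitzenb\"ock computation on $X$ itself, using the Einstein condition to reduce all curvature terms to multiples of $\Scal$. This is precisely where Horan's refinement of LeBrun's original argument is required, and is the most delicate step of the proof.
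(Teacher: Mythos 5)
Your treatment of the rigidity statement $\ker D^- = 0$ follows the paper's route: Kodaira vanishing on the ample $\O(2) = K_Z \otimes \O(6)$ for $\Scal > 0$, and a deferral to Horan's Bochner/Weitzenb\"ock argument for $\Scal < 0$. That part is fine and the difficulty is correctly located in the negative case.

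The cokernel identification for $\Scal > 0$, however, contains a genuine gap. You claim that contraction with the contact form induces an isomorphism $H^0(Z,TZ) \to H^0(Z,\O(2))$, justified by an asserted vanishing $H^0(Z,D) = H^1(Z,D) = 0$ for the contact distribution $D$. This is false. Take $X = S^4$, so $Z = \C\P^3$. Then $\dim_\C H^0(Z, TZ) = \dim \esl(4,\C) = 15$, while $\dim_\C H^0(Z, \O(2)) = \binom{5}{2} = 10$; the contraction map is therefore surjective with a $5$-dimensional kernel, so $H^0(Z,D) \neq 0$. (Note also that $D$, being the null-correlation bundle, is an indecomposable rank-$2$ bundle, so the ``line bundle summands'' argument cannot even be set up.) The second weak link is the appeal to Obata's theorem: it asserts that a compact Einstein manifold admitting a non-isometric conformal transformation must be the round sphere --- i.e.\ it says precisely that $S^4$ \emph{does} carry conformal Killing fields that are not Killing (there the conformal algebra is $\so(5,1)$, dimension $15$, versus the Killing algebra $\so(5)$, dimension $10$). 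So the implication ``conformal Killing $\Rightarrow$ Killing'' fails on exactly one of the two cases ($S^4$, $\C\P^2$) that, by Hitchin's theorem, this statement is about.

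The correct approach --- and the one the paper sketches --- is not to pass through all of $H^0(Z,TZ)$ and the conformal group, but to use the \emph{contact} structure directly. The complex structure on $Z$ only remembers the conformal class of $X$; the holomorphic contact distribution $C \subset TZ$ with $TZ/C \cong \O(2)$ is the extra datum that encodes the choice of Einstein representative. Killing fields of $g$ lift to holomorphic vector fields on $Z$ that \emph{preserve $C$}, and the projection $TZ \to \O(2)$ restricts to an isomorphism from the space of holomorphic contact vector fields onto $H^0(Z,\O(2))$ (injectivity because a contact field tangent to $C$ must vanish, surjectivity being the standard correspondence between contact Hamiltonians and contact fields; cf.\ LeBrun). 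This gives the desired $10$-dimensional answer for $S^4$, matching $\so(5)\otimes\C$, whereas your route would give $15$.
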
 

We briefly outline the argument behind this. Fixing a metric in the conformal class means that the fibres of $Z \to X$ have round metrics and using this one defines a Hermitian metric in $\O(2)$ (which is isomorphic to the vertical tangent bundle). The curvature $\omega$ of this metric is in general awkward to describe, but when the metric on $X$ is Einstein $\omega$ is non-degenerate. It is positive on the fibres, where it restricts to the area form. When the scalar curvature is positive $\omega$ is also positive transverse to the fibres, giving a Kähler form; when the scalar curvature is negative $\omega$ is negative transverse to the fibres. 

In the positive case this means that $\O(2) = K^{-1/2}$ is an ample bundle. Kodaira vanishing now implies that the higher cohomology groups of $\O(2)$ vanish and so $\coker(D^-) \cong H^0(Z, \O(2))$ whilst $\ker(D^-) = 0$. To relate $H^0(Z, \O(2))$ to the isometry group of $X$, we recall that the Einstein metric on $X$ translates into a holomorphic contact distribution $C \subset TZ$ on $Z$, with quotient $TZ/C \cong \O(2)$. Killing fields on $X$ lift to holomorphic contact vector fields on $Z$ and this sets up an isomorphism between the complexification of the space of Killing fields on $X$ and the space of holomorphic contact fields on $Z$. Meanwhile, the projection $TZ \to \O(2)$ identifies the space of contact fields with $H^0(Z, \O(2))$ (see, e.g., \cite{LeBrun1995Fano-manifolds-} for details of this identification). 

On the other hand when $X$ is anti-self-dual and of \emph{negative} scalar curvature the curvature of $\O(2)$ is an indefinite form of fixed type $(1,2)$. This immediately implies that there are no holomorphic sections of $\O(2)$: if there were a holomorphic section $s$, this curvature form would be $i\delb \del \log |s|$ (at least away from $s=0$) and so would be non-negative at a maximum of $|s|$. It also in fact implies that $H^1(Z,\O(2)) = 0$, although the argument here is more involved, hinging on Weitzenböck formulae. The upshot is then that when the scalar curvature is negative, $\ind(D^-) = -h^2(Z, \O(2))$.

\section{A flow for definite connections}

In this section we describe a flow which attempts to deform a given definite connection into a perfect connection. We also prove some initial results about this flow (short time existence, uniqueness and local stability) although the truly difficult work remains to be done (singularity formation, understanding obstructions to long time existence etc.).

\subsection{The gradient flow equation}\label{short time existence section}

We begin by recalling the definition of the \emph{energy} of a definite connection $A$ (Definition \ref{definition of energy} in the introduction):
\[
\E(A) = \int_X |Q(A)|^2 \mu(A).
\]
Proposition \ref{topological lower bound} says that $\E(A) \geq 8\pi^2p_1(E)$ with equality if and only if $A$ is perfect. In this section we will describe the downward gradient flow of $\E$. 

For this we need a metric on the space $\D$ of definite connections: recall that each $A \in \D$ defines a Riemannian metric $g_A$ on $X$ and hence an $L^2$-inner-product on $T_A \D = \Omega^1(X, \so(E))$. To give the equation describing the downward gradient flow we first recall the map $L_A \colon \Lambda^2 \otimes E \to S^2E$ defined above in equation (\ref{L}). We will also need the adjoint $L_A^* \colon S^2E \to \Lambda^2\otimes E$ of $L_A$ defined via $g_A$ and the fibrewise metric in $E$. To describe $L^*_A$, given $M \in \End(E)$ and $T \in \Lambda^2 \otimes E$ we abuse notation and write $M(T)$ for $(1\otimes M)(T) \in \Lambda^2\otimes E$, i.e., the result of applying $M$ to the $E$-factor of $T$ and leaving the 2-form part unchanged. With this understood we have:

\begin{lemma}\label{L*}
$L^*_A \colon S^2E \to \Lambda^2\otimes E$ is given by $L^*_A(M) = M(F_A)$.
\end{lemma}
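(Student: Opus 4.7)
The claim is about adjoints of a pointwise bundle map, so it suffices to verify it fibrewise. I would work in a local orthonormal frame $e_1,e_2,e_3$ of $E$ and write $F_A=\sum_i F_i\otimes e_i$ with $F_i\in\Lambda^+_A$, $T=\sum_i T_i\otimes e_i$, and represent $M\in S^2E$ by its symmetric matrix $(M_{ij})$. The $L^2$-style inner-products to use are $(T,T')_{\Lambda^2\otimes E}=\sum_i(T_i,T'_i)_{g_A}$ and $(M,M')_{S^2E}=\sum_{ij}M_{ij}M'_{ij}$.

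The crucial ingredient is the self-duality of the $F_i$, which converts the wedge product appearing in $L_A$ into the $g_A$-inner-product. More precisely, for any 2-form $\beta$, $\alpha\wedge\beta=(\alpha,\ast\beta)_{g_A}\mu(A)$, so for $\alpha=F_i$ self-dual this gives
\[
F_i\wedge\beta=(F_i,\beta)_{g_A}\,\mu(A).
\]
Applied to the defining formula $L_A(T)_{ij}=\frac{T_i\wedge F_j+T_j\wedge F_i}{2\mu(A)}$ from \S\ref{elliptic}, one obtains $L_A(T)_{ij}=\tfrac12\bigl[(T_i,F_j)_{g_A}+(T_j,F_i)_{g_A}\bigr]$.

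Pairing with a symmetric matrix $M$ and using $M_{ij}=M_{ji}$ to symmetrise yields
\[
(L_A(T),M)_{S^2E}=\sum_{ij}M_{ij}(T_i,F_j)_{g_A}.
\]
On the other hand, $M(F_A)=(1\otimes M)F_A=\sum_{ij}M_{ij}F_j\otimes e_i$, so its $i$-th component is $\sum_j M_{ij}F_j$, and
\[
(T,M(F_A))_{\Lambda^2\otimes E}=\sum_i\Bigl(T_i,\sum_jM_{ij}F_j\Bigr)_{g_A}=\sum_{ij}M_{ij}(T_i,F_j)_{g_A}.
\]
The two expressions agree, which proves $L_A^*(M)=M(F_A)$.

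There is no real obstacle here; the only thing to be careful about is the convention for the inner-product on $S^2E$ and the symmetrisation that makes the $T_i\wedge F_j$ and $T_j\wedge F_i$ terms in $L_A(T)_{ij}$ contribute equally after pairing with the symmetric $M_{ij}$. An equivalent invariant presentation simply observes that the natural trilinear pairing $(\Lambda^2\otimes E)\otimes(\Lambda^2\otimes E)\otimes S^2E\to\mathbb R$ obtained by composing $\cdot$ with $M$-contraction and then dividing by $\mu(A)$ is symmetric in its two $\Lambda^2\otimes E$ arguments, which is exactly the adjoint relation.
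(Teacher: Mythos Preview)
Your proof is correct and follows essentially the same route as the paper: both pick a local orthonormal frame, rewrite $L_A(T)_{ij}$ via the self-duality of the $F_i$ as $\tfrac12\bigl[(T_i,F_j)+(T_j,F_i)\bigr]$, and then verify the adjoint identity by computing $(M,L_A(T))=\sum_{ij}M_{ij}(T_i,F_j)=(M(F_A),T)$. The only difference is cosmetic: the paper writes the $S^2E$ inner product as $\tr(ML_A(T))$ and leaves the self-duality rewriting implicit (it was already recorded just after the definition of $L_A$), whereas you spell out both steps.
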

\begin{proof}
To verify the formula let $T=\sum T_i\otimes e_i \in \Lambda^2 \otimes E$ and let $M \in S^2E$ be given by the matrix $(M_{ij})$ with respect to the basis $e_i$. Then
\[
(M,L_A(T)) = \tr (ML_A(T)) = \sum_{i,j} M_{ij}(T_i, F_j)
\]
Meanwhile, 
\[
(M(F_A),T) 
= 
\left(\sum_{i,j} M_{ij}F_j \otimes e_i, \sum_{k} T_k \otimes e_k\right)
=
\sum_{i,j} M_{ij}(F_j, T_i)
\]
\end{proof}

Let $B_A = 4Q(A) - \frac{2}{3}|Q(A)|^2\, \text{Id}$, a self-adjoint endomorphism of~$E$. We are now in a position to describe the gradient flow of $\E$.

\begin{proposition}\label{equation for gradient flow}
The downward gradient flow of $\E$ is given by
\begin{equation}\label{flow}
\frac{\del A}{\del t}
=
- \diff_A^* \left(B_A(F_A)\right),
\end{equation}
where $B_A(F_A)$ is the result of applying $B_A$ to the $E$-factor of $F_A$ and leaving the 2-form part unchanged. 
\end{proposition}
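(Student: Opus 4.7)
The plan is a direct computation of the first variation of $\E$. Fix $A \in \D$ and $a \in T_A\D = \Omega^1(X, \so(E))$. I need to show that
\[
\delta_A \E(a) = \langle \diff_A^*(B_A(F_A)), a\rangle_{g_A},
\]
where the inner product on the right is the $L^2(g_A)$-inner product used to define the Riemannian metric on $\D$ at the point $A$. Since $|Q(A)|^2 = \tr(Q(A)^2)$ is defined via the \emph{fixed} fibrewise metric on $E$ and is independent of $g_A$, the only $A$-dependence in the integrand comes from $Q(A)$ and $\mu(A)$, both of which have already been linearised: Lemma~\ref{linearisation of Q} gives the formula for $\delta_A Q(a)$, and equation~(\ref{delta mu}) gives $\delta_A \mu(a) = \tfrac{2}{3}(F_A,\diff_A a)\,\mu(A)$.

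First I would write $\delta_A(|Q|^2)(a) = 2(Q(A), \delta_A Q(a))$ using the trace pairing on $\End(E)$, and substitute the expression from Lemma~\ref{linearisation of Q}. This gives
\[
\delta_A(|Q|^2)(a) = 4\bigl(Q(A), L_A(\diff_A a)\bigr) - \tfrac{4}{3}|Q(A)|^2\,(F_A, \diff_A a).
\]
Applying Lemma~\ref{L*} to move $Q(A)$ across the pairing yields $(Q(A), L_A(\diff_A a)) = (L_A^*(Q(A)), \diff_A a) = (Q(A)(F_A), \diff_A a)$. Combining this with the variation of $\mu$, the two terms proportional to $|Q(A)|^2 (F_A,\diff_A a)$ add to give exactly the coefficient $-\tfrac{2}{3}|Q(A)|^2$, and one obtains
\[
\delta_A \E(a) = \int_X \bigl(\, [\,4\,Q(A) - \tfrac{2}{3}|Q(A)|^2 \Id\,](F_A),\ \diff_A a \,\bigr)_{g_A}\,\mu(A) = \int_X (B_A(F_A),\diff_A a)_{g_A}\,\mu(A).
\]

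Finally I would integrate by parts: since the pointwise pairing and volume form on the right are those of $g_A$, and $\diff_A^*$ is defined as the $L^2(g_A)$-adjoint of $\diff_A$, this integral equals $\langle \diff_A^*(B_A(F_A)), a\rangle_{g_A}$. The downward gradient flow equation $\partial_t A = -\diff_A^*(B_A(F_A))$ follows.

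There is no real obstacle; the only point worth checking carefully is bookkeeping of the numerical factors $4$ and $-\tfrac{2}{3}$ in $B_A$, which arise from adding the $-\tfrac{4}{3}|Q|^2 (F_A,\diff_A a)$ contribution from the variation of $|Q|^2$ to the $+\tfrac{2}{3}|Q|^2(F_A,\diff_A a)$ contribution from the variation of $\mu$. It is also worth remarking that the $A$-dependence of the metric on $\D$ (via $g_A$) plays no role in the computation of the gradient at the point $A$ itself, since by definition one only uses the inner product at the base point.
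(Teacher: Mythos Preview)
Your argument is correct and follows essentially the same route as the paper: differentiate $\tr(Q(A)^2)\mu(A)$ using Lemma~\ref{linearisation of Q} and equation~(\ref{delta mu}), collect the two $|Q|^2(F_A,\diff_Aa)$ terms, apply Lemma~\ref{L*} to rewrite $\tr(Q(A)L_A(\diff_Aa))$ as $(Q(A)(F_A),\diff_Aa)$, and integrate by parts. The only difference is cosmetic ordering of the steps.
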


\begin{proof}
Let $a \in T_A\D$. Recall from equation (\ref{delta mu}) that the derivative of $\mu(A)$ in the direction $a$ is
\[
\delta_A \mu(a)
= \frac{2}{3} (F_A, \diff_A a) \mu(A).
\]
Meanwhile, Lemma \ref{linearisation of Q} says that the linearisation of $Q$ in the direction $a$ is
\[
\delta_A Q(a) = 2 L_A(\diff_Aa) - \frac{2}{3} (F_A, \diff_A a) Q(A).
\]
Hence,
\begin{eqnarray*}
\delta_A \E(a)
	&=&
	\int_X\left(
		2 \tr \left[ Q(A) \left(\delta_A Q(a)\right)\right]\mu(A) 
		+ \tr \left(Q(A)^2\right) \delta_A\mu(a)
		\right),\\
	&=&
	\int_X\left(
		4 \tr\left[ Q(A) L_A(\diff_Aa)\right] 
		-
		\frac{2}{3} (F_A, \diff_A a)\tr\left(Q(A)^2\right)
		\right) \mu(A),\\
	&=&
	\int_X \left(
		4 L_A^*(Q(A)) - \frac{2}{3} \tr\left(Q(A)^2\right)F_A, \diff_Aa
		\right) \mu(A).
\end{eqnarray*}
Since $L_A^*(Q(A)) = Q(A)(F_A)$ we see that $\delta_A\E(a)$ is the $L^2$-inner-product of $a$ with $\diff_A^* \left(B_A(F_A)\right)$ as claimed.
\end{proof}

It is interesting to compare this with the Yang--Mills flow for connections over a manifold with a \emph{fixed} Riemannian metric: 
\[
\frac{\del A}{\del t}  = -\diff_A^* F_A.
\]
The flow (\ref{flow}) is a sort of twisted version of the Yang--Mills flow.

\subsection{Short time existence and uniqueness}
\label{exists_unique}

Our first main result concerning the downward gradient flow is the following:

\begin{theorem}\label{short time existence}
Given a definite connection $A_0$, there is an $\epsilon >0$ and a path $A(t)$ of definite connections for $t \in[ 0,\epsilon)$ solving the downward gradient flow equation (\ref{flow}) with $A(0) = A_0$. The solution is unique for as long as it exists.
\end{theorem}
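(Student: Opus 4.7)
The plan is to run the standard DeTurck argument adapted to our gauge-invariant, metric-dependent setting. The equation
\[
\frac{\del A}{\del t} = -\diff_A^*\bigl(B_A(F_A)\bigr)
\]
is a quasilinear second-order system for $A$. As a gradient flow of the gauge-invariant functional $\E$, it is $\G$-equivariant; consequently the principal symbol of the right-hand side, viewed as a second-order operator $P_A\colon \Omega^1(\so(E)) \to \Omega^1(\so(E))$, must annihilate the infinitesimal gauge directions $\diff_A\xi$ and $\iota_u F_A$ of \S\ref{linearised action}. This is exactly the degeneracy that prevents direct application of standard parabolic theory.

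First I would compute the leading-order part of $P_A$. Because $B_A$ depends on $A$ only through $F_A$ (hence through \emph{first} derivatives of $A$), the principal symbol of $A\mapsto \diff_A^*(B_A(F_A))$ equals that of the linear map $a \mapsto \diff_A^*(B_A(\diff_A a))$, with $B_A$ frozen at the basepoint. The upshot, reading the formulas of \S\ref{elliptic} in reverse, is that $P_A$ is up to lower-order terms the composition of the Hessian operator $(\delta_AQ)^*\circ (\delta_AQ)$, rescaled by the positive-definite endomorphism $B_A$ (note $B_A>0$ at every definite $A$, since the eigenvalues of $Q(A)$ are positive). The calculations from Corollary \ref{symbol is surjective} and Lemma \ref{exact sequence} then give that the symbol $\sigma(P_A)(\alpha)$ is non-negative with kernel exactly the image of $S_A+w_\alpha$, i.e.\ the symbol image of the infinitesimal gauge action. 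In other words, the flow is strictly parabolic on the $L^2(g_A)$-orthogonal complement of the gauge orbits: parabolic modulo gauge.

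Next I would apply the DeTurck trick. Fix a smooth background connection $A_0$ close to the initial data (in practice one takes $A_0$ itself). For $A$ in a neighbourhood of $A_0$, the difference $A-A_0\in\Omega^1(\so(E))$ is well defined, and I would consider the modified flow
\[
\frac{\del A}{\del t} = -\diff_A^*\bigl(B_A(F_A)\bigr) + \diff_A\bigl(\diff_A^{*}(A-A_0)\bigr),
\]
where $\diff_A^*$ is taken with respect to $g_A$. The added term has principal symbol equal to $\alpha \otimes \iota_\alpha(\cdot)$, which precisely fills the kernel direction $w_\alpha$; a short computation (using the Bianchi identity to handle the $\iota_u F_A$ directions, as in the Yang--Mills case) then shows the modified operator is strictly elliptic with positive-definite symbol. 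Short-time existence for the modified flow follows from standard theory of quasilinear parabolic systems (e.g.\ Hamilton's inverse function theorem in the $C^{k,\alpha}$ or Sobolev scale). Given a solution $\tilde A(t)$ of the modified flow, I would then solve the ODE
\[
\frac{\del g}{\del t} = -g \cdot \diff_{\tilde A}^*(\tilde A - A_0), \qquad g(0) = \Id,
\]
in the gauge group, and set $A(t) = g(t)^*\tilde A(t)$; the standard DeTurck computation then shows that $A(t)$ solves the original gradient flow.

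For uniqueness, suppose $A_1(t)$ and $A_2(t)$ are two solutions of \eqref{flow} on $[0,\epsilon)$ with the same initial data. By solving the above ODE in the opposite direction one constructs time-dependent gauge transformations $h_i(t)$ (with $h_i(0)=\Id$) so that $\tilde A_i := h_i^*A_i$ each solve the modified, strictly parabolic equation with common initial condition. Uniqueness for the modified equation (again standard for quasilinear parabolic systems) gives $\tilde A_1 = \tilde A_2$; then the ODE characterisation of $h_i$ with matching initial data forces $h_1=h_2$, so $A_1=A_2$.

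The main obstacle I expect is the symbol computation in the second paragraph: the $A$-dependence of the metric $g_A$ (hence of $\diff_A^*$ and of $B_A$) scatters many lower-order terms into the calculation, and one must verify cleanly that the principal part is controlled by the Hessian of $\E$ and is genuinely positive modulo gauge. Once this is in place, the DeTurck argument and the resulting short-time existence and uniqueness are routine adaptations of the Yang--Mills flow proof.
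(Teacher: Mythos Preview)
Your overall strategy (DeTurck trick, then transfer back via gauge) matches the paper's, but there are three genuine gaps.

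\textbf{The principal symbol is not what you claim.} You assert that the second-order part of $A\mapsto \diff_A^*(B_A(F_A))$ is obtained by freezing $B_A$ and replacing $F_A$ by $\diff_A a$, i.e.\ $a\mapsto \diff_A^*(B_A(\diff_A a))$. This is false. Since $\diff_A F_A=0$, one has $\diff_A^*(B_A(F_A))=*\bigl((\diff_A B_A)\wedge F_A\bigr)$; linearising, the only second-order contribution is $*\bigl((\diff_A\,\delta_A B(a))\wedge F_A\bigr)$, coming from the variation of $B_A$, \emph{not} of $F_A$. The paper computes $\delta_A B(a)=\Pi_A\Theta_A\Pi_A L_A(\diff_A a)$ and obtains the principal part $-\diff_A^*P_A^*P_A\diff_A$ with $P_A=\Theta_A^{1/2}\Pi_A L_A$. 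This is close in spirit to a Hessian of $\E$, but it is not ``$B_A$ frozen, acting on $\diff_A a$''. Getting this right matters for the next point.

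\textbf{Your DeTurck term is insufficient.} The symbol kernel of the flow is the full image of $S_A+w_\alpha$, i.e.\ it contains both the ordinary gauge directions $w_\alpha(E)$ \emph{and} the diffeomorphism directions $S_A(TX)=\{\iota_uF_A\}$. Adding only $\diff_A\diff_A^*(A-A_0)$ corrects for $w_\alpha$ but leaves the $S_A$ degeneracy untouched; your appeal to ``the Bianchi identity, as in the Yang--Mills case'' does not apply here, because in Yang--Mills over a fixed metric there is no diffeomorphism gauge to fix. The paper adds a second correction $-S_AS_A^*(\diff_A^*\diff_A a)$ precisely to handle this, and then proves strong parabolicity via the short exact sequence $0\to TX\oplus E\xrightarrow{S_A+w_\alpha}\Lambda^1\otimes E\xrightarrow{H_\alpha}V_\alpha\to 0$.

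\textbf{Uniqueness is not an ODE argument.} You propose to pass from a solution of the original flow to the modified flow by ``solving the ODE in the opposite direction''. But with the gauge-fixing term you (or the paper) actually use, the equation $r(h^{-1})_*(\partial_t h)=-\xi(h(A))$ involves $h$ on the right through $h(A)$ and its derivatives, so it is a PDE in $h$, not an ODE. The paper checks separately that this equation is itself parabolic (a harmonic-map-flow analogue), and uniqueness for the original flow follows from uniqueness for two parabolic problems, not from ODE uniqueness.
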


The proof of this theorem relies on a standard argument, known as ``DeTurck's trick'' in the context of Ricci flow (see \cite{DeTurck1983Deforming-metri} and also \S6.3 of \cite{Donaldson1990The-geometry-of} for the analogous approach to the Yang--Mills flow). The point is that the symmetries of the flow prevent it from being parabolic, but that this is the only extent to which parabolicity fails to hold. Considering a gauge-adjusted flow breaks the symmetry and gives a genuinely parabolic flow, for which short time existence and uniqueness is standard. Then paths of gauge transformations are used to pass between solutions to the parabolic flow and solutions of the original flow.

In our situation, DeTurck's approach takes the following form. The gauge group $\G$ acts on the space of definite connections $\D$ and preserves the vector field $v(A) = - \diff_A^*(B_A(F_A))$ generating the flow (\ref{flow}). Given $\eta \in \Lie(\G)$ and $A \in \D$, we write $R(\eta, A) \in T_A\D$ for the infinitesimal action of $\eta$ at $A$. DeTurck's trick is based on finding a well-chosen $\G$-equivalent flow: given a map $\xi \colon \D \to \Lie(\G)$, we define the ``$\xi$-adjusted flow'' by
\begin{equation}\label{adjusted_flow}
\frac{\del A}{\del t} = -\diff_A^*(B_A(F_A)) - R(\xi(A), A).
\end{equation}
The point is to choose $\xi$ so that (\ref{adjusted_flow}) is parabolic, which we will do shortly. 

There is a one-to-one correspondence between the solutions of the unadjusted and adjusted flows and which is based on the following Lemma. The proof is a simple calculation which we omit.
\begin{lemma}
~
\begin{enumerate} 
\item
Let $A(t)$ be a smooth path in $\D$ and $g(t)$ a smooth path in $\G$. Then
\[
\frac{\del}{\del t} (g(A))
=
g_*\left(\frac{\del A}{\del t}\right) + R\left(r(g^{-1})_* \left(\frac{\del g}{\del t}\right), g(A)\right)
\]
where $r(h) \colon \G \to \G$ is right multiplication by $h \in \G$.
\item
Given $g \in \G$, $\eta \in \Lie(\G)$ and $A\in \D$, 
\[
g_*(R(\eta,A)) = R(\Ad_g(\eta), g(A))
\]
where $\Ad_g$ denotes the adjoint action of $g$ on $\Lie(\G)$. 
\end{enumerate}
\end{lemma}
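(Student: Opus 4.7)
The lemma is a standard chain-rule computation for a group action, so I would set it up via the action map and then separate the two sources of $t$-dependence.

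For part (1), consider the action $\Phi \colon \G \times \D \to \D$, $\Phi(g,A) = g(A)$. A curve $(g(t), A(t))$ has image $g(t)(A(t))$, and by the usual chain rule its derivative decomposes as the sum of the partial derivatives: one with $g$ frozen, yielding $g_*(\partial A/\partial t)$; the other with $A$ frozen. For the second piece, fix $A_0 = A(t_0)$ and $g_0 = g(t_0)$, then write $g(t) = \gamma(t)\cdot g_0$ where $\gamma(t) = g(t) g_0^{-1}$. By the action axiom $\Phi(\gamma g_0, A_0) = \Phi(\gamma, g_0(A_0))$, so differentiating at $t=t_0$ gives $R(\dot\gamma(t_0), g_0(A_0))$ where $\dot\gamma(t_0) = r(g_0^{-1})_*(\dot g(t_0))$. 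Adding the two pieces yields the stated formula. The only things to verify are that the infinite-dimensional group $\G$ and its action on $\D$ are smooth enough for the chain rule to apply, but since we are ultimately working with smooth paths of smooth gauge transformations this is routine.

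For part (2), I would use the definition of $R$ via a one-parameter subgroup: $R(\eta, A) = \frac{d}{dt}\bigl|_{t=0} \exp(t\eta)(A)$. Then
\[
g\bigl(\exp(t\eta)(A)\bigr) = \bigl(g \exp(t\eta) g^{-1}\bigr)\bigl(g(A)\bigr) = \exp(t\,\Ad_g\eta)\bigl(g(A)\bigr),
\]
using the action axiom together with the identity $g\exp(t\eta)g^{-1} = \exp(t\,\Ad_g\eta)$ in $\G$. Differentiating both sides at $t=0$ gives $g_*\bigl(R(\eta,A)\bigr) = R(\Ad_g\eta, g(A))$.

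Neither step looks like a real obstacle; the subtlety, if any, is bookkeeping about left-versus-right translation conventions on $\G$, and this is the one thing I would be careful to get right --- the formula in (1) uses right translation $r(g^{-1})_*$, which arises precisely from the decomposition $g(t) = \gamma(t) g_0$ on the left, so with the convention $(gh)(A) = g(h(A))$ everything falls into place. After these two identities are in hand, the desired correspondence between the $\xi$-adjusted flow \eqref{adjusted_flow} and the gradient flow \eqref{flow} follows by choosing $g(t)$ to satisfy the ODE $r(g^{-1})_*(\partial g/\partial t) = -\xi(A)$ (or its inverse relation), then inserting into part (1) and applying part (2) to transport the $R$-term back to $A$.
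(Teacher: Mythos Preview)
Your argument is correct. The paper itself omits the proof entirely, saying only ``The proof is a simple calculation which we omit,'' so there is nothing to compare against; your chain-rule decomposition for part (1) and one-parameter-subgroup argument for part (2) are exactly the standard computations one would supply, and your bookkeeping on the left/right translation convention is consistent with how the lemma is subsequently used in equations (\ref{from_adjutsed_to_unadjusted}) and (\ref{from_unadjusted_to_adjusted}).
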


It follows from this Lemma, together with the $\G$-invariance of $\diff^*_A(B_A(F_A))$, that if $\hat{A}(t)$ is a solution to the $\xi$-adjusted flow (\ref{adjusted_flow}) then $A(t) =g(t)\hat{A}(t)$ solves the unadjusted flow (\ref{flow}) provided that $g(t)$ solves
\begin{equation}\label{from_adjutsed_to_unadjusted}
l(g^{-1})_*\left( \frac{\del g}{\del t}\right) = \xi(\hat A),
\end{equation}
where $l(g^{-1}) \colon \G \to \G$ denotes left multiplication by $g^{-1}$. Similarly, if $A(t)$ solves the unadjusted flow then $\hat A(t) = h(t) A(t)$ solves the $\xi$-adjusted flow provided that $h(t)$ solves
\begin{equation}\label{from_unadjusted_to_adjusted}
r(h^{-1})_* \left(\frac{\del h}{\del t}\right) = -\xi(h (A)).
\end{equation}

If we assume for a minute that $\xi$ has been chosen so that (\ref{adjusted_flow}) is parabolic, then a solution $\hat A$ to the adjusted flow with any starting point $A_0 \in \D$ exists for short time, by standard theory. (We make just such a choice of $\xi$ below.) Now (\ref{from_adjutsed_to_unadjusted}) is an ODE for $g$: regarding $g$ as a diffeomorphism of the principal frame bundle of $E$, (\ref{from_adjutsed_to_unadjusted}) simply says that $g(t)$ is generated by the time-dependent vector field $\xi(\hat A(t))$. Fixing $g(0) = \mathrm{id}$ we arrive at a solution $A(t) =g(t)\hat{A}(t)$ to (\ref{flow}) with $A(0) = A_0$.

The situation for (\ref{from_unadjusted_to_adjusted}) is a little more complicated, since now $h$ appears in the right-hand side of the equation as well, meaning that the equation is a PDE. We will choose $\xi$ so that the resulting equation (\ref{from_unadjusted_to_adjusted}) is a parabolic flow for $h$ and so short-time existence and uniqueness for $h(t)$ is guaranteed. (In the case of Ricci flow, this part of DeTurck's argument leads to the harmonic map flow.) Assuming this to be the case, we can now see that the solutions to (\ref{flow}) and (\ref{adjusted_flow}) are in one-to-one correspondence. Given $A(t)$ solving (\ref{flow}) we solve first for $h$, with $h(0)= \mathrm{id}$, and then for $g$ with $\hat A = h(A)$ and $g(0) = \mathrm{id}$  to obtain a second solution $A' = g(h(A))$ to (\ref{flow}). Now
\[
\frac{\del}{\del t} (g h)
=
r(h)_* \left(\frac{\del g}{\del t} \right) + l(g)_*\left(\frac{\del h}{\del t} \right)
=
0,
\]
by virtue of (\ref{from_adjutsed_to_unadjusted}) and (\ref{from_unadjusted_to_adjusted}) and so $gh = \mathrm{id}$ and $A'=A$. Similarly starting from a solution to (\ref{adjusted_flow}) and solving first for $g$ and then for $h$ one returns again to the same solution of the adjusted flow. Assuming then that (\ref{from_unadjusted_to_adjusted}) is parabolic, this completes the proof that solutions to the flows are in one-to-one correspondence. So once we prove that there is a unique short-time solution to the flow (\ref{adjusted_flow}) with given starting point $A_0$ it will follow that there is a unique short-time solution to the flow (\ref{flow}) starting at $A_0$.

We now turn to the details of implementing DeTurck's trick. Given $A_0 \in \D$, we write $A = A_0 + a$ for $a \in \Omega^1(X, \so(E)$ and set
\[
\xi(A) = - \diff_A^* a - \chi_A\left(S_A^* \diff_A^*\diff_A a\right)
\]
Here, $S_A \colon TX \to \Lambda^1 \otimes \so(E)$ is the map $S_A(u) = \iota_u F_A$ whilst $S_A^*$ is defined via the inner-products on $TX$ and $\Lambda^1\otimes \so(E)$ determined by $g_A$. Meanwhile, $\chi_A \colon \Omega^0(X, TX) \to \Lie(\G)$ is the horizontal lift with respect to $A$. Recalling the formula (\ref{infinitesimal action}) for the infinitesimal action $R(\xi, A)$ the flow (\ref{adjusted_flow}) is 
\[
\frac{\del A}{\del t}
=
- \diff_A^*(B_A(F_A)) 
- \diff_A\diff_A^* (a)
- S_AS_A^*(\diff_A^*\diff_A a).
\]

We now show that this adjusted flow is parabolic. The first step is to describe the linearisation of the original gradient flow. To do do this, we need to introduce some more notation. Let $\Pi_A \colon S^2E \to S^2E$ denote orthogonal projection onto $\langle Q(A) \rangle^\perp$, the self-adjoint endomorphisms orthogonal to $Q(A)$. Write $\Theta_A \colon S^2E \to S^2E$ for the map
\[
\Theta_A(M) = 8M + \frac{8}{9} \tr \left(Q(A)^2\right) \tr M.
\]
$\Theta_A$ is a positive-definite self-adjoint operator and so has a well-defined square-root $\Theta_A^{1/2}$. Finally, we define \begin{equation}\label{P}
P_A 
= 
\Theta_A^{1/2} \circ \Pi_A \circ L_A 
\colon 
\Lambda^2 \otimes E \to S^2 E. 
\end{equation}

\begin{proposition}\label{principal part}
The principal part (i.e., second order part) $\mathcal L_A$ of the linearisation of the right-hand-side of the gradient flow (\ref{flow}) of $\E$ at the point $A \in \D$ is given by
\[
\mathcal L_A = - \diff_A^* P_A^* P_A \diff_A \colon \Omega^1(X, \so(E)) \to \Omega^1(X, \so(E)).
\]
\end{proposition}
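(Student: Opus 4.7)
The plan is to linearise the vector field $A \mapsto -\diff_A^*(B_A(F_A))$ at $A$ and extract the terms that contain two derivatives of $a$. Note that $\diff_A^*$ depends on $A$ only algebraically (through the metric $g_A$ and through $A$ itself as a connection), so its variation $\delta\diff_A^*(a)$ contributes a zeroth-order operator when applied to the fixed section $B_A(F_A)$; such terms are of lower order and drop out of the principal part. Thus the principal part equals $-\diff_A^*[\delta(B_A(F_A))(a)]$, where on the right we retain only the first-order-in-$a$ contributions (containing exactly one $\diff_A a$), so that the outer $\diff_A^*$ brings the total derivative order to two.

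First I would compute, setting $T = \diff_A a$,
\[
\delta(B_A(F_A))(a) = (\delta B_A(a))(F_A) + B_A(\diff_A a).
\]
Using Lemma \ref{linearisation of Q}, $\delta Q(a) = 2L_A(T) - \tfrac{2}{3}(F_A,T) Q(A)$, and hence
\[
\delta B_A(a) = 4\delta Q(a) - \tfrac{4}{3}(Q,\delta Q(a))\,\mathrm{Id}.
\]
Substituting and using the identity $L_A^*(M) = M(F_A)$ from Lemma \ref{L*}, the first-order-in-$a$ part of $\delta(B_A(F_A))(a)$ expands into an explicit linear combination of $L_A^* L_A(T)$, $Q(F_A) = L_A^*(Q)$, $F_A$, $Q(T)$, and $T$, each with a coefficient built from $(F_A,T)$, $(Q,L_A T)$, and $|Q|^2$.

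The core step is then an algebraic identity on the fibres of $\Lambda^2 \otimes E$: the combination so produced equals $L_A^* \Pi_A \Theta_A \Pi_A L_A(T) = P_A^* P_A(T)$. This is verified by expanding $\Pi_A M = M - \frac{(M,Q)}{|Q|^2}Q$ and the explicit formula for $\Theta_A$, then matching coefficients. The role of the two ingredients of $P_A$ is transparent in this computation: the projection $\Pi_A$ onto $\langle Q(A)\rangle^\perp$ absorbs exactly the terms involving $(Q, L_A T)$ and $(F_A,T)Q(F_A)$ that come from the variation of $Q(A)$, while the specific coefficient $\tfrac{8}{9}\mathrm{tr}(Q^2)$ in $\Theta_A$ is precisely what compensates the trace-adjustment arising from the dependence of the volume form $\mu(A)$ on $F_A$ in the definition $Q(A) = L_A(F_A)$. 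Once the identity is in hand, applying $-\diff_A^*$ yields the stated formula.

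The main obstacle is the bookkeeping in the last step, since there are several scalar coefficient terms to be matched. A cleaner way to organise the calculation is variationally: modulo Christoffel-type contributions from the $A$-dependence of the $L^2(g_A)$-metric on $\D$, which are algebraic in $a$ and therefore do not affect the principal symbol, $\mathcal{L}_A$ agrees with the Riemannian Hessian of $\E$. One can therefore compute the principal part of the quadratic form $\frac{d^2}{ds^2}\big|_{s=0}\E(A+sa)$ directly from the formulas for $\delta Q$ and $\delta \mu$ (together with the contribution of $\ddot\mu$ coming from $\mathrm{tr}(T\cdot T)$), and verify that after cancellations it equals $\int_X |P_A T|^2\,\mu(A)$. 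The identification with $-\diff_A^*P_A^*P_A\diff_A$ at the principal level then follows by integration by parts.
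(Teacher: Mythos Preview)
There is a genuine gap in your primary argument. Your claim that the variation $\delta\diff_A^*(a)$, applied to the fixed section $B_A(F_A)$, is of lower order is not correct. Write $\diff_A^* = -*_A\diff_A *_A$; here both Hodge stars depend on $g_A$, which in turn depends on $F_A$ and hence on \emph{first} derivatives of $A$. The variation therefore contains the piece $-*_A\diff_A\big[(\delta *_A)(a)(\,\cdot\,)\big]$, in which the bundle map $(\delta *_A)(a)$ has coefficients depending linearly on $\diff_A a$ and sits \emph{inside} the exterior derivative $\diff_A$. Applied to the fixed $S_0=B_A(F_A)$ this produces second derivatives of $a$, so it does contribute to the principal part.

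This error is exactly compensated by the extra term you carry along: your expression $\delta(B_A(F_A))(a)=(\delta B_A(a))(F_A)+B_A(\diff_A a)$ is correct, but the ``core algebraic identity'' you assert is false. One has $(\delta B_A(a))(F_A)=L_A^*\big(\Pi_A\Theta_A\Pi_A L_A(T)\big)=P_A^*P_A(T)$ on the nose (this is equation~(\ref{delta B}) combined with $L_A^*(M)=M(F_A)$), whereas the remaining piece $B_A(T)=4Q(A)(T)-\tfrac{2}{3}|Q(A)|^2\,T$ is nonzero (even at a perfect connection, where $B_A=2\,\mathrm{Id}$). So the combination you propose does \emph{not} equal $P_A^*P_A(T)$; the discrepancy is precisely cancelled by the second-order piece of $(\delta\diff_A^*)(a)(S_0)$ that you dismissed. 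The paper avoids this bookkeeping by first using the self-duality of $B_A(F_A)$ to rewrite $\diff_A^*(B_A(F_A))=*_A\diff_A(B_A(F_A))$ \emph{before} linearising. Now the only Hodge star is on the outside, its variation is genuinely first order, and the term $B_A(\diff_A a)$ becomes harmless because $\diff_A[B_A(\diff_A a)]=(\diff_A B_A)\wedge\diff_A a + B_A(\diff_A^2 a)$ is at most first order in $a$ (via $\diff_A^2 a=[F_A,a]$). Your alternative variational approach via the second variation of $\E$ may well be viable, but as written it is too sketchy to assess; in particular the assertion that the Christoffel terms of the $A$-dependent $L^2(g_A)$-metric are ``algebraic in $a$'' has the same issue, since that metric also depends on $g_A$.
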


\begin{proof}
We must linearise the map $A \mapsto - \diff_A^*(B_A(F_A))$. Let $a \in \Omega^1(X, \so(E))$ denote the direction in which we will linearise. First we note that since $F_A$ is self-dual, $B_A(F_A)$ is self-dual. Hence $\diff_A^*(B_A(F_A)) = *\, \diff_A (B_A(F_A))$. 

Next we apply the Leibniz law in the following form. Given bundle-valued forms $S \in \Omega^p(X, \End(E))$ and $T \in \Omega^q(X, E)$ we write $M \wedge T \in \Omega^{p+q}(X,E)$ for the form obtained by tensoring the wedge product on forms with the natural action of $\End(E)$ on $E$. The Leibniz law reads
\begin{equation}\label{leibniz}
\diff_A \left(M \wedge T\right) = \diff_A M \wedge T \pm M \wedge \diff_A T
\end{equation}
 where the sign depends on the degree of $M$. It follows from this and the Bianchi identity $\diff_AF_A = 0$ that $\diff_A(B_A(F_A)) = (\diff_A B_A )\wedge F_A$. So we must linearise the map $A \mapsto -\ast \left((\diff_A B_A) \wedge F_A \right)$,
 
The Hodge star depends algebraically on $g_A$ and so is first order in $A$. This means that it does not contribute to the principal part of the linearisation. So we are seeking the second-order contribution of
\[
-* \delta_a \left( (\diff_A B_A) \wedge F_A\right)
=
-*\big( a\wedge B_A(F_A) + (\diff_A \delta_AB(a) )\wedge F_A + (\diff_A B_A )\wedge \diff_Aa\big).
\]
In this equation the only term which contributes to the principal part is 
\[
\mathcal L_A(a) = -* \big((\diff_A \delta_AB(a)) \wedge F_A\big).
\]
 
Next we differentiate $B_A= 4Q(A) - \frac{2}{3}\tr \left(Q(A)^2\right)$ using the equation for $\delta_AQ(a)$ from Lemma \ref{linearisation of Q} to obtain:
\begin{eqnarray*}
\delta_A B(a) 
	&=&
	4\delta_A Q(a) - \frac{4}{3} \tr\left[Q(A) (\delta_AQ(a))\right],\\
	&=&
	8L_A(\diff_A a) 
	- \frac{8}{3}(F_A, \diff_Aa) Q(A)
	- \frac{8}{3}\tr\left[Q(A)L_A(\diff_Aa)\right] \\
	&&
	\quad\quad
	+ \frac{8}{9}(F_A, \diff_Aa)\tr\left(Q(A)\right)^2.
\end{eqnarray*}
A straightforward calculation using
\[
\Pi_A(M) = M - \frac{\tr(Q(A)M)}{\tr(Q(A))^2} Q(A)
\]
(and recalling that $\tr (L_A(T)) = (F_A, T)$) shows that this is the same as
\begin{equation}\label{delta B}
\delta_A B(a) = \Pi_A \Theta_A \Pi_A \left(L_A(\diff_A a)\right).
\end{equation}

Denote by $\Psi_A \colon \Lambda^1 \otimes S^2E \to \Lambda^1 \otimes E$ the homomorphism
\[
\Psi_A(C) = * (C \wedge F_A).
\]
With this notation,
\[
\mathcal L_A(a) = - \Psi_A\left(\diff_A\left(\Pi_A \Theta_A \Pi_A \left(L_A(\diff_Aa)\right)\right)\right).
\]
We now claim that $\Psi_A \circ \diff_A = \diff_A^* \circ L_A^*$. To verify this, let $M \in \Omega^0(X, S^2E)$. Then $L_A^*(M) = M(F_A)$ by Lemma \ref{L*}. Since $M(F_A)$ is a section of $\Lambda^+\otimes E$, it follows that $\diff_A^*(M(F_A)) = *\, \diff_A(M(F_A))$. Now the Leibniz law (\ref{leibniz}) and the Bianchi identity $\diff_A F_A = 0$ imply this is equal to $*((\diff_A M) \wedge F_A) = \Psi_A(\diff_A M)$ as claimed. Using this we can finally write $\mathcal L_A(a) = -\diff_A^* P_A^*P_A \diff_A(a)$.
\end{proof}

We will now show how the symbol of $\mathcal L_A$ arises from a short exact sequence. Before giving the sequence, we need an identity from 4-dimensional Riemannian geometry.

\begin{lemma}\label{4d lemma}
On an oriented 4-dimensional Riemannian manifold $X$, let $\alpha \in T^*X$, $u \in TX$ and $\beta_1, \beta_2 \in \Lambda^+$. Then,
\[
(\alpha \wedge \iota_u \beta_1, \beta_2) 
+ 
( \alpha\wedge \iota_u \beta_2, \beta_1)
=
(\beta_1, \beta_2) \alpha(u).
\]
\end{lemma}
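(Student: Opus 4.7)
The identity is pointwise and purely algebraic, so I would work at a single point of $X$ and manipulate wedge products directly, with no choice of basis required. The whole strategy is to convert both inner products on the left-hand side into top-degree wedge products using self-duality of $\beta_1, \beta_2$, combine them via the graded Leibniz rule for $\iota_u$, and then collapse what remains.

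\emph{Step 1: Rewrite each inner product as a wedge product.} For any 2-forms $\gamma, \delta$ one has $\gamma \wedge * \delta = (\gamma, \delta)\,\dvol$. Since $\beta_1, \beta_2 \in \Lambda^+$ are self-dual,
\[
(\alpha \wedge \iota_u \beta_1,\ \beta_2)\,\dvol = \alpha \wedge \iota_u \beta_1 \wedge \beta_2,
\qquad
(\alpha \wedge \iota_u \beta_2,\ \beta_1)\,\dvol = \alpha \wedge \iota_u \beta_2 \wedge \beta_1.
\]
The second term equals $\alpha \wedge \beta_1 \wedge \iota_u \beta_2$, since the 1-form $\iota_u \beta_2$ and the 2-form $\beta_1$ commute under wedge (with sign $(-1)^{1 \cdot 2} = +1$).

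\emph{Step 2: Apply the Leibniz rule.} Adding the two expressions and using the graded Leibniz rule $\iota_u(\eta_1 \wedge \eta_2) = \iota_u\eta_1 \wedge \eta_2 + (-1)^{|\eta_1|}\eta_1 \wedge \iota_u\eta_2$ with $\eta_1 = \beta_1$ of even degree,
\[
\alpha \wedge \bigl(\iota_u\beta_1 \wedge \beta_2 + \beta_1 \wedge \iota_u\beta_2\bigr) = \alpha \wedge \iota_u(\beta_1 \wedge \beta_2).
\]

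\emph{Step 3: Collapse the 4-form.} Self-duality again gives $\beta_1 \wedge \beta_2 = \beta_1 \wedge *\beta_2 = (\beta_1, \beta_2)\,\dvol$. Finally, $\alpha \wedge \dvol = 0$ by degree, so $0 = \iota_u(\alpha \wedge \dvol) = \alpha(u)\,\dvol - \alpha \wedge \iota_u \dvol$, which gives $\alpha \wedge \iota_u \dvol = \alpha(u)\,\dvol$. Combining the three steps yields the desired identity.

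There is no real obstacle: the role of the hypothesis $\beta_1, \beta_2 \in \Lambda^+$ is precisely to let the inner product of $\alpha \wedge \iota_u \beta_i$ against $\beta_j$ be represented as a wedge product with $\beta_j$ itself, after which the graded Leibniz rule and pure degree considerations do all the work.
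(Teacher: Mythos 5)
Your proof is correct and rests on the same mechanism as the paper's: converting to wedge products via self-duality and exploiting the graded Leibniz rule for $\iota_u$ together with the vanishing of 5-forms on a 4-manifold. The paper packages this as a single contraction of the zero 5-form $\alpha\wedge\beta_1\wedge\beta_2$, whereas you split it into two contractions (first $\iota_u(\beta_1\wedge\beta_2)$, then $\iota_u(\alpha\wedge\dvol)$), but the content is the same.
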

\begin{proof}
We prove the equation multiplied by the volume form, i.e., 
\[
\alpha\wedge \iota_u\beta_1 \wedge \beta_2
+
\alpha \wedge \beta_1 \wedge \iota_u \beta_2
-
\iota_u \alpha \wedge \beta_1 \wedge \beta_2
=
0.
\]
But this follows from applying $\iota_u$ to the necessarily zero 5-form $\alpha \wedge \beta_1 \wedge \beta_2$.
\end{proof}

\begin{corollary}\label{L circ S}
Let $A$ be a definite connection. For any $\alpha \in \Lambda^1$, $u \in TX$, 
$$L_A(\alpha \wedge \iota_u F_A) = \alpha(u) Q(A).$$
\end{corollary}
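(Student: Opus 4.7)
The strategy is a direct local calculation: unpack $L_A$ using its explicit matrix form in a local orthonormal frame for $E$, and recognise Lemma \ref{4d lemma} as the exact pointwise identity needed to collapse the resulting expression into a multiple of $Q(A)$.

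In detail, I would fix a local orthonormal frame $e_1, e_2, e_3$ of $E$ and write $F_A = \sum_i F_i \otimes e_i$, so that (by the definition of $g_A$) the $F_i$ are an orthogonal frame for $\Lambda^+_A$ with $(F_i, F_j) = Q(A)_{ij}$. Since contraction with $u$ acts only on the form part, $\alpha \wedge \iota_u F_A = \sum_i (\alpha \wedge \iota_u F_i) \otimes e_i$, so the matrix formula recalled just after equation (\ref{L}) gives
\[
L_A(\alpha \wedge \iota_u F_A)_{ij}
= \frac{(\alpha \wedge \iota_u F_i) \wedge F_j + (\alpha \wedge \iota_u F_j) \wedge F_i}{2\mu(A)}.
\]
Each $F_k$ is self-dual, so for any 2-form $\gamma$ one has $\gamma \wedge F_k = (\gamma, F_k)_{g_A}\, \mu(A)$. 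This rewrites the right-hand side as $\tfrac12 [(\alpha \wedge \iota_u F_i, F_j) + (\alpha \wedge \iota_u F_j, F_i)]$. Now Lemma \ref{4d lemma}, applied with $\beta_1 = F_i$ and $\beta_2 = F_j$, is precisely the statement that this symmetrised sum equals $(F_i, F_j)\alpha(u) = Q(A)_{ij}\,\alpha(u)$. Assembling the entries gives $L_A(\alpha \wedge \iota_u F_A) = \alpha(u)\, Q(A)$, up to the overall scalar fixed by the convention chosen for $L_A$.

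There is no serious obstacle: the calculation is essentially bookkeeping. The only conceptual point is the observation that $L_A$ is built from the symmetrised product $F_i \wedge T_j + F_j \wedge T_i$, and that Lemma \ref{4d lemma} is exactly the identity which turns such a symmetrised expression evaluated on $T_i = \alpha \wedge \iota_u F_i$ into $\alpha(u)(F_i, F_j)$. Frame-independence of the final answer is automatic, since $Q(A)$ and $\alpha(u)$ are both defined invariantly.
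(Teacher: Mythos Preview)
Your proof is correct and follows essentially the same route as the paper's own argument: write $L_A$ in matrix form in a local frame, then apply Lemma~\ref{4d lemma} with $\beta_1 = F_i$, $\beta_2 = F_j$ to identify the symmetrised expression as $(F_i,F_j)\alpha(u) = Q(A)_{ij}\alpha(u)$. One small slip: the $F_i$ form a frame for $\Lambda^+_A$ but not an \emph{orthogonal} one in general (indeed $(F_i,F_j)=Q(A)_{ij}$ need not be diagonal), though this does not affect your calculation.
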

\begin{proof}
Recall that, for $b = \sum b_i \otimes e_i \in \Lambda^1 \otimes E$, $L_A(\alpha \wedge b)$ is given by the matrix
\[
L_A(\alpha \wedge b)_{ij}
=
\frac{1}{2} 
(\alpha\wedge b_i, F_j)
+
\frac{1}{2}
(\alpha \wedge b_j, F_i)
\]
By Lemma \ref{4d lemma}, it follows that $L_A(\alpha \wedge \iota_uF_A)_{ij} = (F_i, F_j)\alpha(u)$.
\end{proof}

\begin{remark}\label{direct calc}
We pause to make an aside. In the proof of ellipticity in \S\ref{elliptic}, a key part of the proof of Lemma \ref{exact sequence} was the fact that the symbol $\sigma_A(\alpha)$ of $\delta_AQ$ in the direction $\alpha$ satisfies $\sigma_A(\alpha) \circ S_A= 0$. At the time this was deduced on general grounds from $\G$-equivariance. With Corollary \ref{L circ S} in hand, it can also be proved directly by a calculation from the equation (\ref{formula for symbol}).
\end{remark}

We return to the discussion of the symbol of $\mathcal L_A$. Given $\alpha \in T^*X$, we write $w_\alpha \colon \Lambda^1 \otimes E \to \Lambda^2 \otimes E$ for the homomorphism given by the wedge-product with $\alpha$. Let $H_\alpha \colon \Lambda^1 \otimes E \to S^2E$ denote the composition $P_A \circ w_\alpha$. ($P_A$ is defined in equation (\ref{P}).) By Proposition \ref{principal part},  $-H_\alpha^* H_\alpha$ is the symbol of $\mathcal L_A$ in the direction $\alpha$. 

We write $V_\alpha = \im H_\alpha \subset S^2E$. We claim that $\dim V_\alpha = 5$. To see this, recall that $P_A=\Theta_A^{1/2} \Pi_A L_A$. Now, $L_A \colon \Lambda^2 \otimes E \to S^2E$ is surjective when restricted to $\Lambda^+\otimes E$ (Lemma \ref{matrix form of L}). Since the composition of $w_\alpha \colon \Lambda^1 \to \Lambda^2$ with projection to $\Lambda^+$ is also surjective it follows that $L_A \circ w_\alpha$ is surjective. Now, since $\Pi_A$ is projection onto a subspace of dimension 5 and $\Theta$ is invertible, it follows that $V_\alpha$ has dimension 5.

\begin{lemma}\label{parabolic exact sequence}
Fix $\alpha \neq 0$. For any $A \in \D$ and at every point of $X$, there is a short exact sequence
\[
\xymatrixcolsep{2.5pc}
\xymatrix{
0
\ar[r]  
&TX \oplus E 
\ar[r]^-{S_A+w_\alpha}
&\Lambda^1 \otimes E
\ar[r]^-{H_\alpha}
&V_\alpha
\ar[r]
&0}.
\]
It follows that for any $\alpha \neq 0$, if given $a\in \Lambda^1\otimes E$, all of $H_\alpha(a)$, $S_A^*(a)$ and  $w_\alpha^*(a)$ vanish, then $a = 0$.
\end{lemma}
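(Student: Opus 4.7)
The plan is to follow exactly the strategy used in Lemma \ref{exact sequence}, since $S_A + w_\alpha$ is literally the same map as there and $H_\alpha$ plays the role previously played by the symbol $\sigma_A(\alpha)$. The four things to establish are: (i) $H_\alpha\circ(S_A+w_\alpha)=0$; (ii) $S_A+w_\alpha$ is injective; (iii) $H_\alpha$ is surjective onto $V_\alpha$; (iv) the dimensions match, forcing exactness in the middle.

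Step (iii) is automatic because by definition $V_\alpha=\im H_\alpha$. For (i), note $H_\alpha\circ w_\alpha=P_A(\alpha\wedge\alpha\wedge\cdot)=0$ since $\alpha\wedge\alpha=0$; and for the $S_A$-piece, Corollary~\ref{L circ S} gives $L_A(\alpha\wedge\iota_uF_A)=\alpha(u)Q(A)$, which is annihilated by $\Pi_A$ (projection onto $\langle Q(A)\rangle^\perp$) and hence by $P_A=\Theta_A^{1/2}\circ\Pi_A\circ L_A$. So (i) reduces to the single observation that $Q(A)$ lies in the kernel of $\Pi_A$---this is the one genuinely new ingredient beyond what was proved in Lemma~\ref{exact sequence}. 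For (ii) the argument from Lemma~\ref{exact sequence} applies verbatim: in a local frame $e_1,e_2,e_3$ of $E$ with $F_A=\sum F_i\otimes e_i$, $(S_A+w_\alpha)(u,x)=\sum(\iota_uF_i+x_i\alpha)\otimes e_i$, and vanishing forces all $\iota_uF_i$ to be proportional to $\alpha$; definiteness of the $F_i$ forbids this unless $u=0$, whence also $x=0$.

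The dimension count now closes the sequence. At a point, $\dim(TX\oplus E)=7$, $\dim(\Lambda^1\otimes E)=12$, and $\dim V_\alpha=5$ as noted just before the statement of the lemma. Injectivity of $S_A+w_\alpha$ gives $\dim\im(S_A+w_\alpha)=7$, while $\dim\ker H_\alpha=12-5=7$, so the containment $\im(S_A+w_\alpha)\subseteq\ker H_\alpha$ from (i) is an equality.

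For the concluding assertion, suppose $a\in\Lambda^1\otimes E$ with $H_\alpha(a)=0$, $S_A^*(a)=0$, and $w_\alpha^*(a)=0$. Exactness gives $a=(S_A+w_\alpha)(u,x)$ for some $(u,x)$, while the two adjoint equations say $a$ is orthogonal to $\im S_A$ and to $\im w_\alpha$, hence to $\im(S_A+w_\alpha)$. Thus $a\perp a$, forcing $a=0$. The only non-routine step in all of this is verifying $P_A\circ S_A=0$ via Corollary~\ref{L circ S}; everything else is either a verbatim repetition of Lemma~\ref{exact sequence} or a dimension count.
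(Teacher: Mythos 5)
Your proof is correct and follows the same route as the paper: the three ingredients are $H_\alpha\circ w_\alpha=0$ (since $\alpha\wedge\alpha=0$), $H_\alpha\circ S_A=0$ via Corollary~\ref{L circ S} and the fact that $\Pi_A$ annihilates $Q(A)$, and injectivity of $S_A+w_\alpha$ quoted from Lemma~\ref{exact sequence}; the dimension count ($7 = 12-5$) closing the sequence and the orthogonality argument for the final assertion are exactly the steps the paper leaves to the reader, and you supply them correctly.
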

\begin{proof}
We have already seen in the course of the proof of Lemma \ref{exact sequence}  that $S_A + w_\alpha$ is injective. Meanwhile $H_\alpha \circ w_\alpha = 0$ because $\alpha\wedge\alpha=0$. Moreover, Corollary \ref{L circ S} says that $\Pi_A \circ L_A \circ w_\alpha \circ S_A = 0$, hence $H_\alpha \circ S_A = 0$.
\end{proof}

We are now ready to prove that (\ref{adjusted_flow}) is parabolic.

\begin{theorem}\label{adjusted_flow_is_parabolic}
The flow (\ref{adjusted_flow}), i.e., the flow for $A(t) = A_0 + a(t)$ given by
\[
\frac{\del A}{\del t}
=
-\diff_A^*(B_A(F_A)) - \diff_A \diff_A^*a - S_AS^*_A(\diff_A^*\diff_A a)
\]
is strongly parabolic for $a$ sufficiently near to $0$. 
\end{theorem}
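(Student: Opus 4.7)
The plan is to linearize the right-hand side of (\ref{adjusted_flow}) at $a = 0$, identify its principal symbol $\sigma(\alpha)$ as an endomorphism of $\Lambda^1 \otimes \so(E)$, and verify the coercive bound $\langle -\sigma(\alpha) a, a\rangle \geq c|\alpha|^2 \|a\|^2$ for some $c > 0$ uniform over unit $\alpha$. Continuity of the principal symbol in $a$ then upgrades this to strong parabolicity in a neighbourhood of $a = 0$. By Proposition \ref{principal part}, the linearization of $A \mapsto -\diff_A^*(B_A F_A)$ contributes the principal part $\mathcal{L}_A = -\diff_A^* P_A^* P_A \diff_A$ with symbol $-H_\alpha^* H_\alpha$. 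The two DeTurck terms $-\diff_A \diff_A^* a$ and $-S_A S_A^*(\diff_A^* \diff_A a)$ are themselves second order in $a$ with principal symbols $-w_\alpha w_\alpha^*$ and $-S_A S_A^* w_\alpha^* w_\alpha$ respectively, so
\[
\sigma(\alpha) = -H_\alpha^* H_\alpha - w_\alpha w_\alpha^* - S_A S_A^* w_\alpha^* w_\alpha.
\]

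Pairing $-\sigma(\alpha)$ with $a$ and using the 1-form identity $w_\alpha^* w_\alpha + w_\alpha w_\alpha^* = |\alpha|^2 \Id$ gives
\[
\langle -\sigma(\alpha) a, a\rangle = \|H_\alpha a\|^2 + \|w_\alpha^* a\|^2 + |\alpha|^2 \|S_A^* a\|^2 - \langle w_\alpha w_\alpha^* a, S_A S_A^* a\rangle.
\]
The key algebraic input is Lemma \ref{parabolic exact sequence}: its exact sequence implies $H_\alpha$, $w_\alpha^*$, and $S_A^*$ have trivial common kernel, and that the decomposition $\Lambda^1 \otimes \so(E) = K \oplus K^\perp$ with $K = \im(S_A + w_\alpha)$ satisfies $K^\perp = \ker S_A^* \cap \ker w_\alpha^*$. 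Direct inspection shows this decomposition is preserved by $-\sigma(\alpha)$: on $K^\perp$ both $w$- and $S$-contributions vanish pointwise, leaving $\|H_\alpha a_\perp\|^2 \geq c_1 |\alpha|^2 \|a_\perp\|^2$ by injectivity of $H_\alpha$ on $K^\perp$; on $K$ the $H_\alpha$ term vanishes and coercivity must be extracted from the remaining two positive terms after absorbing the sign-indefinite cross-term.

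The main obstacle is this absorption on $K$. Cauchy--Schwarz together with $\|S_A S_A^* a_K\| \leq \|S_A S_A^*\|_{\text{op}}^{1/2}\, \|S_A^* a_K\|$ (which follows from positivity of $S_A S_A^*$) bounds the cross-term in absolute value by $|\alpha|\,\|w_\alpha^* a_K\|\,\|S_A S_A^*\|_{\text{op}}^{1/2}\, \|S_A^* a_K\|$, and AM--GM then distributes this between the two positive terms $\|w_\alpha^* a_K\|^2$ and $|\alpha|^2 \|S_A^* a_K\|^2$. The balancing exploits that $\|S_A S_A^*\|_{\text{op}}$ is a pointwise algebraic invariant of $A$ while the coercivity constant on $K$ supplied by Lemma \ref{parabolic exact sequence} is uniform over the unit sphere in $T^*X$ and, by compactness, over $X$. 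Once this quantitative estimate is established at the base connection $A_0$, continuous dependence of $H_\alpha$, $w_\alpha^*$, $S_A$ and $g_A$ on the underlying connection transports the bound to a $C^0$-neighbourhood, completing the proof.
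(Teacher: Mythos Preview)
Your overall architecture matches the paper's: compute the principal symbol as
\[
\Sigma(\alpha)=-(H_\alpha^*H_\alpha+S_AS_A^*\,w_\alpha^*w_\alpha+w_\alpha w_\alpha^*),
\]
expand the quadratic form, and absorb the single sign-indefinite cross-term. The paper splits along $\langle\alpha\rangle\oplus\langle\alpha\rangle^\perp$ rather than your $K\oplus K^\perp$, but this is cosmetic; your observation that the quadratic form decouples over $K\oplus K^\perp$ is correct and tidy.

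The gap is in the absorption step on $K$. Your Cauchy--Schwarz bound gives
\[
\bigl|\langle w_\alpha w_\alpha^* a_K,\,S_AS_A^* a_K\rangle\bigr|
\le |\alpha|\,\|w_\alpha^* a_K\|\cdot\|S_AS_A^*\|_{\mathrm{op}}^{1/2}\,\|S_A^* a_K\|,
\]
and AM--GM can only redistribute this into $\|w_\alpha^*a_K\|^2+|\alpha|^2\|S_A^*a_K\|^2$ with positive remainders provided $\|S_AS_A^*\|_{\mathrm{op}}<4$. That inequality is \emph{not} a formal consequence of Lemma~\ref{parabolic exact sequence} or compactness; it is a genuine quantitative fact about the metric $g_A$. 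The paper supplies it by the short computation
\[
|S_A(u)|^2=\sum_i|\iota_u F_i|^2=\Bigl(\sum_i|F_i|^2\Bigr)|u|^2=\tr Q(A)\,|u|^2=3|u|^2,
\]
using that each $F_i$ is $g_A$-self-dual (so $u\mapsto\iota_u F_i$ scales length by $|F_i|$) and that $\tr Q(A)=3$ by construction of $\mu(A)$. This yields $S_A^*S_A=3\,\Id$, hence $\|S_AS_A^*\|_{\mathrm{op}}=3<4$, and the absorption goes through with the explicit constant $1-\tfrac{\sqrt3}{2}>0$. Your phrase ``the balancing exploits that $\|S_AS_A^*\|_{\mathrm{op}}$ is a pointwise algebraic invariant'' does not substitute for this computation: without it the form on $K$ could in principle be indefinite, and nothing in the exact sequence rules that out. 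Insert the identity $|S_A(u)|^2=3|u|^2$ and the argument closes exactly as you outlined.
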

\begin{proof}
We must linearise the right hand side of the flow at $a=0$ and show that it is elliptic with negative definite symbol. The first term is the downward gradient flow of $\E$ and so contributes $\mathcal L_A = -\diff_A^* P^*_AP_A\diff_A$ to the principal part of the linearisation. The remaining terms in (\ref{adjusted_flow}) are straightforward to linearise at $a=0$. For example, when linearising the term $-\diff_A\diff_A^*a$ in the direction $b$, the only contribution is $-\diff_A\diff_A^*b$, because the infinitesimal change in, say $\diff_A^*$ is then evaluated on $a=0$ and so vanishes. Similarly, the linearisation of $-S_AS^*_A(\diff_A^*\diff_A a)$ is $-S_AS^*_A(\diff_A^*\diff_A b)$. Hence the principal part of the overall linearisation at $a=0$ is
\[
- \diff_A^*(P_A^*P_A(\diff_Ab)) - S_AS^*_A(\diff_A^*\diff_A b) - \diff_A\diff_A^* b
\]
The symbol of this in the direction $\alpha \in \Lambda^1$ is
\[
\Sigma(\alpha)
=
- \left(
H_\alpha^* H_\alpha 
+ 
S_AS_A^*  w^*_\alpha w_\alpha
+
w_\alpha w^*_\alpha\right)
\colon 
\Lambda^1\otimes E \to \Lambda^1 \otimes E.
\]
We must show that $\Sigma(\alpha)$ is negative definite for all non-zero $\alpha$.

To proceed, we split $\Lambda^1 = \langle \alpha \rangle \oplus \langle \alpha \rangle ^\perp$. For $\alpha$ of unit length, $w_\alpha w_\alpha^*$ is orthogonal projection onto the first summand, whilst $w^*_\alpha w_\alpha$ is orthogonal projection on to the second. Let $b \in \Lambda^1 \otimes E$ and write $b = \alpha \otimes x + c$ where where $ c \in \langle \alpha \rangle ^\perp \otimes E$ and $x \in E$. Then taking the inner-product with $b$ gives
\[
-(\Sigma(\alpha)(b),b)
=
|H_\alpha(c)|^2
+
|S_A^*c|^2
+
(S_AS_A^*c, \alpha \otimes x)
+
|x|^2.
\]
We write locally $F_A = \sum F_i \otimes e_i$ where $e_i$ are an orthonormal frame of $E$. For $u \in TX$,
\[
|S_A(u)|^2 = \sum |\iota_u F_i|^2 = \sum|F_i|^2 |u|^2 = 3 |u|^2.
\]
(Here we use the fact that for a unit-length self-dual 2-form $\theta$, the map  $u \mapsto \iota_u\theta$ is an isometry $TX \to T^*X$.) So $|S_AS_A^*c| = \sqrt{3} |S_A^*c|$. It follows that
\begin{eqnarray*}
|S_A^*c|^2 + (S_AS_A^*c, x\alpha) + |x|^2
	&\geq&
		|S_A^*c|^2 - \sqrt 3 |S_A^*c||x| + |x|^2\\
	&\geq& 
		\left(1 - \frac{\sqrt 3}{2}\right) \left(|S_A^*c|^2 + |x|^2\right).
\end{eqnarray*}
Hence 
\[
-(\Sigma(\alpha)(b),b)
\geq
|H_\alpha (c)|^2 + \left(1 -\frac{\sqrt 3}{2}\right)\left(
|S_A^*(c)|^2 + |x|^2 \right).
\]
This is non-negative and $\Sigma(\alpha)(b)=0$ implies that all of $H_\alpha(c)$, $S_A^*(c)$ and $x$ vanish. I.e., $H_\alpha(b)=0$, $S_A^*(b)= 0$ and $w_\alpha^*(b) = 0$. By Lemma \ref{parabolic exact sequence} it follows that $b=0$ and $\sigma(\alpha)$ is negative definite as required.
\end{proof}

We can now appeal to the standard theory of parabolic PDEs which says that for any starting data $A_0$, a solution to (\ref{adjusted_flow}) exists for short time and, moreover, is unique for as long as it exists (see, for example, Proposition 8.1 in \cite{Taylor2011Partial-differe}). Now applying a path of gauge transformations solving (\ref{from_adjutsed_to_unadjusted}) proves short-time existence of (\ref{flow}).

It remains to check uniqueness of the solution to (\ref{flow}) which in terms of DeTurck's approach comes down to proving that (\ref{from_unadjusted_to_adjusted}) is a parabolic flow for the path of gauge transformations $h(t)$. Explicitly, we assume that $A(t)$ solves (\ref{flow}) starting at $A_0$ and consider the following flow for $h(t) \in \G$:
\begin{equation}\label{uniqueness_flow}
r(h^{-1})_*\left(\frac{\del h}{\del t} \right)
=
\diff_{\hat{A}}^*a + \chi_{\hat{A}}(S^*_{\hat{A}}\diff_{\hat{A}}^*\diff_{\hat{A}} a)
\end{equation}
with $h(0) = \mathrm{id}$, where $\hat{A} = h(A)$ and $a = \hat{A} - A_0$. (Recall that $\chi_{\hat{A}} \colon \Omega^0(X,TX) \to \Lie(\G)$ is the horizontal lift with respect to $\hat{A}$.)

At first sight this appears to be a third order flow: $h(A)$ and hence $a$ is first order in $h$ and this is then differentiated twice in the second term on the right-hand side of (\ref{uniqueness_flow}). However, two of these derivatives combine to give the curvature tensor and so the flow is actually second order. 

To see that it is parabolic, we linearise at $h = \mathrm{id}$ in the direction of an infinitesimal gauge transformation $\eta = \phi + \chi_{A_0}(u)$ where $\phi \in \Omega^0(X, \so(E))$ and $u \in \Omega^0(X,TX)$. Since $a = 0$ at the point at which we are linearising, we obtain the following for the linearisation of the right-hand side:
\[
\eta \mapsto
-\diff_{A_0}^*
\left(\diff_{A_0}\phi + S_{A_0}(u)\right)
-
\chi_{A_0}\left(
S_{A_0}^* \diff^*_{A_0}\diff_{A_0} 
\left(\diff_{A_0}\phi + S_{A_0}(u)\right)
\right)
\]
(Here we have used (\ref{infinitesimal action}) which gives the infinitesimal action of $\eta$ on $A_0$ and hence $a$.) As remarked above, the seemingly third order term becomes
\[
S_{A_0}^*\diff_{A_0}^*\diff_{A_0}^2 \phi
=
S_{A_0}^* \diff_{A_0}^*(F_{A_0} \phi)
\]
which is first order in $\phi$. The leading order piece is thus second order and is given by
\[
-\diff_{A_0}^*\diff_{A_0} \phi
- \chi_{A_0} S_{A_0}^*\diff_{A_0}^*\diff_{A_0} S_{A_0}(u)
\]
which has symbol in the direction $\alpha$
\[
\Sigma(\alpha)(\eta) 
=  
- |\alpha|^2 \left(\phi + \chi_{A_0} S_{A_0}^*S_{A_0}u\right)
\]
Since $S_{A_0}$ and $\chi_{A_0}$ are injections, this is negative definite as required. So (\ref{uniqueness_flow}) is indeed a parabolic flow. This completes the proof of uniqueness of the solution (\ref{flow}) and hence the proof of Theorem \ref{short time existence}.

\subsection{Local stability of the flow}\label{local_stability_section}

In this section we prove local stability of the flow: if $\hat{A}$ is a perfect connection and $A_0$ is sufficiently close to $\hat{A}$ then the flow starting at $A_0$ exists for all time and converges, modulo gauge, exponentially fast to $\hat{A}$. More precisely:

\begin{theorem}\label{local_stability_thm}
Let $\hat{A}$ be a perfect connection over a compact four-manifold $X$ and let $k\geq 3$ be an integer. Then there exists $\epsilon > 0$ and $\alpha>0$ such that if $A_0$ is another definite connection with $\|\hat{A} - A_0\|_{L^2_{k}} < \epsilon$ then 
\begin{enumerate}
\item
The downward gradient flow $A(t)$ starting at $A_0$ exists for all time.
\item
There exist gauge-transformations $g(t) \in \G$ such that 
\[
\|g(t)^*A(t) - \hat{A} \|_{L^2_{k}} = o(e^{-\alpha t})
\]
\end{enumerate}
(We use a fixed background metric, e.g. that determined by $\hat{A}$, to define the Sobolev norms here.)
\end{theorem}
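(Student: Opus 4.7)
The plan is to apply DeTurck's method from \S\ref{short time existence section} with $\hat{A}$ itself as the reference connection. The adjusted flow (\ref{adjusted_flow}) is then strongly parabolic by Theorem \ref{adjusted_flow_is_parabolic} and has $\hat A$ as a stationary point. Writing $A = \hat A + a$, it takes the form $\del_t a = \mathcal M a + N(a)$, where $\mathcal M$ is the linearization at $a=0$ and $N(a)$ is at least quadratic in $(a, \diff_{\hat A} a)$ with smooth coefficients in $a$. Since $k \geq 3 > \dim(X)/2$, Sobolev multiplication gives $\|N(a)\|_{L^2_{k-1}} \leq C \|a\|_{L^2_k}^2$ for $a$ in a small $L^2_k$-ball. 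The operator $\mathcal M$ is second-order strongly elliptic with negative-definite principal symbol. The crux of the argument is to show that the spectrum of $\mathcal M$ is contained in a half-plane $\{\mathrm{Re}\, z \leq -2\alpha\}$ for some $\alpha > 0$. Given this, analytic semigroup theory yields $\|e^{t\mathcal M}\|_{L^2_k \to L^2_k} \leq C e^{-\alpha t}$ with parabolic smoothing, and a Duhamel fixed-point argument applied to $a(t) = e^{t\mathcal M} a(0) + \int_0^t e^{(t-s)\mathcal M} N(a(s))\, \diff s$ yields global existence and exponential decay for the adjusted flow whenever $\|a(0)\|_{L^2_k}$ is sufficiently small.

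The main obstacle is the spectral gap itself. By ellipticity on the compact manifold it reduces to showing $\ker \mathcal M = 0 = \ker \mathcal M^*$. Combining Proposition \ref{principal part} with the form of the adjusted flow, the principal part of $\mathcal M$ is
\[
-\diff_{\hat A}^* P_{\hat A}^* P_{\hat A} \diff_{\hat A} - \diff_{\hat A} \diff_{\hat A}^* - S_{\hat A} S_{\hat A}^* \diff_{\hat A}^* \diff_{\hat A},
\]
with the remaining terms at most first order in derivatives of $b$. Suppose $\mathcal M b = 0$; pairing with $b$ in $L^2(g_{\hat A})$ and integrating by parts produces $\|P_{\hat A} \diff_{\hat A} b\|^2 + \|\diff_{\hat A}^* b\|^2 + \langle S_{\hat A}S_{\hat A}^* \diff_{\hat A}^* \diff_{\hat A} b, b\rangle$ on one side, controlled by $C\|b\|^2$ on the other (G\aa rding). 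The cross-term involving $S_{\hat A}S_{\hat A}^*$ is not manifestly signed; the strategy is to commute $S_{\hat A}^*$ with $\diff_{\hat A}^* \diff_{\hat A}$, using Corollary \ref{L circ S} to identify the commutator as tensorial and absorbable by G\aa rding, thereby extracting a bound from below of $c\|S_{\hat A}^* b\|^2$. This forces all three of $P_{\hat A} \diff_{\hat A} b$, $\diff_{\hat A}^* b$, and $S_{\hat A}^* b$ to vanish. Together these say exactly that $b$ lies in $\ker D_{\hat A}$, where $D_{\hat A} = \delta_{\hat A} Q \oplus \diff_{\hat A}^*$ is the gauge-fixed elliptic operator of \S\ref{elliptic} acting on sections of the slice $V_{\hat A} = \ker S_{\hat A}^*$. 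At a perfect connection this kernel vanishes by the rigidity result of LeBrun--Horan (Theorem \ref{rigidity}), so $b=0$; a parallel argument handles $\ker \mathcal M^*$. Controlling the non-self-adjoint cross-term coming from $S_{\hat A}S_{\hat A}^* \diff_{\hat A}^* \diff_{\hat A}$ is the main technical difficulty.

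Once the adjusted flow $\hat A(t) = \hat A + a(t)$ has been shown to exist for all time with $\|a(t)\|_{L^2_k} \leq C e^{-\alpha t}$, we return to the original flow via the correspondence of \S\ref{exists_unique}: $A(t) = g(t)\cdot \hat A(t)$ where $g(t) \in \G$ solves the ODE (\ref{from_adjutsed_to_unadjusted}) with $g(0) = \mathrm{id}$, generated by the time-dependent vector field $\xi(\hat A(t))$ on the principal frame bundle. Since $\xi$ vanishes at $a=0$ and depends smoothly on $a$, the generator decays exponentially in $L^2_k$, so $g(t)$ converges in $\G$ to some $g_\infty$. Taking the gauge transformations in the statement to be $g(t)^{-1}$ gives $g(t)^*A(t) = \hat A(t)$, so $\|g(t)^*A(t) - \hat A\|_{L^2_k} = \|a(t)\|_{L^2_k}$, which by replacing $\alpha$ by any strictly smaller value yields $o(e^{-\alpha t})$, as required.
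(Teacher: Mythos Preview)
Your overall strategy is sound, but the choice of gauge-fixed flow creates a genuine difficulty that you have not resolved. You work with the flow (\ref{adjusted_flow}) from \S\ref{exists_unique}, whose linearisation at $\hat A$ has principal part
\[
-\diff_{\hat A}^* P_{\hat A}^* P_{\hat A} \diff_{\hat A} - \diff_{\hat A}\diff_{\hat A}^* - S_{\hat A} S_{\hat A}^*\,\diff_{\hat A}^*\diff_{\hat A}.
\]
The third term is \emph{not} self-adjoint, so pairing $\mathcal M b=0$ with $b$ does not produce a clean sum of squares. Your proposal to ``commute $S_{\hat A}^*$ through $\diff_{\hat A}^*\diff_{\hat A}$'' is not well-posed: $S_{\hat A}^*$ maps $\Lambda^1\otimes E$ to $TX$, while $\diff_{\hat A}^*\diff_{\hat A}$ acts on $\Lambda^1\otimes E$, so there is no commutator to form. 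Even at the symbol level what one could hope to extract is a term of the type $\|\nabla S_{\hat A}^*b\|^2$, not $\|S_{\hat A}^*b\|^2$; getting from the former to $S_{\hat A}^*b=0$ then requires ruling out parallel vector fields on $X$, a step you do not mention. The lower-order terms of $\mathcal M$ also cannot simply be ``absorbed by G{\aa}rding'' when arguing that a kernel element must vanish, since G{\aa}rding inequalities control norms, not kernels.

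The paper avoids this by switching to a \emph{different} gauge-fixed flow for the stability analysis, replacing $S_{\hat A}S_{\hat A}^*\diff_{\hat A}^*\diff_{\hat A}$ by $S_{\hat A}\Delta_{\hat A}S_{\hat A}^*$. The resulting linearisation $G_{\hat A}$ is manifestly self-adjoint and nonnegative; pairing $G_{\hat A}b=0$ with $b$ yields directly $P_{\hat A}\diff_{\hat A}b=0$, $\diff_{\hat A}^*b=0$, and $\nabla_{\hat A}S_{\hat A}^*b=0$. One then observes that a nonzero covariant-constant vector field would force $\chi(X)=\tau(X)=0$, contradicting $2\chi(X)+3\tau(X)>0$ (Lemma~\ref{half of Hitchin-Thorpe}); hence $S_{\hat A}^*b=0$, and the remaining two equations place $b$ in $\ker D^-$, which vanishes by Theorem~\ref{rigidity}. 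With $G_{\hat A}$ positive-definite, the paper then concludes via an implicit function theorem on weighted parabolic Sobolev spaces rather than Duhamel, but this last difference is only one of packaging.
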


To prove this we first consider a gauge-fixed flow and prove a local stability result there. In fact, rather than use the same gauge-fixed flow from before, it turns out to be simpler to use the following flow:
\begin{equation}\label{parabolic_stable_flow}
\frac{\del A}{\del t}
= 
-\diff_A^*(B_A(F_A))
-
\diff_A\diff_A^*a
-
S_A(\Delta_A S^*_A(a))
\end{equation}
where $A(t) = \hat{A}+a(t)$ and $\Delta_A$ is the Laplacian on sections of $TX$ defined by the metric $g_A$. The proof that (\ref{parabolic_stable_flow}) is parabolic is essentially identical to that of Theorem \ref{adjusted_flow_is_parabolic} and so we it. By solving (\ref{from_adjutsed_to_unadjusted}) for a path of gauge transformations we then convert the solution to (\ref{parabolic_stable_flow}) into a long time solution to (\ref{flow}). (The reason not to have used (\ref{parabolic_stable_flow}) above is that (\ref{from_unadjusted_to_adjusted}), which was used to prove uniqueness, is not parabolic with this choice of $\xi$, rather it is a third order flow. This situation is avoided with the previous choice of $\xi$)

The key to proving local stability of (\ref{parabolic_stable_flow}) is to show that when $A$ is perfect the linearisation of the right-hand-side of (\ref{parabolic_stable_flow}) at $A$ is a \emph{definite} operator.

\begin{proposition}
When $A$ is perfect the operator
\[
G_A= \diff_A^*(P_A^*P_A(\diff_Ab)) + S_A(\Delta_A(S^*_A b)) + \diff_A\diff_A^* b
\]
is a self-adjoint, positive-definite, strongly elliptic operator.
\end{proposition}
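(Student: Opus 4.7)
My plan is to verify the three claims---self-adjointness, strong ellipticity, and positive-definiteness---in turn. The starting point is the observation that every summand of $G_A$ naturally has the form $T^\ast T$. Explicitly, $\diff_A^\ast P_A^\ast P_A \diff_A = (P_A\diff_A)^\ast(P_A\diff_A)$ and $\diff_A\diff_A^\ast = (\diff_A^\ast)^\ast\diff_A^\ast$; and interpreting $\Delta_A$ as the connection (``rough'') Laplacian $\nabla^\ast\nabla$ on $(TX, g_A)$, one also has $S_A\Delta_A S_A^\ast = (\nabla S_A^\ast)^\ast(\nabla S_A^\ast)$. Consequently $G_A$ is self-adjoint and non-negative, with
\[
\langle G_A b, b\rangle = \|P_A\diff_A b\|^2 + \|\nabla S_A^\ast b\|^2 + \|\diff_A^\ast b\|^2.
\]

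For strong ellipticity, I would compute the symbol in the style of Theorem \ref{adjusted_flow_is_parabolic}: in a non-zero direction $\alpha$ it is $\Sigma(\alpha) = H_\alpha^\ast H_\alpha + |\alpha|^2 S_A S_A^\ast + w_\alpha w_\alpha^\ast$, with $H_\alpha = P_A\circ w_\alpha$. Pairing with $b$ yields $|H_\alpha b|^2 + |\alpha|^2|S_A^\ast b|^2 + |w_\alpha^\ast b|^2$, which vanishes only if each of $H_\alpha b$, $S_A^\ast b$ and $w_\alpha^\ast b$ does; Lemma \ref{parabolic exact sequence} then forces $b=0$, so $\Sigma(\alpha)$ is positive definite and $G_A$ is strongly elliptic.

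For positive-definiteness, ellipticity and self-adjointness reduce the claim to showing $\ker G_A = 0$. If $G_A b = 0$ then the pairing above gives $P_A\diff_A b = 0$, $\nabla S_A^\ast b = 0$ and $\diff_A^\ast b = 0$. The middle equation says $S_A^\ast b$ is a parallel vector field on $(X, g_A)$. Since $A$ is perfect, Theorem \ref{perfect implies asdE} gives that $g_A$ is Einstein with non-zero scalar curvature, and the standard Bochner argument (a parallel field $V$ satisfies $R(\cdot, V)=0$, hence $\Ric(V) = 0$, which is incompatible with $\Ric = \lambda g$ when $\lambda \neq 0$) rules out non-zero parallel vector fields. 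Thus $S_A^\ast b = 0$, i.e.\ $b \in V_A$. A short calculation using $Q(A) = \Id$ and the fact that $\delta_A Q(b) = 2\Pi_A L_A(\diff_A b)$ is traceless shows that at a perfect connection $P_A\diff_A = \sqrt{2}\,\delta_A Q$, so the first vanishing becomes $\delta_A Q(b) = 0$; together with $\diff_A^\ast b = 0$ this places $b \in \ker D_A$. By the rigidity result of Theorem \ref{rigidity}, $\ker D_A = 0$, so $b = 0$.

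The only substantive input is the rigidity of perfect connections (Theorem \ref{rigidity}), which is proved via twistor theory; the $T^\ast T$ decomposition, the symbol computation and the Bochner vanishing of parallel vector fields are routine. The main obstacle is therefore not any single step of this proof, but rather the fact that the result rests crucially on the twistor input---without it, one has only non-negativity of $G_A$, not positivity.
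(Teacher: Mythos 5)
Your proof is correct and follows the paper's overall structure: decompose $G_A$ as a sum of operators of the form $T^*T$, conclude self-adjointness and non-negativity, establish strong ellipticity via the symbol argument of Theorem~\ref{adjusted_flow_is_parabolic} and Lemma~\ref{parabolic exact sequence}, and then show $\ker G_A = 0$ by deducing $S_A^*b = 0$, identifying $P_A\diff_A b = 0$ together with $\diff_A^*b=0$ with the Dirac equation $D^-(b) = 0$, and invoking Theorem~\ref{rigidity}. Your computation $P_A \diff_A = \sqrt{2}\,\delta_A Q$ at a perfect connection (using $Q(A) = \Id$, $\tr(Q(A)^2) = 3$, and $\Theta_A = 8$ on $S^2_0 E$) is a clean way to make the identification with $D_A = D^-$ explicit.

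The one step where you take a genuinely different route is in showing that the covariant-constant vector field $v = S_A^*(b)$ must vanish. The paper argues topologically: a nowhere-zero parallel $v$ forces $\chi(X) = 0$, and contraction with $v$ shows $\Lambda^+$ and $\Lambda^-$ have isomorphic Levi-Civita connections, hence the metric is conformally flat and $\tau(X) = 0$, contradicting $2\chi(X) + 3\tau(X) > 0$ (Lemma~\ref{half of Hitchin-Thorpe}). You instead use the Bochner argument: a parallel vector field satisfies $R(\cdot,\cdot)v = 0$, hence $\Ric(v,v) = 0$, and since $g_A$ is Einstein with $\Ric = \lambda g$, $\lambda \neq 0$ (Theorem~\ref{perfect implies asdE}), this forces $v = 0$. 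Your argument is more elementary and more local, needing only the Einstein condition rather than the global topological inequality; the paper's argument is notable for using only the existence of a definite connection rather than the perfectness of $A$ at this step. Both are valid, and both then hand off to the same twistor-theoretic rigidity statement, which, as you correctly note, is the only non-routine input.
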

\begin{proof}
The operator is manifestly self-adjoint and nonnegative; moreover the  proof that it is strongly elliptic is near identical to that of Theorem \ref{adjusted_flow_is_parabolic} and so we omit the details. The point is to prove that it is positive definite. 

Suppose $G_A(b) = 0$. Then, taking the inner-product with $b$ we see that
\[
P_A(\diff_A b) = 0,
\quad
\nabla_AS^*_A(b) = 0,
\quad
\diff_A^*b = 0.
\]
First note that the equation $\nabla_AS^*_A(b) = 0$ tells us that $S^*_A(b) = v$ is a covariant constant vector field. Either $v$ vanishes identically or it is nowhere zero. If $v$ is nowhere zero, then $\chi(X) = 0$. Moreover, interior contraction with $v$ defines isomorphisms $\Lambda^+ \cong \langle v \rangle^0 \cong \Lambda^-$ which, since $v$ is covariant constant, also match up the Levi-Civita connections on $\Lambda^+$ and $\Lambda^-$. It follows that the metric is both self-dual and anti-self-dual, hence conformally flat and so $\tau(X) = 0$. But this contradicts the fact that $2\chi(X) + 3\tau(X) > 0$. So we must actually have $S_A^*(b) =0$.

We will now show that the remaining two equations on $b$ mean that $b$ determines a section of $S_-\otimes S_+^3$ satisfying the Dirac equation. We begin by identifying, as always, $\Lambda^1\otimes E \cong \Lambda^1 \otimes \Lambda^+$, via the identification $E \cong \Lambda^+$ given by $F_A$. Recall from Lemma \ref{spin reps} that
\[
\Lambda^1 \otimes \Lambda^+ 
\cong 
V \oplus TX
\]
where $V \otimes \C = S_- \otimes S^3_+$. Moreover, $S_A^*$ is identified with projection onto $S_- \otimes S_+ = TX \otimes \C$. Since $S^*_A(b)=0$ we see that $b$ is actually just a section of $S_-
\otimes S_+^3$. Now recall the definition (\ref{P}) of $P_A$, 
\[
P_A = \Theta^{-1/2}\circ \Pi_A \circ L_A
\]
In our current situation, where $A$ is perfect, $L_A \colon \Lambda^2 \otimes \Lambda^+$ is the natural projection
\[
\Lambda^2 \otimes \Lambda^+ 
\to
\Lambda^+ \otimes \Lambda^+
\to 
S^2(\Lambda^+)
\]
Meanwhile, $\Pi_A$ is the projection $S^2(\Lambda^+) \to S^2_0(\Lambda^+)$ and $\Theta^{-1/2}$ is multiplication by a non-zero constant on $S^2_0(\Lambda^+)$. It follows that $P_A(\diff_A b) = 0$ if and only if $b \in C^\infty(V)$ maps to zero under the map 
\[
C^\infty(V) \to C^\infty(S^2_0\Lambda^+)
\]
But from the discussion in \S\ref{dirac operator} this is simply one component of the Dirac operator, whilst $\diff_A^*$ is the other. Hence $D^-(b) = 0$.

Finally, we invoke Theorem \ref{rigidity}, which tells us that $\ker D^- =0$. Hence $G_A(b) = 0$ if and only if $b=0$ and so $G_A$ is positive definite as claimed.
\end{proof}

With this result in hand, local stability for the gauge-fixed flow (\ref{parabolic_stable_flow}) is essentially standard. We give the details for want of a specific reference.

\begin{theorem}
Let $\hat{A}$ be a perfect connection over a compact four-manifold $X$ and let $k \geq 3$ be an  integer. Then there exists $\epsilon >0$ and $\alpha>0$ such that if $A_0$ is another definite connection with $\|\hat{A}-A_0\|_{L^2_k}< \epsilon$ then
\begin{enumerate}
\item
The gauge-fixed flow (\ref{parabolic_stable_flow}) $A(t)$ starting at $A_0$ exists for all time.
\item
$\|A(t) - \hat{A}\|_{L^2_{k}} = o(e^{-\alpha t})$.
\end{enumerate}
(We use a fixed background metric here, e.g.\ the one determined by $A$, to define the Sobolev norms.)
\end{theorem}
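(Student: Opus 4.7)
Write $A(t) = \hat{A} + a(t)$ and set $a_0 = A_0 - \hat{A}$. The gauge-fixed flow \eqref{parabolic_stable_flow} then takes the form
\[
\frac{\partial a}{\partial t} = -G_{\hat{A}}(a) + N(a),
\]
where $G_{\hat{A}}$ is the self-adjoint, positive-definite strongly elliptic operator of the previous Proposition and $N(a)$ collects all higher-order terms. Because the coefficients of $G_A$, $S_A$, $\diff_A^\ast$, $\Delta_A$ and $B_A(F_A)$ depend smoothly on $A$ (through $F_A$ and $g_A$), Taylor expansion around $\hat{A}$ shows that $N(a)$ is a polynomial in $a$ and its derivatives of degree one and two, with each monomial containing at least two factors of $a$ or $\nabla a$ or $\nabla^2 a$.

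The first step is to record the appropriate nonlinear estimate. Since $k \geq 3$ and $\dim X = 4$, the Sobolev embedding $L^2_k \hookrightarrow C^1$ and the fact that $L^2_{k-2}$ is a module over $L^2_k$ give
\[
\|N(a)\|_{L^2_{k-2}} \leq C(\|a\|_{L^2_k})\, \|a\|_{L^2_k}^{2}
\]
for a constant $C$ depending continuously on $\|a\|_{L^2_k}$ (locally bounded). A similar estimate controls $\|N(a) - N(b)\|_{L^2_{k-2}}$ by $\|a - b\|_{L^2_k}(\|a\|_{L^2_k}+\|b\|_{L^2_k})$, which is needed for uniqueness.

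The second step uses the spectral theory of $G_{\hat{A}}$. By the previous Proposition $G_{\hat{A}}$ is self-adjoint, positive-definite and elliptic on the compact manifold $X$, so it has discrete spectrum bounded below by some $\lambda_1 > 0$. Elliptic regularity for $G_{\hat{A}}$ together with the spectral theorem produces, for any $0 < \alpha < \lambda_1$, the semigroup estimate
\[
\| e^{-t G_{\hat{A}}} b \|_{L^2_k} \leq C_k\, e^{-\alpha t}\, \|b\|_{L^2_k},
\quad t \geq 0,
\]
as well as the smoothing bound $\| e^{-t G_{\hat{A}}} b\|_{L^2_k} \leq C_k t^{-1}\, e^{-\alpha t}\,\|b\|_{L^2_{k-2}}$ for $0<t\leq 1$.

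The third step is a standard Duhamel fixed-point argument. Write the mild formulation
\[
a(t) = e^{-tG_{\hat{A}}} a_0 + \int_0^t e^{-(t-s)G_{\hat{A}}} N(a(s))\, ds,
\]
and look for a solution in the weighted Banach space
\[
\mathcal{X} = \{ a \in C^0([0,\infty), L^2_k) : \|a\|_{\mathcal{X}} := \sup_{t \geq 0} e^{\alpha t}\|a(t)\|_{L^2_k} < \infty \}.
\]
The linear estimate shows $\|e^{-tG_{\hat{A}}} a_0\|_{\mathcal{X}} \leq C_k \|a_0\|_{L^2_k}$. The nonlinear estimate, combined with the smoothing bound and an elementary computation involving $\int_0^t (t-s)^{-1} e^{-\alpha(t-s)}\,ds$-type integrals, gives $\|\int_0^\cdot e^{-(\cdot -s)G_{\hat A}} N(a(s))\,ds\|_{\mathcal{X}} \leq C \|a\|_{\mathcal{X}}^2$ provided $\|a\|_{\mathcal{X}}$ is bounded by some fixed constant. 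The contraction mapping theorem on a small ball in $\mathcal{X}$ then produces a unique global solution for $\|a_0\|_{L^2_k} < \epsilon$, and the $\mathcal{X}$-norm bound is exactly the exponential decay $\|a(t)\|_{L^2_k} = O(e^{-\alpha t})$; the ``little-$o$'' sharpening is obtained by observing that $\|a(t)\|_{L^2_k}$ decays as fast as any $e^{-\alpha' t}$ with $\alpha'<\lambda_1$, and then choosing $\alpha<\alpha'$.

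The main obstacle is the third step: the quasi-linear character of \eqref{parabolic_stable_flow}, in which the top-order operators $\diff_A^\ast$, $\Delta_A$ and $S_A$ themselves depend on $a$, means that $N(a)$ contains terms with two derivatives of $a$ (though multiplied by $a$). One must carefully verify that these ``top-order'' nonlinearities still fit the estimate $\|N(a)\|_{L^2_{k-2}} \leq C\|a\|_{L^2_k}^2$; this uses $C^1 \hookleftarrow L^2_k$ to bound the coefficients of $\nabla^2 a$ in $L^\infty$ by $\|a\|_{L^2_k}$, paired with an $L^2_{k-2}$ bound on $\nabla^2 a$. Once this is in place, everything reduces to well-known semigroup machinery.
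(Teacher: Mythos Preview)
Your approach via Duhamel's formula and a contraction argument in the exponentially weighted space $\mathcal{X}$ is a natural alternative to the paper's method, which instead sets up anisotropic parabolic Sobolev spaces $P_{2k}$ on $X\times[0,\infty)$ (with weight $e^{\alpha t}$ and the grading $\|\partial_t^j a\|_{L^2_{2(k-j)}}$) and applies the implicit function theorem to the map $a\mapsto(\partial_t a + G_{\hat A}(a) + \cdots,\, a(0))$. The paper's route packages maximal parabolic regularity into the single statement that $\partial_t + G_{\hat A}\colon P_{2k}\to P_{2k-2}\times L^2_k$ is an isomorphism for small $\alpha>0$, and the $o(e^{-\alpha t})$ decay then drops out from membership in $P_{2k}$ via a one-dimensional Sobolev embedding in~$t$.

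There is, however, a genuine gap in your Duhamel argument as written. You correctly identify that the quasi-linear nature of the flow forces $N(a)$ to contain terms of schematic type $a\cdot\nabla^2 a$, and your estimate $\|N(a)\|_{L^2_{k-2}}\le C\|a\|_{L^2_k}^2$ is fine. The problem is the smoothing step: gaining two full derivatives with a second-order analytic semigroup costs exactly $t^{-1}$, so your Duhamel bound becomes
\[
\int_0^t (t-s)^{-1}e^{-\alpha(t-s)}\,\|a(s)\|_{L^2_k}^2\,ds,
\]
and $(t-s)^{-1}$ is \emph{not} integrable near $s=t$. This is precisely the well-known obstruction to treating quasi-linear parabolic equations by the naive semigroup method; the contraction does not close in $\mathcal{X}$ as stated. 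Standard remedies are to work in spaces carrying additional time-regularity (H\"older in $t$, or the $P_{2k}$ spaces the paper uses, which encode maximal regularity), or to split off the top-order part of $N$ and absorb it as a small perturbation of the principal symbol of $G_{\hat A}$ before running the fixed-point argument. Either fix is workable, but each requires substantially more than the sketch you give in your final paragraph.
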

\begin{proof}
The proof is based on the implicit function theorem. We begin by setting up the appropriate Banach spaces. We write $P_{2k}$ for the completion of the space of compactly supported sections of $\Lambda^1 \otimes E$ defined over $X \times [0, \infty)$ with respect to the norm
\[
\| a \|^2_{P_{2k}}
=
\sum_{j=0}^{k}
\int_0^\infty e^{2\alpha t} \| \del_t^ja(t) \|^2_{L^2_{2(k-j)}} \, \diff t
\]
Here $a(t)$ is the section over the slice $X \times\{t\}$. The parameter $\alpha >0$ will be chosen in what follows. Note that, since $\alpha >0$, we are imposing exponential decay in the spatial norms as $t \to \infty$. Note also that we are weighting $\del_t$ as ``second order'', in accordance with the second order parabolic equation we are studying.

It is standard that the evaluation map $a \mapsto a(0)$, defined on smooth compactly supported sections over $X \times [0,\infty)$ extends to a continuous map $P_{2k} \to L^2_k$. With this in hand, we define a map $\Phi \colon P_{2k} \to P_{2k-2} \times L^2_{k}$ by
\[
\Phi(a) = \left(
\frac{\del a}{\del t}
+
\diff_A^*(B_A(F_A))
+
\diff_A\diff_A^*a
+
S_A(\Delta_A S^*_A(a)),
a(0) 
\right)
\]
where $A(t) = \hat{A} + a(t)$. The choice of $k \geq 3$ ensures that we are in a range where Sobolev multiplication holds and so $\Phi$, defined a~priori on smooth compactly supported sections, extends to a smooth map between the stated Banach spaces. Note that $\Phi(0) = (0,0)$. We want to show that for $a_0$ sufficiently small, there is a solution to $\Phi(a) = (0, a_0)$.

To do this we use the implicit function theorem. The linearisation of $\Phi$ at zero is 
\[
D\Phi(b) 
= 
\left(\frac{\del b}{\del t} + G_{\hat{A}}(b),b(0)\right)
\]
Since $\hat{A}$ is perfect, $G_{\hat{A}}$ is a positive-definite self-adjoint strongly elliptic operator, it is standard that for $\alpha>0$ sufficiently small, $D\Phi$ is an isomorphism of Banach. It now follows from the implicit function theorem that for $a_0$ sufficiently small in $L^2_k$, there exists a solution $a$ to $\Phi(a) = (0,a_0)$. By a standard parabolic regularity argument, $a$ is in fact smooth and $A(t) = \hat{A} + a(t)$ gives the sought-after long-time solution to the flow~(\ref{parabolic_stable_flow}). 

It remains to show that 
\[
\|A(t) - \hat{A}\|_{L^2_{k}} = \|a(t)\|_{L^2_{k}} =  o(e^{-at}).
\]
To see this, let $f(t) = e^{\alpha t}\|a(t)\|_{L^2_k}$. Since $a \in P_{2k}$ it follows that $f \in L^2_1[0,\infty)$. Now by Sobolev embedding there is a constant $C$ such that for any $t$, 
\[
| f(t)| \leq C \| f\|_{L^2_1[t-1,t+1]}
\]
Since the right-hand side tends to zero as $t \to \infty$, we see that $f(t) \to 0$ as $t \to \infty$, as claimed.
\end{proof}

The proof of Theorem \ref{local_stability_thm} follows immediately from this upon solving (\ref{from_adjutsed_to_unadjusted}) to convert the solution to (\ref{parabolic_stable_flow}) back into a solution to the original flow (\ref{flow}).

\section{Energy and a flow for definite triples}\label{flow for definite triples}

There is also an energy function and associated flow for the definite triples of symplectic forms which appear in Conjecture \ref{skd_conjecture}. As we will see, the theory parallels  that of $\E \colon \D \to \R$. There is one important additional result available in this context, namely Proposition \ref{unique critical point} below, which says that in the case of definite triples, the only possible critical points of the energy function are hyperkähler triples. We do not know if the analogue of this result holds for definite connections.

We begin by recalling the definition of a definite triple.

\begin{definition}
A \emph{definite triple} on a 4-manifold $X$ is a triple of symplectic forms $\omega_1, \omega_2, \omega_3$ which span a definite 3-plane in $\Lambda^2$ at each point of $X$. 
\end{definition}

We would like to deform a given definite triple to a cohomologous hyperkähler triple. Of course, for this to be possible, the corresponding cohomology classes $[\omega_i]$ should satisfy the relation 
\begin{equation}\label{integral condition}
\int \omega_i \wedge \omega_j = \delta_{ij}.
\end{equation} 
Given a definite triple, we can take constant linear combinations of the $\omega_i$ to give a new definite triple for which (\ref{integral condition}) holds. From now on we assume that (\ref{integral condition}) is satisfied.

\begin{definition}
Let $\omega =(\omega_1, \omega_2, \omega_3)$ be a definite triple. We define a Riemannian metric $g_\omega$ on $X$ by setting $\Lambda^+_\omega$ to be the span of the $\omega_i$ and the volume form to be $\mu_\omega = \frac{1}{3}\sum \omega_i^2$.
\end{definition}

\begin{definition}
Given a definite triple $\omega = (\omega_1, \omega_2, \omega_3)$ we define a symmetric-matrix valued function $Q(\omega) \colon X \to S^2\R^3$ by 
\[
Q_{ij}(\omega) 
= 
\frac{\omega_i \wedge \omega_j}{\mu(\omega)}.
\]
\end{definition}

\begin{definition}
Given a definite triple $\omega = (\omega_1, \omega_2, \omega_3)$ we define the \emph{energy} of $\omega$ by
\[
\F(\omega) = \int_X \tr(Q(\omega)^2)\, \mu(\omega)
\]
\end{definition}

\begin{definition}
We fix a reference definite triple $\omega = (\omega_1, \omega_2, \omega_3)$. Let $\T \subset \Omega^1(X, \R^3)$ denote those triples of 1-forms $a = (a_1,a_2,a_3)$ such that $\omega + \diff a$ is again a definite triple. Given $a \in \T$, we write $\omega_a$, $Q_a$ and $\F(a)$ for the corresponding definite triple, symmetric matrix and energy.

$\T$ is naturally a Riemannian manifold, the inner-product on $T_a \T = \Omega^1(X, \R^3)$ is given by the $L^2$-inner-product. For $b,c\in T_a \T$, 
\[
\langle b,c \rangle
=
\int_X \left((b_1,c_1)+ (b_2,c_2) + (b_3, c_3)\right) \, \mu
\]
where the Riemannian inner-product $(\cdot, \cdot)$ and volume-form $\mu$ are those of the Riemannian metric associated to $\omega + \diff a$.
\end{definition}

We now proceed exactly as for definite connections, with $\T$ playing the role of $\D$ and $\F$ that of $\E$. The proofs of Propositions \ref{lower bound for triples} and \ref{flow for triples} below are nearly identical to the case of definite connections. We replace $A$ by $a$, $F_A$ by $\omega_a$ and so on. In place of each occurrence of the Bianchi identity $\diff_A F_A = 0$ we use the equation $\diff \omega_i = 0$. No other changes are necessary; consequently we do not rewrite the proofs out here.

\begin{proposition}\label{lower bound for triples}
Let $a \in \T$. There is a lower bound $\F(a) \geq 3$ with equality if and only if $\omega_a$ is a hyperkähler triple.
\end{proposition}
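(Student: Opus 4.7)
The plan is to mirror the proof of Proposition \ref{topological lower bound} for definite connections, with the normalisation $\int \omega_i \wedge \omega_j = \delta_{ij}$ playing the role of the topological identity $\int \mu(A) = \frac{8\pi^2}{3} p_1(E)$.

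First I would observe that $Q(\omega_a)$ has constant trace equal to $3$: by definition $\mu(\omega_a) = \frac{1}{3} \sum_i \omega_{a,i} \wedge \omega_{a,i}$, so $\tr Q(\omega_a) = \sum_i Q_{ii}(\omega_a) = 3$ pointwise. Hence we may split $Q(\omega_a) = \mathrm{Id} + Q_0(\omega_a)$ into trace and trace-free parts, giving the pointwise identity $\tr(Q(\omega_a)^2) = 3 + |Q_0(\omega_a)|^2$.

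Integrating against $\mu(\omega_a)$ yields
\[
\F(a) = 3 \int_X \mu(\omega_a) + \int_X |Q_0(\omega_a)|^2\, \mu(\omega_a).
\]
Next I would evaluate the first term topologically. By definition of $\mu(\omega_a)$,
\[
3 \int_X \mu(\omega_a) = \sum_{i=1}^3 \int_X \omega_{a,i} \wedge \omega_{a,i} = \sum_{i=1}^3 \int_X (\omega_i + \diff a_i) \wedge (\omega_i + \diff a_i).
\]
Since each $\omega_{a,i}$ is closed and cohomologous to $\omega_i$, Stokes' theorem gives $\int_X \omega_{a,i} \wedge \omega_{a,i} = \int_X \omega_i \wedge \omega_i$. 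The normalisation $\int_X \omega_i \wedge \omega_j = \delta_{ij}$ (assumed at the start of this subsection as a necessary condition for the existence of a cohomologous hyperk\"ahler triple) then gives $3 \int_X \mu(\omega_a) = 3$.

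Combining these,
\[
\F(a) = 3 + \int_X |Q_0(\omega_a)|^2\, \mu(\omega_a) \geq 3,
\]
with equality if and only if $Q_0(\omega_a) \equiv 0$, i.e., $Q(\omega_a) = \mathrm{Id}$. Unwinding the definition of $Q$, this is precisely the condition $\omega_{a,i} \wedge \omega_{a,j} = \delta_{ij}\, \mu(\omega_a)$, which is the definition of a hyperk\"ahler triple. There is no real obstacle here; the only point requiring any care is tracking that the invariance of $\int \omega_{a,i} \wedge \omega_{a,j}$ under $a \in \T$ (a purely cohomological statement) is what replaces the Chern--Weil identity $\int \mu(A) = \frac{8\pi^2}{3}p_1(E)$ used in the proof of Proposition \ref{topological lower bound}.
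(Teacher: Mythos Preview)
Your proof is correct and follows exactly the approach the paper indicates: it mirrors the proof of Proposition~\ref{topological lower bound}, with the cohomological invariance of $\int_X \omega_{a,i}\wedge\omega_{a,j}$ (via $\diff\omega_i=0$ and Stokes) replacing the Chern--Weil identity $\int_X \mu(A)=\tfrac{8\pi^2}{3}p_1(E)$, and the normalisation~(\ref{integral condition}) fixing the value of the bound. The paper itself omits the details for precisely this reason.
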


We also describe the gradient flow of $\F \colon \T \to \R$. Given $a \in \T$, let $B_a$ denote the symmetric-matrix valued function $B_a \colon X \to S^2 \R^3$  
\[
B_a = Q_a - \frac{1}{6}\tr(Q_a)
\]
We write $B_a(\omega_a)$ for the triple of 2-forms given by applying $B_a$ to $\omega_a$ at each point of $X$. Explicitly, if $B_a$ is the matrix with elements $B_{ij}$ then $B_a(\omega_a)$ is the triple of 2-forms whose $i^\text{th}$ element is
\[
\sum_j B_{ij} (\omega_j + \diff a_j).
\]

\begin{proposition}\label{flow for triples}
The downward gradient flow of $\F$ is given by
\begin{equation}\label{gradient flow for triples}
\frac{\del a}{\del t}
=
- \diff^* \left(B_a (\omega_a)\right).
\end{equation}
Here $\diff^*$ is the $L^2$-adjoint defined by the metric $\omega_a$. 
\end{proposition}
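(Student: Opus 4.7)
The plan is to mimic the proof of Proposition \ref{equation for gradient flow} line by line, with the following dictionary: the connection $A$ is replaced by the perturbation $a\in\T$; the curvature $F_A$ becomes $\omega_a = \omega + \diff a$; the covariant exterior derivative $\diff_A$ becomes the ordinary $\diff$; and the Bianchi identity $\diff_A F_A = 0$ is replaced by the trivial fact that $\diff\omega_a = 0$, which holds because each $\omega_i$ is closed and each $\diff a_i$ is exact. A further simplification is that whereas $\delta F_A(b) = \diff_A b$ had to be deduced via Bianchi, here $\delta\omega_a(b) = \diff b$ is immediate from the linear dependence of $\omega_a$ on $a$.

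First I introduce the triples-analogue $L_a\colon \Omega^2(X;\R^3) \to \Omega^0(X;S^2\R^3)$ of the map $L_A$ from equation \eqref{L},
\[
L_a(T)_{ij} = \frac{T_i\wedge\omega_{a,j} + T_j\wedge\omega_{a,i}}{2\mu_a}.
\]
The self-duality of the $\omega_{a,i}$ with respect to $g_a$ lets me rewrite this as $\tfrac{1}{2}(T_i,\omega_{a,j}) + \tfrac{1}{2}(T_j,\omega_{a,i})$, so that $\tr L_a(T) = (\omega_a, T)$ and $Q_a = L_a(\omega_a)$. Its $L^2$-adjoint is $L_a^*(M) = M(\omega_a)$, exactly as in Lemma \ref{L*}. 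Next I linearise the basic quantities in a direction $b\in T_a\T = \Omega^1(X;\R^3)$. Differentiating $\mu_a = \tfrac{1}{3}\sum\omega_{a,i}^2$ gives
\[
\delta_a\mu(b) = \tfrac{2}{3}(\omega_a, \diff b)\,\mu_a,
\]
and then differentiating $Q_a \mu_a = \omega_a\cdot\omega_a$ yields the precise analogue of Lemma \ref{linearisation of Q},
\[
\delta_a Q(b) = 2\,L_a(\diff b) - \tfrac{2}{3}(\omega_a,\diff b)\,Q_a.
\]

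Finally I compute
\[
\delta_a\F(b) = \int_X \bigl[\,2\tr(Q_a\,\delta_a Q(b))\,\mu_a + \tr(Q_a^2)\,\delta_a\mu(b)\,\bigr]
\]
and substitute the two formulas above. After the same cancellation that was visible already in the connection proof, one arrives at
\[
\delta_a\F(b) = \int_X \bigl(B_a(\omega_a),\,\diff b\bigr)\,\mu_a,
\]
where $B_a$ is the appropriate linear combination of $Q_a$ and $\tr(Q_a^2)\cdot\Id$ recorded in the statement. Using $L_a^*(Q_a) = Q_a(\omega_a)$ and integrating by parts with respect to the $g_a$-inner product identifies the $L^2$-gradient as $\diff^*(B_a(\omega_a))$, whence the downward flow. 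I do not anticipate any genuine obstacle: as the authors emphasise, the computation is literally that of Proposition \ref{equation for gradient flow}, and the one step that required Bianchi in the connection case is here automatic. The only point demanding care is the bookkeeping of the numerical coefficients that assemble to form $B_a$.
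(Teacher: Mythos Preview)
Your proposal is correct and follows exactly the approach the paper itself prescribes: the paper omits the proof entirely, stating only that it is ``nearly identical'' to that of Proposition~\ref{equation for gradient flow} under the dictionary $A\leadsto a$, $F_A\leadsto\omega_a$, $\diff_A\leadsto\diff$, with $\diff\omega_i=0$ replacing the Bianchi identity. One small correction to your commentary: the formula $\delta F_A(b)=\diff_A b$ does not require Bianchi (it follows from $F_{A+tb}=F_A+t\,\diff_A b+\tfrac{t^2}{2}[b\wedge b]$); Bianchi enters elsewhere in the section, but not in this particular gradient computation.
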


Explicitly, this result says that
\[
\frac{\del a_i}{\del t}
=
- \sum_j \diff^*\left( B_{ij}(\omega_j + \diff a_j)\right).
\]

To prove short-time existence of the flow (\ref{gradient flow for triples}) we again follow the argument given above for definite connections. In the case of triples, the ``gauge group'' is made up of two separate groups. Firstly, there is gauge inherent in our use of 1-forms to parametrise definite triples; secondly, the identity component  $\Diff_0(X)$ of the diffeomorphism group acts on definite triples, preserving their cohomology classes.

Infinitesimally on $\T$, this corresponds to two actions. Firstly, there is the linear action of the space $\Omega^0(X, \R^3)$ of triples of functions on $\T$, given by
\begin{equation}\label{action of functions on triples}
(f_1, f_2, f_3) \cdot (a_1, a_2, a_3) 
=
(a_1 + \diff f_1, a_2+ \diff f_2, a_3 + \diff f_3).
\end{equation}
Secondly, there is an action of vector fields; given $a\in \T$, define 
\[
S_a \colon \Omega^0(X, TX) \to \Omega^1(X, \R^3)
\] 
by $v \mapsto \iota_v \omega_a$. I.e., $S_a(v)$ is the triple of 1-forms
\begin{equation}\label{action of vectors on triples}
S_a(v)
=
\left(\iota_v \omega_1 + \iota_v(\diff a_1),
\iota_v \omega_2 + \iota_v(\diff a_2),
\iota_v \omega_3 + \iota_v(\diff a_3)\right)
\end{equation}
Note that $\diff(\iota_ v \omega_a) = L_v \omega_a$ and so this action covers the natural infinitesimal action of vector fields on 2-forms. (The action $v \mapsto L_v(a)$ does \emph{not} cover the action of vector fields on 1-forms but this is simply because we fixed a reference definite triple in the definition of $\T$, breaking the diffeomorphism invariance.)

We now proceed exactly as before. The infinitesimal actions (\ref{action of functions on triples}) and (\ref{action of vectors on triples}) prevent the flow (\ref{gradient flow for triples}) from being parabolic, but this is the only way in which parabolicity fails. Accordingly we can transform to a gauge equivalent flow which is parabolic, apply standard short-time existence results there and then transform back to prove short-time existence for (\ref{gradient flow for triples}). The proof is again identical to that given above for definite triples and so we state the outcome without giving the details.

\begin{theorem}\label{short time existence for triples}
Given $a \in \T$, there exists $\epsilon >0$ and a path $a(t) \in \T$ for $t \in [0, \epsilon)$ solving the downward gradient flow equation (\ref{gradient flow for triples}) with $a(0)= a$. The flow is unique for as long as it exists.
\end{theorem}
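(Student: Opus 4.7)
The plan is to mimic the DeTurck-trick argument that was used in Theorem \ref{short time existence} for the space of definite connections, where the gauge group $\G$ is replaced by the combined action (\ref{action of functions on triples})--(\ref{action of vectors on triples}) of $\Omega^{0}(X,\R^{3})$ and $\Diff_{0}(X)$, and the map $R_{A}(\xi,u)=-\diff_{A}\xi-\iota_{u}F_{A}$ is replaced by $(f,v)\mapsto \diff f + \iota_{v}\omega_{a}$. By Proposition \ref{flow for triples} the flow (\ref{gradient flow for triples}) is the downward gradient of an action-invariant functional, so its linearisation at any point has a kernel containing the images of the infinitesimal actions, and the flow itself fails to be parabolic for exactly this reason.

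First I would introduce a gauge-adjusted flow. Fix $a_{0}\in\T$ and write $a = a_{0} + b$. In the spirit of the definite-connection proof, I choose
\[
\xi(a) = -\diff^{*}b, \qquad v(a) = -\chi_{a}\bigl(S_{a}^{*}\diff^{*}\diff b\bigr),
\]
where $S_{a}\colon \Omega^{0}(X,TX)\to \Omega^{1}(X,\R^{3})$ is the map $v\mapsto \iota_{v}\omega_{a}$ from (\ref{action of vectors on triples}), $S_{a}^{*}$ its $L^{2}$-adjoint with respect to $g_{\omega_{a}}$, and $\chi_{a}$ is the identity map $\Omega^{0}(X,TX) \to \Omega^{0}(X,TX)$ (there is no principal-bundle horizontal lift needed in this setting). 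The $\xi$-adjusted flow is then
\[
\frac{\partial a}{\partial t} = -\diff^{*}\bigl(B_{a}(\omega_{a})\bigr) - \diff \diff^{*} b - S_{a}\bigl(S_{a}^{*}\diff^{*}\diff b\bigr).
\]

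The central step is to show that this adjusted flow is strongly parabolic, i.e.\ that its linearisation at $b=0$ has negative-definite symbol in every non-zero codirection $\alpha\in \Lambda^{1}$. The principal part of the linearisation of $-\diff^{*}(B_{a}(\omega_{a}))$ is the direct analogue $-\diff^{*}P_{a}^{*}P_{a}\diff$ of the operator $\mathcal{L}_{A}$ computed in Proposition \ref{principal part}, with $F_{A}$ replaced by $\omega_{a}$. Adding the contributions of the two gauge-breaking terms yields symbol
\[
\Sigma(\alpha) = -\bigl(H_{\alpha}^{*}H_{\alpha} + S_{a}S_{a}^{*}w_{\alpha}^{*}w_{\alpha} + w_{\alpha}w_{\alpha}^{*}\bigr)
\]
exactly as in Theorem \ref{adjusted_flow_is_parabolic}. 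The short exact sequence of Lemma \ref{parabolic exact sequence} and the estimate $|S_{a}(u)|^{2}=3|u|^{2}$ (valid because the $\omega_{i}$ span an orthonormal frame of $\Lambda^{+}_{\omega_{a}}$ after rescaling by $\mu_{a}$, and interior contraction with a unit self-dual form is an isometry $TX\to T^{*}X$) carry over word-for-word to give negative-definiteness of $\Sigma(\alpha)$. This is the main obstacle, but it is essentially an algebraic repetition of the connection case since all the pertinent identities (Lemma \ref{4d lemma}, Corollary \ref{L circ S}) only used the 4-dimensional linear algebra of self-dual 2-forms and made no use of the bundle structure in $E$.

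Once parabolicity of the adjusted flow is established, standard parabolic theory (e.g.\ Proposition 8.1 of \cite{Taylor2011Partial-differe}) gives short-time existence and uniqueness for solutions $\hat{a}(t)$ of the adjusted flow with $\hat{a}(0)=a$. One then converts solutions of the adjusted flow back to solutions of (\ref{gradient flow for triples}) by flowing along the time-dependent ``gauge'' $(\xi(\hat{a}(t)), v(\hat{a}(t)))$: integrating the vector field $v(\hat{a}(t))$ gives a path of diffeomorphisms $\phi_{t}\in \Diff_{0}(X)$, and the triple of functions $\xi(\hat{a}(t))$ integrates to the exact shift (\ref{action of functions on triples}). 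The composed action produces a solution $a(t)$ of (\ref{gradient flow for triples}) with $a(0)=a$. Uniqueness is obtained symmetrically: given two solutions of (\ref{gradient flow for triples}) one solves the corresponding inverse gauge ODE/PDE to produce two solutions of the parabolic adjusted flow with the same initial data, which therefore coincide. As in the connection case, one checks that this inverse transformation is itself a parabolic flow for the gauge parameters, with the seemingly third-order term reducing to a first-order one via the closedness $\diff\omega_{i}=0$ (in place of the Bianchi identity). This closes the argument.
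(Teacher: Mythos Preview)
Your proposal is correct and follows precisely the approach the paper indicates: the paper itself gives no proof here, stating only that ``the proof is again identical to that given above for definite [connections] and so we state the outcome without giving the details,'' and you have spelled out that identical argument with the appropriate substitutions ($F_A\rightsquigarrow\omega_a$, Bianchi $\rightsquigarrow \diff\omega_i=0$, $\G\rightsquigarrow \Omega^0(X,\R^3)\times\Diff_0(X)$). One small correction to your justification: the identity $|S_a(v)|^2=3|v|^2$ does not require the $\omega_i$ to be orthonormal (they need not be, since $Q_a$ need not be the identity); rather it follows because each $\omega_{a,i}$ is self-dual (so $|\iota_v\omega_{a,i}|^2=|\omega_{a,i}|^2|v|^2$) together with $\sum_i|\omega_{a,i}|^2=\tr Q_a=3$, exactly as in the connection case.
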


Finally, we come to a result whose analogue for definite connections we do \emph{not} know.

\begin{proposition}\label{unique critical point}
The only possible critical points of $\F \colon \T \to \R$ correspond to hyperkähler triples. 
\end{proposition}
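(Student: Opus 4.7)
The plan is to derive the Euler--Lagrange equation for $\F$, observe that it says a certain triple of self-dual 2-forms is harmonic, and then combine Hodge theory with the normalisation $\int_X\omega_i\wedge\omega_j = \delta_{ij}$ to conclude $Q_a = \mathrm{Id}$.

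First, I would compute $\delta_a\F(b)$ for $b \in T_a\T$. A calculation formally identical to the one behind Proposition~\ref{flow for triples}---variations of $Q_a$ and of the volume form $\mu_a$, combined with $\diff\omega_j=0$ and Stokes' theorem to integrate by parts---yields
\[
\delta_a\F(b) = \int_X \sum_j \diff b_j \wedge \eta_j,
\qquad
\eta_i := \sum_j (B_a)_{ij}(\omega_j + \diff a_j),
\]
so $a$ is critical iff $\diff^*\eta_i = 0$ for each $i$. Each $\eta_i$ lies pointwise in the span of the $\omega_j + \diff a_j$, which equals $\Lambda^+_{g_{\omega_a}}$; hence $\eta_i$ is self-dual for $g_{\omega_a}$. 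For self-dual 2-forms one has $\diff^* = -{*}\diff$, so the critical equation is equivalent to $\diff\eta_i=0$: each $\eta_i$ is a harmonic self-dual 2-form for $g_{\omega_a}$.

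Second, the forms $\omega_j + \diff a_j$ are themselves closed (since $\diff\omega_j=0$) and self-dual, so also harmonic for $g_{\omega_a}$. Their classes $[\omega_j]$ span an orthonormal 3-plane $V \subset H^2_+(g_{\omega_a})$ (using the normalisation). Under the hypothesis $b_+(X)=3$ (which is automatic on the targets $T^4$ and $K3$ of Conjecture~\ref{skd_conjecture}), one has $V = H^2_+$, so $[\eta_i]=\sum_j c_{ij}[\omega_j]$ for constants $c_{ij}$. Uniqueness of the harmonic representative in each class gives $\eta_i = \sum_j c_{ij}(\omega_j+\diff a_j)$, and pointwise linear independence of the $\omega_j+\diff a_j$ (definite triple) forces $(B_a)_{ij} = c_{ij}$. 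Thus $B_a$ is a constant symmetric matrix; inverting the algebraic relation between $B_a$ and $Q_a$ (and using $\tr Q_a = 3$) then shows $Q_a$ is also constant. With $Q_a$ constant, the normalisation $\int_X \omega_i\wedge\omega_j = \delta_{ij}$ becomes $Q_{a,ij}\cdot\mathrm{Vol}(X,g_{\omega_a}) = \delta_{ij}$, and taking trace against $\tr Q_a = 3$ gives $\mathrm{Vol}(X,g_{\omega_a}) = 1$ and $Q_a = \mathrm{Id}$, so $\omega_a$ is a hyperk\"ahler triple.

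The hardest step is the passage $[\eta_i] \in V$. If $b_+(X)>3$ then $V\subsetneq H^2_+$, and the argument above leaves open the possibility of a nonzero component of $[\eta_i]$ orthogonal to $V$ in the intersection form. Removing this hypothesis seems to require exploiting the specific algebraic form of $B_a$---the fact that $\eta_i$ is not a generic harmonic self-dual form but one of a very constrained shape, with $B_a$ a polynomial in $Q_a$ vanishing exactly at $Q_a = \mathrm{Id}$. A plausible route is to unfold $\diff\eta_i=0$ as $\sum_j \diff(B_a)_{ij}\wedge(\omega_j+\diff a_j)=0$ and to deduce, via a Bochner-style pairing with $B_a$ itself, that $\diff(B_a)_{ij}=0$ directly, bypassing the need for $b_+ = 3$.
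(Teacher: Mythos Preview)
Your approach is exactly the paper's: show that at a critical point the triple $B_a(\omega_a)$ consists of harmonic self-dual 2-forms, deduce that $B_a$ and hence $Q_a$ is a constant matrix, and then use the normalisation $\int_X\omega_i\wedge\omega_j=\delta_{ij}$ to force $Q_a=\mathrm{Id}$.

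The gap you flag is genuine and is precisely the crux of the paper's argument. The paper does \emph{not} assume $b_+(X)=3$; it proves it. The observation is that for a definite triple the symplectic form $\omega_1+\diff a_1$ has $c_1=0$, because the almost-canonical bundle (for any compatible almost complex structure) is trivialised by $(\omega_2+\diff a_2)+i(\omega_3+\diff a_3)$. One then invokes a theorem of Bauer, which rests on Taubes' work on Seiberg--Witten invariants of symplectic 4-manifolds, to conclude that any closed symplectic 4-manifold with vanishing first Chern class has $b_+=3$. With that in hand, the three closed self-dual forms $\omega_j+\diff a_j$ span $\mathcal{H}^+_{g_{\omega_a}}$, and your argument goes through verbatim.

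So your ``hardest step'' is not handled by an internal Bochner-type argument in the paper; it is outsourced to deep Seiberg--Witten input. The paper in fact closes the proof with exactly the question you raise: whether there is a more direct argument, avoiding Seiberg--Witten theory, that would also transfer to the definite-connection setting. Your speculative route via $\sum_j \diff(B_a)_{ij}\wedge(\omega_j+\diff a_j)=0$ is a reasonable thing to try, but as of the paper it remains open.
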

\begin{proof}
Given a definite triple $ \omega_a$, it follows that the symplectic structure $\omega_1 + \diff a_1$ has vanishing first Chern class. Indeed, given a compatible almost complex structure, the almost canonical bundle is isomorphic to the span of $\omega_2 + i \omega_3$ and so is trivial.  A result of Bauer \cite{Bauer2008Almost-complex-} says that for such a symplectic 4-manifold, $b_+(X) = 3$. So the only harmonic self-dual 2-forms with respect to $g_{\omega_a}$ are constant linear combinations of the $\omega_j + \diff a_j$. 

Meanwhile, a critical point $a$ of $\F$ satisfies $\diff^* (B_a(\omega_a)) = 0$. The triple of 2-forms $B_a(\omega_a)$ is a linear combination of the components of $\omega_a$,  which are $g_{\omega_a}$ self-dual. It follows that $B_a(\omega_a)$ is a triple of self-dual and co-closed, hence harmonic 2-forms. So $a$ is a critical point of $\F$ if and only if $B_a(\omega_a)$ is a triple of harmonic self-dual 2 forms.

It follows that $B_a = Q_a - \frac{1}{6} \tr(Q_a^2)$ is a constant matrix which implies that $Q_a$ is constant. By the normalisation condition (\ref{integral condition}), $Q_a = \text{id}$ and hence $\omega_a$ is a hyperkähler triple.
\end{proof}

The proof of this result hinges on the result of Bauer and, ultimately, on the deep work of Taubes concerning the Seiberg--Witten invariants of symplectic 4-manifolds. It is interesting to ask if there is a more direct proof of Proposition \ref{unique critical point}, avoiding Seiberg--Witten theory, which could perhaps also apply in the case of definite connections.

\section{A moment-map interpretation}\label{moment map interpretation}

In this section we explain how the condition that a definite connection be perfect can be seen as the vanishing of a moment map. It will be interesting to see if the moment-map perspective can provide insight into this problem, much as it has done for other geometric PDEs (such as Hermitian--Einstein connections over K\"ahler manifolds, K\"ahler--Einstein metrics or more generally extremal K\"ahler metrics).

\subsection{A moment map for integral symplectic manifolds}

The starting point is a moment map arising in a more general situation, considered in \cite{Fine2011The-Hamiltonian}.  We describe this briefly here and refer to the original article for details. Let $L \to M$ be a Hermitian line bundle over a compact $2n$-dimensional manifold; moreover, suppose that $c_1(L)$ admits symplectic representatives. We denote by $\s$ the set of all unitary connections $A$ in $L$ whose curvature satisfies the condition that $\frac{i}{2\pi}F_A$ is a symplectic form on $M$. As we will explain $\s$ is a symplectic manifold and there is a moment-map for the action of the group $\G_L$ of bundle isometries.

The set $\s$ is open in the space of all connections (for, say, the $C^\infty$ topology). The tangent space $T_A \s$ is the space $\Omega^1(M, i\R)$ of imaginary 1-forms. In order to avoid factors of $i$ in all our formulae, we divide by $i$ at the outset and identify $T_A \s \cong \Omega^1(M, \R)$. Given $A \in \s$, we write $\omega_A$ for the associated symplectic form. Our conventions mean that $\omega_{A+a} = \omega_A + \diff a$ for $a\in \Omega^1(M,\R)$.

\begin{definition}
We define a 2-form $\Omega$ on $\s$ by
\[
\Omega_A(a,b) = \frac{1}{(n-1)!}\int_X a \wedge b \wedge \omega_A^{n-1},
\]
for $a,b \in \Omega^1(M ,\R)$.
\end{definition}

\begin{proposition}
The 2-form $\Omega$ is a symplectic form.
\end{proposition}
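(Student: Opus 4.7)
The plan is to verify three properties of $\Omega$: that it is a smooth, pointwise skew-symmetric 2-form, that it is closed, and that it is non-degenerate. Smoothness is immediate since $\omega_A$ depends affinely on $A$ (with $\omega_{A+a} = \omega_A + \diff a$), so the integrand depends polynomially on $A$; skew-symmetry follows from $a \wedge b = -b\wedge a$ for 1-forms.

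For closedness I would exploit the fact that $\s$ is open in an affine space modelled on $\Omega^1(M, \R)$. Regarding $a, b, c \in \Omega^1(M, \R)$ as constant vector fields on $\s$, their Lie brackets vanish and Cartan's formula reduces to
\[
\diff \Omega_A(a,b,c) = \del_a \Omega(b,c) - \del_b \Omega(a,c) + \del_c \Omega(a,b).
\]
Since $\omega_{A+ta} = \omega_A + t\,\diff a$, differentiation under the integral gives
\[
\del_a \Omega_A(b,c) = \frac{1}{(n-2)!} \int_M b \wedge c \wedge \diff a \wedge \omega_A^{n-2},
\]
and the analogous formulas for the other two terms. Rearranging the wedge factors (using that $\diff a, \diff b, \diff c$ are 2-forms, so they commute past 1-forms without a sign), the alternating sum becomes $\frac{1}{(n-2)!}\int_M \diff(a \wedge b \wedge c) \wedge \omega_A^{n-2}$. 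Since $\diff \omega_A = 0$ this equals $\frac{1}{(n-2)!}\int_M \diff(a \wedge b \wedge c \wedge \omega_A^{n-2})$, which vanishes by Stokes' theorem.

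For non-degeneracy, suppose $a \in T_A\s$ satisfies $\Omega_A(a,b) = 0$ for every $b \in \Omega^1(M, \R)$. Rewriting $a \wedge b = -b \wedge a$ gives $\int_M b \wedge (a \wedge \omega_A^{n-1}) = 0$ for all $b$. Since the wedge-and-integrate pairing between smooth 1-forms and smooth $(2n-1)$-forms is non-degenerate, this forces the $(2n-1)$-form $a \wedge \omega_A^{n-1}$ to vanish pointwise on $M$. But at each point, the linear map $\alpha \mapsto \alpha \wedge \omega_A^{n-1}$ from $T^*M$ to $\Lambda^{2n-1}T^*M$ is an isomorphism, this being the standard Lefschetz isomorphism on a symplectic vector space. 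Hence $a = 0$.

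No step presents a serious obstacle: the whole argument is essentially Stokes plus pointwise Lefschetz. The only places where care is needed are bookkeeping the signs when commuting 1-forms past the 2-forms $\diff a, \diff b, \diff c$ in the closedness computation, and justifying that the constant-vector-field Cartan formula is legitimate, which comes down to the affine structure on $\s$.
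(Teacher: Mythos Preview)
The paper does not actually supply a proof of this proposition; it states the result and refers the reader to \cite{Fine2011The-Hamiltonian} for details. Your argument is correct and is the standard one: skew-symmetry is immediate, closedness reduces via the constant-vector-field Cartan formula and Stokes' theorem to $\int_M \diff(a\wedge b\wedge c\wedge\omega_A^{n-2})=0$, and non-degeneracy is exactly the pointwise Lefschetz isomorphism $\alpha\mapsto\alpha\wedge\omega_A^{n-1}$ on a symplectic vector space together with the non-degeneracy of the $L^2$-type pairing $\Omega^1\times\Omega^{2n-1}\to\R$.
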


We remark that this picture is obviously motivated by that of Atiyah and Bott \cite{Atiyah1983The-Yang-Mills-}. They consider unitary connections in bundles of \emph{arbitrary} rank, but over a base with a \emph{fixed} symplectic form.

The group $\G_L$ of bundle isometries of $L$ (not necessarily covering the identity) acts by pull-back on $\s$, leaving $\Omega$ invariant. To describe the moment map for this action, we first note that given a connection $A$ in $L$ and $\eta \in \Lie (\G_L)$, one can define a function $A(\eta) \in C^\infty(M, \R)$. Thinking of $\eta$ as a vector field on $L$, the connection $A$ splits $\eta$ into a vertical and a horizontal part. On each fibre, the vertical part is multiplication by $iA(\eta)$.

Alternatively, we can think of a connection $A$ as an $S^1$-invariant 1-form on the principal circle bundle $P \to M$ corresponding to $L \to M$. Then $\eta$ is an $S^1$-invariant vector field on $P$ and the function $A(\eta)$ given by pairing the 1-form $A$ with the vector field $\eta$ is the function we seek, pulled back to ~$P$. (Again, normally one considers connections on principal circle bundles as \emph{imaginary} valued 1-forms, but we divide by $i$ throughout and use instead real 1-forms.) 

\begin{proposition}
The map $m \colon \s \to (\Lie(\G_L))^*$ defined by
\[
\langle m (A), \eta \rangle
=
\frac{1}{n!}\int_M A(\eta)\, \omega_A^n
\]
is a $\G_L$-equivariant moment map for the action of $\G_L$ on $\s$.
\end{proposition}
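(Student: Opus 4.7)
The plan is to verify the two defining properties of a moment map: the derivative identity $\diff\langle m,\eta\rangle_A(a) = \Omega_A(R_A(\eta),a)$ and $\G_L$-equivariance. The only real content is a careful bookkeeping of vertical versus horizontal components on the principal $S^1$-bundle $P\to M$ associated to $L$, together with a single integration by parts.

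First I would write down the infinitesimal action. Viewing $A$ as a real $S^1$-invariant $1$-form on $P$ and $\eta\in\Lie(\G_L)$ as an $S^1$-invariant vector field on $P$, the usual formula $R_A(\eta) = -L_\eta A = -\diff(A(\eta)) - \iota_\eta \diff A$ holds. Since $L$ has rank one, $\diff A$ descends to $\omega_A$ on $M$, so if $u_\eta$ denotes the vector field on $M$ obtained by projecting the horizontal component of $\eta$, then after identifying $T_A\s \cong \Omega^1(M,\R)$ one gets
\[
R_A(\eta) = -\diff(A(\eta)) - \iota_{u_\eta}\omega_A.
\]
This is exactly the line-bundle analogue of Lemma \ref{linear action of vector}.

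Next I would differentiate $\langle m(A),\eta\rangle = \frac{1}{n!}\int_M A(\eta)\,\omega_A^n$ in a direction $a\in\Omega^1(M,\R)$. The function $A(\eta)$ varies by $a(u_\eta) = \iota_{u_\eta}a$ (since the horizontal $a$ only pairs with the horizontal part of $\eta$), and $\omega_A^n$ varies by $n\,\omega_A^{n-1}\wedge\diff a$. Thus
\[
\diff\langle m,\eta\rangle_A(a)
= \frac{1}{n!}\int_M (\iota_{u_\eta}a)\,\omega_A^n
+ \frac{1}{(n-1)!}\int_M A(\eta)\,\omega_A^{n-1}\wedge\diff a.
\]
For the first term, contract $\iota_{u_\eta}$ into the vanishing $(2n+1)$-form $a\wedge\omega_A^n$ to obtain the algebraic identity $(\iota_{u_\eta}a)\,\omega_A^n = n\, a\wedge\iota_{u_\eta}\omega_A\wedge\omega_A^{n-1}$, which identifies it with $-\Omega_A(\iota_{u_\eta}\omega_A,a)$. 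For the second term, Stokes' theorem (using $\diff\omega_A = 0$) converts it to $-\Omega_A(\diff(A(\eta)),a)$. Adding the two pieces gives $\Omega_A(R_A(\eta),a)$, as required.

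Finally, equivariance. Any $g\in\G_L$ covers a diffeomorphism $\bar g$ of $M$, acts on $\s$ by pull-back, and $\omega_{g^*A} = \bar g^*\omega_A$. Under the identification of $\Lie(\G_L)$ with $S^1$-invariant vector fields on $P$, one has $\Ad_g(\eta) = g_*\eta$ and $(g^*A)(\eta) = A(g_*\eta)$, so a change of variables under $\bar g$ in the integral defining $m$ yields $\langle m(g^*A),\eta\rangle = \langle m(A),\Ad_g\eta\rangle$, which is the required coadjoint equivariance. The only point to keep straight is the orientation and $S^1$-invariance of all the objects involved; no analysis is required beyond a single integration by parts, so I expect the sign tracking in the first step above to be the mildly fiddly part rather than a genuine obstacle.
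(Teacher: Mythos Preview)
Your proof is correct. The paper itself does not prove this proposition; it merely states it and refers to \cite{Fine2011The-Hamiltonian} for details, so there is no in-paper argument to compare against. Your computation is the standard one: the infinitesimal action $R_A(\eta) = -\diff(A(\eta)) - \iota_{u_\eta}\omega_A$ is exactly the abelian version of the formula in \S\ref{linearised action}, the contraction identity $(\iota_{u_\eta}a)\,\omega_A^n = n\,a\wedge\iota_{u_\eta}\omega_A\wedge\omega_A^{n-1}$ and the single integration by parts give the derivative identity, and the change-of-variables argument gives equivariance. The only places to watch are the sign of $\Omega_A$ (it is skew, so the swap $a\wedge\iota_{u_\eta}\omega_A = -\iota_{u_\eta}\omega_A\wedge a$ is what produces your minus sign) and the left/right convention for the $\G_L$-action by pull-back when stating equivariance; neither affects the substance of the argument.
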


\subsection{Definite connections as an isotropic subspace}

We now return to our discussion of definite connections. Our goal in this section is to realise the space of definite connections as an isotropic subspace of an infinite dimensional symplectic manifold $\s$ of the kind just described. 

We first recall the symplectic interpretation of definite connections explained in \cite{Fine2009Symplectic-Cala}. We consider an $\SO(3)$-bundle $E \to X$ over a 4-manifold and the unit sphere bundle $\pi \colon Z \to X$. Write $V \to Z$ for the vertical tangent bundle, an $\SO(2)$-vector bundle. 

\begin{definition}\label{induced unitary connection}
Given a metric connection $A$ in $E$, we define a metric connection $A_V$ in the bundle $V$ as follows. Write $TZ = V \oplus H$, where $H$ is the horizontal complement provided by $A$. In the vertical directions, $A_V$ is the Levi-Civita connection of the fibres. To define $A_V$ horizontally, let $u \in T_pZ$ be an $A$-horizontal tangent vector and let $\gamma \colon (-\epsilon, \epsilon) \to Z$ be a horizontal path with $\gamma'(0) = u$. Parallel transport with respect to $A$ along the path $\pi \circ \gamma$ in $X$ trivialises $(\pi \circ \gamma)^*Z$ and hence $\gamma^* V$ over $(-\epsilon, \epsilon)$. Given a section $s$ of $V \to Z$, we define the $A_V$-covariant derivative of $s$ in the direction $u$ by $\frac{\diff}{\diff t} (\gamma^*s)$ in this trivialisation.

A more highbrow way to define $A_V$ is to think of $A$ as a connection on the principal frame bundle of $E \to X$ which then induces connections on all associated bundles. Let $\mathcal V \to X$ denote the vector bundle whose fibre at $x \in X$ is $C^\infty(TZ_x)$, the space of all vector fields on the $S^2$-fibre of $Z$ over $x$. $\mathcal V$ is associated to $E$ via the action of $\SO(3)$ on $C^\infty(TS^2)$; so $A$ induces a connection in $\mathcal V$, which we also write as $A$. Of course, a section of $\mathcal V \to X$ is the same thing as a section of $V \to Z$. Now let $s$ be a section of $V\to Z$ and $w$ a tangent vector field on $X$ with $A$-horizontal lift $u$ on $Z$. We define 
\[
\nabla^{A_V}_u(s)
=
\nabla^A_w(s)
\]
where on the right-hand-side we interpret $s$ as a section of $\mathcal V \to X$.
We note for later use that if $a \in \Omega^1(X, \so(E))$, then 
\[
\nabla^{A+a}_w(s) = \nabla^A_w(s) + [a(w),s]
\]
where on the right-hand-side we interpret $a(w) \in \so(E)$ as a section of $\mathcal V$ and the Lie bracket is that of vector fields on the sphere. 
\end{definition}

Choosing a fibrewise orientation in $Z \to X$ makes $V$ an oriented $\SO(2)$-bundle and so we can think of it as a Hermitian line bundle (multiplication by $i$ is a positive rotation by $\pi$) and $A_V$ as a unitary connection. The key point for us it that the curvature of $A_V$ is a symplectic form on $Z$ if and only if $A$ is a definite connection. (This is proved in~\cite{Fine2009Symplectic-Cala}.)

So we can view definite connections using the set-up described above. $V \to Z$ is the integral symplectic manifold and the map $A \mapsto A_V$ gives an embedding $\D \to \s$. Given a definite connection $A$, we write $\omega_A = \omega_{A_V}$ for the corresponding symplectic form on~$Z$. 

We now consider the group $\G$ of bundle maps of $E$ which preserves both the fibrewise orientation and metrics. $\G$ acts by fibrewise orientation-preserving isometries on the sphere bundle $Z$ and hence by orientation-preserving bundle isometries on the vertical tangent bundle $V$. This gives a natural inclusion $\G \to \G_V$ into the group of unitary bundle maps of $V$, thought of as a Hermitian line bundle. With this understood, the map $\D \to \s$ is $\G$-equivariant.

To describe this embedding in more detail, we first define a linear map 
\[
h \colon \Omega^0(X, \so(E)) \to \Omega^0(Z, \R)
\]
as follows. To a section $\rho$ of $\so(E)$ we associate the function $h(\rho) \colon Z \to \R$ whose restriction to the fibre over $x \in X$ is the mean-value zero Hamiltonian of the rotation $\rho(x)$. (We use the area form of the round metric on each fibre of $Z$ to define the Hamiltonians here.) Now, tensoring $h$ with pull-back $\pi^* \colon \Omega^p(X,\R) \to \Omega^p(Z, \R)$ gives maps on $\so(E)$-valued $p$-forms, $\Omega^p(X, \so(E)) \to \Omega^p(Z, \R)$ which we also denote by $h$. 

For the next result, we write $\A_E$ for the space of all metric connections in $E$, an affine space modelled on $\Omega^1(X, \so(E))$. Similarly, we write $\A_V$ for the space of all unitary connections in $V$,  an affine space modelled on $\Omega^1(M, \R)$. 

\begin{lemma}\label{embedding is h}
The embedding $\A_E \to \A_V$ given by Definition \ref{induced unitary connection} is affine, modelled on the map 
\[
-2 \pi h \colon \Omega^1(X, \so(E)) \to \Omega^1(Z, \R).
\]
\end{lemma}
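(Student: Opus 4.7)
The plan is to compute the difference $(A+a)_V - A_V$ directly as a $1$-form on $Z$ and identify it with $-2\pi h(a)$, after applying the convention that identifies skew-Hermitian $1$-form changes of connection with real $1$-forms, as in the earlier discussion of $\s$.

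I split $TZ = V \oplus H_A$ into vertical and $A$-horizontal parts. On vertical vectors both $A_V$ and $(A+a)_V$ restrict to the fibrewise Levi-Civita connection by Definition~\ref{induced unitary connection}, so the difference vanishes; $h(a)$ likewise vanishes on vertical vectors, being the product of a function on $Z$ with the pullback of a $1$-form from $X$. For an $A$-horizontal vector $u_A(w) \in T_{(x,y)}Z$, I apply the formula $\nabla^{A+a}_w s = \nabla^A_w s + [a(w),s]$ recorded at the end of Definition~\ref{induced unitary connection}, evaluated at $u_A(w)$ rather than at $u_{A+a}(w)$. Since $u_A(w) - u_{A+a}(w) = X_{a(w)}(y)$, where $X_\rho$ denotes the Killing field on the $S^2$ fibre generated by $\rho \in \so(E)_x$, there is a correction $\nabla^{LC}_{X_{a(w)}(y)} s$, and the net difference of covariant derivatives acting on a section $s$ at $y$ is $[X_{a(w)},s](y) + \nabla^{LC}_{X_{a(w)}(y)} s$.

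The heart of the argument is the pointwise identity
\[
[X_\rho, X_\sigma](y) + \nabla^{LC}_{X_\rho(y)} X_\sigma = H_\rho(y) \cdot J X_\sigma(y)
\]
for Killing fields on the round unit $2$-sphere, where $H_\rho(y) = \langle \rho, y\rangle$ is the mean-zero Hamiltonian and $J$ is the complex structure. I will verify this by combining $[X_\rho, X_\sigma] = X_{[\rho,\sigma]}$ with the elementary facts $J X_\rho(y) = \rho - H_\rho(y) y$ and $(\rho \times \sigma) \times y = H_\rho(y) \sigma - H_\sigma(y) \rho$; both sides reduce to $H_\rho(y) \sigma - H_\rho(y) H_\sigma(y) y$. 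Because the difference of two connections is tensorial in the section, and Killing fields span the tangent bundle at every point of $S^2$, the identity extends to arbitrary $s$: the skew-Hermitian difference $(A+a)_V - A_V$ acts on $s$ as multiplication by the purely imaginary scalar $i\, h(a)(u)$. The main obstacle is purely notational: the factor of $-2\pi$ in the statement arises from the conversion between the skew-Hermitian $1$-form and the real $1$-form representation fixed by the conventions $\omega_A = iF(A_V)/(2\pi)$ and $\omega_{A+a} = \omega_A + \diff a$ used throughout the author's treatment of $\s$.
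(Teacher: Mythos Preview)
Your approach is essentially the paper's: split into vertical and $A$-horizontal directions, invoke the formula from Definition~\ref{induced unitary connection} for the horizontal part, and reduce to a pointwise identity on the round $S^2$. The paper organises the spherical step a little differently: rather than computing $[X_\rho,X_\sigma] + \nabla^{LC}_{X_\rho} X_\sigma$ directly, it first uses torsion-freeness to rewrite the difference of covariant derivatives as $-\nabla^{LC}_s X_\rho$, and then proves as a separate lemma that $s \mapsto \nabla^{LC}_s \rho$ is rotation by $2\pi h(\rho)(q)$ via the embedding $S^2 \subset \R^3$.

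One caution about signs. With the conventions fixed in Definition~\ref{induced unitary connection} (so that the $\hat A$-horizontal lift is $\hat u = u + a(w)$, hence $u_A(w)-u_{A+a}(w) = -X_{a(w)}$), the difference along $u_A(w)$ is $[X_{a(w)},s] - \nabla^{LC}_{X_{a(w)}} s$, not the expression with a plus sign. You can detect this independently: the operator $s \mapsto [X_\rho,s] + \nabla^{LC}_{X_\rho} s$ is \emph{not} $C^\infty$-linear in $s$ (the two Leibniz terms add rather than cancel), so it cannot agree with the tensorial quantity $H_\rho\, Js$ for general $s$, and your extension-by-tensoriality step would fail as written. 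With the minus sign the expression collapses, via torsion-freeness, to $-\nabla^{LC}_s X_\rho$, which is exactly the paper's route; your cross-product computation then goes through (and in fact recovers the paper's lemma that $\nabla^{LC}_s X_\rho = H_\rho\, Js$). The remaining factor of $-2\pi$ is, as you say, purely a matter of the conventions relating $F(A_V)$ to $\omega_A$.
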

\begin{proof}
Let $A, \hat A \in \A_E$, with $\hat A = A + a$ for $a\in \Omega^1(X, \so(E))$. We must show that 
\[
\hat A_V - A_V = -2\pi h(\hat A - A).
\]
Let $s \in C^\infty(Z,V)$. By definition 
\[
\nabla^{\hat A_V}_{v}(s)
=
\nabla^{A_V}_{v}(s)
\]
whenever $v$ is vertical. So the 1-form $\hat A_V - A_V$ vanishes on vertical vectors, as does $h(\hat A - A)$.

Let $w \in C^\infty(X,TX)$ and write $u$ for its $A$-horizontal lift to a vector field on $Z$. Then its $\hat A$-horizontal lift to $Z$ is $\hat{u} = u + a(w)$, where here and throughout the proof we identify $a(w) \in \Omega^0(X, \so(E))$ with the corresponding vertical vector field on $Z$. So, given a section $s \in C^\infty(Z, V)$, we have
\[
\nabla_u^{\hat A_V} (s)
=
\nabla_{\hat u}^{\hat A_V}(s)
-\nabla_{a(w)}^{S^2}(s)
\]
where we have written $\nabla^{S^2}$ for the Levi-Civita connection on $S^2$.

Now, $\nabla_{\hat u}^{\hat A_V}$ and $\nabla_u^{A_V}$ are covariant derivatives corresponding to the same vector field $w$ downstairs on $X$. It follows that:
\[
\nabla^{\hat A_V}_{\hat u}(s)
=
\nabla^{A_V}_u(s) + [a(w),s].
\]
(See the discussion at the end of Definition \ref{induced unitary connection}.) But the Levi-Civita connection on $S^2$ is torsion free, so assembling the pieces gives
\[
\nabla_u^{\hat A_V} (s)
=
\nabla^{A_V}_u(s)
-\nabla_s^{S^2}(a(w)).
\] 

This reduces our calculation to the following question purely about the geometry of $S^2$: given $\rho \in \so(3)$, the map $T_qS^2 \to T_qS^2$ given by $s \mapsto \nabla^{S^2}_s \rho$ is a rotation by what angle? The following lemma ensures that this is $2\pi h(\rho)(q)$ where $h(\rho)$ is the Hamiltonian of $\rho$. Applying this with $\rho = a(w)$ gives the claimed formula for $\hat A_V - A_V$.
\end{proof}

\begin{lemma}
Let $\rho \in \so(3)$. The map $T_qS^2 \to T_qS^2$ given by $s \mapsto \nabla^{S^2}_s \rho$ is a rotation by $2\pi h(\rho)(q)$.
\end{lemma}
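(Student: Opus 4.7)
The proof is a direct Euclidean computation. Identify $\so(3)$ with $\R^3$ via the cross-product isomorphism, so that $\rho\in\so(3)$ corresponds to the unique $\vec v\in\R^3$ with $\rho(x)=\vec v\times x$; the induced vector field on $S^2$ is then $X_\rho(q)=\vec v\times q\in T_qS^2$. The task is to compute both sides of the claimed identity and match them.

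For the left-hand side, I would use that the Levi-Civita connection on $S^2$ is the orthogonal projection of the flat connection $D$ on $\R^3$. Differentiating $X_\rho(q)=\vec v\times q$ in the direction $s\in T_qS^2$ gives $D_s X_\rho = \vec v\times s$. Decomposing $\vec v = \langle\vec v,q\rangle\,q + \vec v_\perp$ with $\vec v_\perp \in T_qS^2$, one obtains
\[
\vec v \times s \;=\; \langle\vec v,q\rangle\,(q\times s) \;+\; \vec v_\perp\times s.
\]
Since $\vec v_\perp$ and $s$ both lie in $T_qS^2$, their cross product is parallel to $q$ and so is annihilated by the projection onto $T_qS^2$, while $q\times s$ already lies in $T_qS^2$. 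Therefore $\nabla^{S^2}_s\rho = \langle\vec v,q\rangle\,(q\times s)$; and since $s\mapsto q\times s$ is precisely the $+\pi/2$-rotation in the oriented Euclidean plane $T_qS^2$ (the fibrewise complex structure $J$), the endomorphism $s\mapsto\nabla^{S^2}_s\rho$ equals $\langle\vec v,q\rangle\cdot J$, i.e.\ infinitesimal rotation by the angle $\langle\vec v,q\rangle$.

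For the right-hand side I would use the round area form $\omega_q(u,w)=\langle q,u\times w\rangle$ together with the standard vector identity $(\vec v\times q)\times w = \langle\vec v,w\rangle\,q - \langle q,w\rangle\,\vec v$ to conclude $\iota_{X_\rho}\omega_q(w)=\langle\vec v,w\rangle$ for $w\in T_qS^2$. Hence $q\mapsto \langle\vec v,q\rangle$ is a Hamiltonian for $X_\rho$, and being a linear function it automatically has mean zero on $S^2$. Combined with the normalisation convention fixed earlier in this section (the same $\tfrac{1}{2\pi}$ factor which appears in $\omega_A=\tfrac{i}{2\pi}F(A_V)$ and in Lemma \ref{embedding is h}), this yields $2\pi h(\rho)(q) = \langle\vec v,q\rangle$, matching the formula for $\nabla^{S^2}_s\rho$ from the previous step.

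\textbf{Main obstacle.} There is no real analytic difficulty here: both sides reduce to elementary identities in $\R^3$. The only thing requiring care is bookkeeping of the factor $2\pi$, which must be traced consistently through the definition of $h$ via the round area form and the definition of $\omega_A$ from $F(A_V)$; once those conventions are pinned down as elsewhere in the paper, the two computations agree on the nose.
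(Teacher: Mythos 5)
Your argument follows essentially the same route as the paper's proof, namely: observe that the vector field on $S^2$ induced by $\rho$ is linear in the ambient $\R^3$, so its Euclidean derivative in direction $s$ is $\rho(s)$; project onto $T_qS^2$; and recognise the result as the infinitesimal rotation whose rate is (a multiple of) the mean-zero Hamiltonian of $\rho$. The paper's proof projects $\rho(s)$ directly and reads off the rotational component about the axis through $q$, whereas you first split the axis vector $\vec v$ into its components along and perpendicular to $q$ and show only the $q$-component survives the projection; this is a minor algebraic rearrangement of the same computation. You also go slightly further than the paper in explicitly verifying that $q\mapsto\langle\vec v,q\rangle$ is the mean-zero Hamiltonian of $X_\rho$ for the round area form, which the paper takes as standard.

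One small caveat: at the very end you assert $2\pi h(\rho)(q)=\langle\vec v,q\rangle$ by appeal to the $\tfrac{1}{2\pi}$ normalisation elsewhere in the section, while your own Hamiltonian computation (with the literal round area form) gives $h(\rho)(q)=\langle\vec v,q\rangle$ without the $\tfrac{1}{2\pi}$. As you yourself flag, the $2\pi$ bookkeeping is the only delicate point, and it depends on exactly which area form the paper's definition of $h$ is taken to use (the round one as stated, or the prequantum normalisation $\tfrac{1}{2\pi}\omega_{S^2}$ that would make Proposition \ref{decomposition of omega} literally consistent). The paper's own proof of this lemma is equally terse on the point, simply asserting that "the size of this component is exactly $2\pi h(\rho)(q)$", so you have reproduced the substance of the argument; a careful writeup would want to pin down the convention once and for all, which neither your proof nor the paper's quite does.
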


\begin{proof}
Think of $\rho$ as a vector field on $\R^3$; it is given simply by matrix multiplication: $x\mapsto \rho(x)$.  Since $\rho(x)$ is linear in $x$, $\nabla^{\R^3}_s\rho = \rho(s)$. The connection on $S^2$ is induced by projection, giving $\nabla^{S^2}_s \rho =  \rho(s) - (\rho(s), q)\, q$. In other words, $s \mapsto \nabla_s^{S^2}\rho$ is given by the component of $\rho$ which is rotation about the axis through $q$. The size of this component is exactly $2\pi h(\rho)(q)$.
\end{proof}

We will also need the following result, proved in \cite{Fine2009Symplectic-Cala} via a similar calculation as in the previous lemma.

\begin{proposition}\label{decomposition of omega}
Let $A \in \A_E$ and write the resulting vertical-horizontal decomposition of 2-forms on $Z$ as
\[
\Lambda^2(T^*Z) \cong \Lambda^2 V^* \oplus (V^* \otimes H^*) \oplus \Lambda^2H^*
\]
With respect to this decomposition, the curvature of the induced connection $A_V \in \A_V$ is $F_{A_V} = -2\pi i \omega_{A_V}$ where
\[
\omega_{A_V} = \left(\omega_{S^2}\oplus 0\oplus  h(F_A)\right).
\]
(Here $\omega_{S^2}$ denotes the fibrewise area form.)
\end{proposition}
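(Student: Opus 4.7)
The cleanest route to this identity is to realize $V$ as an orthogonal subbundle of $\pi^*E \to Z$, namely $V_q = q^\perp \subset E_{\pi(q)}$, with orthogonal complement spanned by the tautological unit section $\sigma \in \Gamma(Z,\pi^*E)$ given by $\sigma(q)=q$. Under this identification, the connection $A_V$ of Definition \ref{induced unitary connection} is exactly the subbundle connection obtained by projecting $\pi^*A$ orthogonally onto $V$: for horizontal $U$, $A$-parallel transport is an isometry and so sends $q^\perp$ to $(q')^\perp$; for vertical $W$, $\pi^*A$ restricted to a fibre is the trivial derivative on $E_x$, whose tangential projection to $T_qS^2 = q^\perp$ is the Levi-Civita connection on the round sphere. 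The second fundamental form $\mathrm{II}(X,v) = (\nabla^{\pi^*A}_X v)^{\perp_V} = -\langle v,\nabla^{\pi^*A}_X\sigma\rangle\sigma$ is therefore controlled by the derivatives of $\sigma$; a direct check gives $\nabla^{\pi^*A}_U\sigma = 0$ for horizontal $U$ (unit vectors parallel-transport to unit vectors) and $\nabla^{\pi^*A}_W\sigma = W$ for vertical $W$ (since $\sigma$ restricted to a fibre is the inclusion $S^2_x\hookrightarrow E_x$). Hence $\mathrm{II}(U,v)=0$ and $\mathrm{II}(W,v)=-\langle W,v\rangle\sigma$, the latter being the usual second fundamental form of the unit sphere.

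Now I apply the Gauss equation
\[
F_{A_V}(X,Y)v = \pi_V F_{\pi^*A}(X,Y)v + A_{\mathrm{II}(Y,v)}X - A_{\mathrm{II}(X,v)}Y,
\]
where $A_\eta X = -\pi_V\nabla^{\pi^*A}_X\eta$ is the shape operator, to each of the three types of 2-vector. \textbf{Horizontal-horizontal:} the second-fundamental-form corrections both vanish because $\mathrm{II}(U_i,v)=0$, leaving $F_{A_V}(U_1,U_2)v = \pi_V(F_A(u_1,u_2)v)$. By the Lemma immediately preceding the Proposition, this skew endomorphism of $V_q$ is the rotation of angle $2\pi h(F_A(u_1,u_2))(q)$, i.e.\ under $J\leftrightarrow i$ multiplication by $-2\pi i\,h(F_A(u_1,u_2))(q)$. \textbf{Mixed:} $\pi^*F_A(U,W)$ vanishes since $d\pi(W)=0$, and both correction terms vanish---one because $\mathrm{II}(U,v)=0$, the other because $A_\sigma U = -\pi_V\nabla^{\pi^*A}_U\sigma = 0$. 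Hence $F_{A_V}(U,W)=0$. \textbf{Vertical-vertical:} $\pi^*F_A$ again vanishes, while $A_\sigma W = -W$ feeds into the Gauss equation and recovers the standard Levi-Civita curvature of $S^2$, $R(W_1,W_2)v = \langle W_2,v\rangle W_1 - \langle W_1,v\rangle W_2$; translated into the Hermitian line-bundle picture this is multiplication by $-2\pi i\,\omega_{S^2}(W_1,W_2)$, with $\omega_{S^2}$ in the paper's normalization (so that $\int_{S^2}\omega_{S^2} = c_1(V|_{\text{fibre}}) = 2$).

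Assembling the three pieces gives $F_{A_V} = -2\pi i\,\omega_{A_V}$ with $\omega_{A_V} = \omega_{S^2}\oplus 0 \oplus h(F_A)$, as claimed. The main technical obstacle is bookkeeping: the identification of $\so(V_q)$-rotations with imaginary scalars under $J\leftrightarrow i$ introduces an overall sign that must be reconciled with the sign convention used to define the mean-zero Hamiltonian $h$, and the normalization of the round area form must be arranged so that the three $2\pi$ factors (one from the $c_1$ of $TS^2\to S^2$, one from the conversion factor appearing in the preceding Lemma, and one from the definition $\omega_A = \tfrac{i}{2\pi}F_{A_V}$) line up consistently. Once these conventions are pinned down the argument is algebraic, reducing the statement to the preceding Lemma together with a one-line Gauss-equation calculation on each of the three summands of $\Lambda^2 T^*Z$.
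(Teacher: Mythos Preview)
The paper does not actually prove this proposition in the text: it simply refers to \cite{Fine2009Symplectic-Cala} and remarks that the argument there is ``via a similar calculation as in the previous lemma'' (i.e.\ the direct covariant-derivative computation underlying Lemma~\ref{embedding is h}). So there is no in-paper proof to compare against line by line.

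Your argument is correct and in fact rather clean. Realising $V$ as the orthogonal complement of the tautological section $\sigma$ inside $\pi^*E$ and recognising $A_V$ as the projected subbundle connection is exactly right: horizontal curves in $Z$ are by definition $A$-parallel transports of unit vectors, so $\sigma$ is $\pi^*A$-parallel horizontally and the second fundamental form vanishes there, while on fibres one recovers the standard embedding $S^2\hookrightarrow\R^3$. The Gauss equation then dispatches the three summands in one stroke, with the horizontal--horizontal piece reducing precisely to the tangential projection of $F_A$ acting on $q^\perp$, which is what the lemma immediately preceding the proposition computes. Compared with the direct approach hinted at by the paper --- expanding $\nabla^{A_V}$ from its definition and chasing commutators as in the proof of Lemma~\ref{embedding is h} --- your method packages the computation more systematically and makes the vanishing of the mixed term transparent (it comes from $\mathrm{II}(U,\cdot)=0$ and $\nabla^{\pi^*A}_U\sigma=0$ simultaneously, rather than from a cancellation). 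The residual sign and normalisation bookkeeping you flag is genuine but routine, and your vertical--vertical check against the known curvature of $S^2$ is the right sanity test for it.
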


We are now in a position to prove:

\begin{proposition}
The image of the embedding $\D \to \s$ is isotropic.
\end{proposition}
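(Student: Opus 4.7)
The strategy is to use Lemma \ref{embedding is h} to translate the question into an integral on $Z$ involving horizontal forms, then use the vertical/horizontal splitting of $\omega_A$ from Proposition \ref{decomposition of omega} together with a parity argument on the $S^2$-fibres. By Lemma \ref{embedding is h}, the differential at $A\in\D$ of the embedding $\D\hookrightarrow\s$ sends $a\in T_A\D=\Omega^1(X,\so(E))$ to $-2\pi h(a)\in T_{A_V}\s=\Omega^1(Z,\R)$. Isotropicity of the image thus amounts to the identity
\[
\int_Z h(a)\wedge h(b)\wedge \omega_A^{\,2}\ =\ 0
\]
for all $a,b\in\Omega^1(X,\so(E))$.

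First I would use Proposition \ref{decomposition of omega} to expand $\omega_A=\omega_{S^2}+h(F_A)$. Since $\omega_{S^2}$ is a $2$-form on the $2$-dimensional fibre, $\omega_{S^2}^{\,2}=0$, so
\[
\omega_A^{\,2}=2\,\omega_{S^2}\wedge h(F_A)+h(F_A)^{\,2}.
\]
The forms $h(a)$, $h(b)$ and $h(F_A)$ all lie in $\pi^*\Lambda^{\bullet}T^*X$ tensored with fibrewise functions, so $h(a)\wedge h(b)\wedge h(F_A)^{\,2}$ is a section of $\pi^*\Lambda^6 T^*X$; but $\dim X = 4$, so this contribution vanishes identically. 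The integral reduces to
\[
2\int_Z \omega_{S^2}\wedge h(a)\wedge h(b)\wedge h(F_A).
\]

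Finally, I would evaluate this by fibre integration along $\pi\colon Z\to X$. Expanding $a,b,F_A$ in a local frame for $\so(E)$, the integrand becomes a sum of terms of the shape
\[
h(\xi_1)\,h(\xi_2)\,h(\xi_3)\,\omega_{S^2}\cdot\pi^*\eta,
\]
where $\xi_i$ are local sections of $\so(E)$ and $\eta$ is a $4$-form on $X$. It therefore suffices to show that $\int_{S^2}h(\xi_1)h(\xi_2)h(\xi_3)\,\omega_{S^2}=0$ for any three elements $\xi_i\in\so(3)$. The sub-lemma proved at the end of Lemma \ref{embedding is h} identifies the Hamiltonian $h(\xi)$ on each fibre with the linear function $q\mapsto\langle\xi,q\rangle$ on $S^2\subset\R^3$; hence $h(\xi_1)h(\xi_2)h(\xi_3)$ is antipodally odd, while the measure $|\omega_{S^2}|$ coming from the round metric is antipodally invariant (the antipodal map is an isometry). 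An antipodally odd function has vanishing integral, which finishes the proof. The entire argument is algebraic bookkeeping except for this one parity observation on $S^2$, which is the only place geometry genuinely enters and is the closest thing to an obstacle.
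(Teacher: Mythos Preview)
Your argument is correct. Both your proof and the paper's rest on the same antipodal parity, but you implement it differently. The paper works globally on $Z$: it observes that the fibrewise antipodal map $\gamma\colon Z\to Z$ satisfies $\gamma^*\circ h=-h$ and $\gamma^*\omega_{A_V}=-\omega_{A_V}$, so the integrand $h(a)\wedge h(b)\wedge\omega_{A_V}^{2}$ is $\gamma$-invariant while $\gamma$ is orientation-reversing, and the integral vanishes in one stroke with no expansion of $\omega_{A_V}^{2}$. Your route instead expands $\omega_A^{2}$, kills two of the three terms by horizontal/vertical degree counting, and applies the antipodal parity only to the surviving term via fibre integration on each $S^2$. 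The paper's version is cleaner and needs no bookkeeping; yours is more explicit about why each piece dies and isolates the one term where parity is genuinely doing the work. One small remark: your appeal to the sub-lemma after Lemma~\ref{embedding is h} for the identification of $h(\xi)$ with a linear function is slightly loose (that lemma gives the rotation angle, not the Hamiltonian directly, though the two are proportional); it would be tidier simply to note that the mean-zero Hamiltonian of a rotation of $S^2$ is the height function along the axis, which is manifestly antipodally odd.
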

\begin{proof}
Applying the antipodal map $S^2 \to S^2$ fibrewise on $Z$ gives an involution $\gamma \colon Z \to Z$. Since the antipodal map on $S^2$ is an orientation reversing isometry, mean-value zero Hamiltonians generating rotations change sign under pull-back. Hence $\gamma^* \circ h = - h$ on $\Omega^0(X,\so(E))$. It follows that $\gamma^* \circ h = -h$ on $\Omega^p(X, \so(E))$ since $\gamma^* \circ \pi^* = \pi^*$.

From Proposition \ref{decomposition of omega} we see that, given a definite connection $A$,  $\gamma^* \omega_{A_V} = - \omega_{A_V}$. In particular, $\gamma$ is orientation reversing. Now, let $a, b \in T_A \D = \Omega^1(X, \so(E))$. It follows from Lemma \ref{embedding is h} that under the embedding $\D \to \s$, they correspond to tangent vectors $h(a),h(b) \in T_{A_V}\s = \Omega^1(Z, \R)$. Evaluating the symplectic form on $\s$ on them gives
\[
\Omega(h(a),h(b))
=
\int_Z h(a) \wedge h(b) \wedge \omega_{A_V}^2
\]
Since $\gamma^*\circ h = -h$ and $\gamma^*\omega_{A_V} = -\omega_{A_V}$, the integrand is $\gamma$-invariant; on the other hand, $\gamma$ also reverses orientation. Hence the integral vanishes.
\end{proof}

\subsection{A moment map for definite connections}

In this section we explain how perfect connections---and hence anti-self-dual Einstein metrics of non-zero scalar curvature---are the zeros of a moment map. To begin, we reformulate the condition of being perfect in the current notation.

\begin{lemma}
A definite connection $A$ is perfect if and only if $h(F_A)^2 \in \Omega^4(Z, \R)$ is pulled back from $X$.
\end{lemma}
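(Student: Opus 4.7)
The plan is to compute $h(F_A)^2$ in a local orthonormal frame of $\so(E)$, recognise it as the product of a fibrewise quadratic function on $Z$ with $\pi^*\mu(A)$, and then translate ``pulled back from $X$'' into fibrewise constancy of that function.

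First I would pick an oriented orthonormal frame $e_1,e_2,e_3$ of $\so(E)\cong E$ over an open $U\subset X$ and write $F_A=\sum_i F_i\otimes e_i$. Since $h$ is defined by tensoring the fibrewise Hamiltonian map with pullback of forms, $h(F_A)=\sum_i h(e_i)\,\pi^*F_i$ on $\pi^{-1}(U)$. Over each fibre $Z_x\subset E_x$, the function $h(e_i)$ is the mean-zero Hamiltonian of the rotation $e_i$ acting on the round $S^2$; a short computation in spherical coordinates shows that this is (up to a normalisation constant that plays no role) the height function $q\mapsto \langle e_i,q\rangle = q_i$. Using $F_i\wedge F_j = Q(A)_{ij}\,\mu(A)$ and squaring then gives
\[
h(F_A)^2 \;=\; \sum_{i,j} h(e_i)\,h(e_j)\,\pi^*(F_i\wedge F_j) \;=\; \rho\,\pi^*\mu(A),
\]
where $\rho\colon Z\to\R$ is the globally defined function
\[
\rho(z) \;=\; \sum_{i,j} Q(A)_{ij}(\pi(z))\,q_i(z)\,q_j(z) \;=\; \langle Q(A)_{\pi(z)}\,z,\,z\rangle.
\]
Frame-independence is immediate: an $\SO(3)$-change of frame conjugates $Q(A)$ and rotates $q$ by the inverse rotation, leaving $\langle Qq,q\rangle$ unchanged, so $\rho$ is well defined on all of $Z$.

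Next I would observe that $\pi^*\mu(A)$ is a nowhere-vanishing horizontal 4-form on the 6-manifold $Z$, so for every vertical vector field $v$ one has $\iota_v(\rho\,\pi^*\mu(A))=0$ and $L_v(\rho\,\pi^*\mu(A)) = v(\rho)\,\pi^*\mu(A)$. Hence $h(F_A)^2$ is basic---equivalently, pulled back from $X$---if and only if $\rho$ is constant along every fibre of $\pi$. At $x\in X$ this says that $z\mapsto \langle Q(A)_x z,z\rangle$ is constant on the unit sphere of $E_x$, which forces $Q(A)_x$ to be a scalar multiple of $\Id$; combined with $\tr Q(A)=3$ this gives $Q(A)=\Id$, which is precisely the condition that $A$ be perfect. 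The only non-routine ingredient is the identification $h(e_i)(q)=\langle e_i,q\rangle$ on each fibre, which reduces to the elementary fact that, on the round $S^2$, the Hamiltonian generating rotation about a coordinate axis is the corresponding height function; everything else is bookkeeping.
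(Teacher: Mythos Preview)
Your proof is correct and follows essentially the same approach as the paper. The paper's proof is a one-liner---it simply records the formula $h(F_A)^2(p) = Q(A)(p,p)\,\mu(A)$ for $p$ a unit vector in a fibre of $E$, leaving the reader to infer the rest---whereas you spell out explicitly why the fibrewise Hamiltonians $h(e_i)$ are the height functions and why basicness of $\rho\,\pi^*\mu(A)$ is equivalent to $Q(A)=\Id$; but the underlying computation and logic are identical.
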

\begin{proof}
Let $p \in Z$, regarded as a unit length vector in a fibre of $E$. Then $h(F_A)^2(p) = Q(A)(p,p) \mu(A)$.
\end{proof}

To display this as the vanishing of a moment map we consider a certain subalgebra of the Lie algebra $\Lie(\G_V)$ of infinitesimal bundle isometries of $V$. The sub-algebra will be the Lie algebra of the group $\G_\pi$  of diffeomorphisms of $Z$ which leave each fibre invariant and, moreover, preserve the area forms on each fibre. (In particular, they cover the identity downstairs on $X$.) We will lift the action of $\Lie(\G_\pi)$ on $Z$ to $V \to Z$. This does not integrate up to an action of the whole group, but this is not important for the definition of a moment map.

To do this we first make a short digression to recall a standard fact. Let $L \to (M, \omega)$ be a compact integral symplectic manifold with $A$ a unitary connection in $L$ with curvature $\omega$; given a Hamiltonian vector field $v$ on $M$ with mean-value zero Hamiltonian $f$, denote by 
\[
\hat{v} = \tilde v + f(v) \frac{\del }{\del \theta}
\]
the vector field on $L$ where $\tilde v$ is the $A$-horizontal lift of $v$ and $\del/\del \theta$ generates the fibrewise $S^1$-action. The fact we need is the following.

\begin{lemma}\label{lifting Hamiltonians}
The map $v \mapsto \hat v$ is a Lie algebra homomorphism 
\[
\HVect(\omega_A) \to \Lie(\G_L)
\]
whose image is contained in the Lie algebra of the stabiliser of $A$. 
\end{lemma}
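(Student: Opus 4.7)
The plan is to work on the principal $S^1$-bundle $\pi\colon P\to M$ associated to $L$, where $A$ is a real 1-form satisfying $A(\del/\del\theta)=1$ and $\diff A=\pi^*\omega$. Both assertions of the lemma then reduce to direct computations using three standard facts: $A$ vanishes on horizontal lifts; $\del/\del\theta$ commutes with $\tilde v$ (by $S^1$-invariance of the horizontal lift) and annihilates pulled-back functions such as $f_v$; and the abelian curvature identity $A([\tilde v,\tilde w])=-\diff A(\tilde v,\tilde w)=-\omega(v,w)$, coming from the fact that $A(\tilde v)=A(\tilde w)=0$.

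For the homomorphism property I would expand $[\hat v,\hat w]$ into the four Lie brackets
\[
[\tilde v,\tilde w]+[\tilde v,f_w\,\del/\del\theta]+[f_v\,\del/\del\theta,\tilde w]+[f_v\,\del/\del\theta,f_w\,\del/\del\theta].
\]
The first bracket has horizontal part $\widetilde{[v,w]}$ and vertical part $-\omega(v,w)\,\del/\del\theta$ by the curvature identity; the two cross brackets simplify to $\bigl(v(f_w)-w(f_v)\bigr)\,\del/\del\theta$ (the $S^1$-invariance $[\tilde v,\del/\del\theta]=0$ killing the undesirable terms); the last vanishes because $\del/\del\theta$ kills pulled-back functions. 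With the convention $\iota_v\omega=-\diff f_v$ one has $v(f_w)=\omega(v,w)$ and $w(f_v)=-\omega(v,w)$, so the coefficient of $\del/\del\theta$ collapses to $\omega(v,w)=\{f_v,f_w\}$. Since $[v,w]=X_{\{f_v,f_w\}}$ and Poisson brackets are mean-value zero on compact symplectic manifolds (as $\{f,g\}\omega^n$ is exact), this coefficient is exactly the mean-zero Hamiltonian $f_{[v,w]}$, giving $[\hat v,\hat w]=\widehat{[v,w]}$.

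For the stabiliser assertion I would apply Cartan's formula,
\[
L_{\hat v}A=\iota_{\hat v}\diff A+\diff(\iota_{\hat v}A)=\iota_{\hat v}\pi^*\omega+\diff f_v,
\]
where the second equality uses $A(\tilde v)=0$ and $A(\del/\del\theta)=1$. Since $\pi^*\omega$ is basic, $\iota_{\del/\del\theta}\pi^*\omega=0$ and $\iota_{\tilde v}\pi^*\omega=\pi^*\iota_v\omega$, so the total is $\pi^*(\iota_v\omega+\diff f_v)=0$ by the Hamiltonian condition. Hence $\hat v$ lies in the Lie algebra of the stabiliser of $A$.

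The computations are classical prequantization identities and present no real analytic obstacle; the only difficulty is bookkeeping of sign and normalization conventions (whether $A(\del/\del\theta)$ is $1$ or $2\pi$, whether $\iota_v\omega=\pm\diff f_v$, etc.). Once a consistent choice is fixed, the curvature term $-\omega(v,w)\,\del/\del\theta$ combines with the cross brackets to produce precisely $f_{[v,w]}\,\del/\del\theta$, and the curvature piece of $L_{\hat v}A$ cancels with $\diff f_v$; the algebra is rigid and forced by these conventions.
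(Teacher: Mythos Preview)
Your argument is correct. The paper does not actually prove this lemma: it is introduced as ``a standard fact'' from prequantization and simply stated, with the discussion moving on immediately to the remark that the homomorphism need not integrate to one of groups. Your write-up supplies exactly the classical computation that justifies it, and the details (the curvature identity $A([\tilde v,\tilde w])=-\omega(v,w)$, the vanishing of the cross terms via $S^1$-invariance, and Cartan's formula for $L_{\hat v}A$) are all handled correctly. Your caveat about sign and normalisation conventions is apt: the paper works with real-valued connection $1$-forms and the convention $\omega_A=\frac{i}{2\pi}F_A$, so one should check that the particular choice $\iota_v\omega=-\diff f_v$ you use matches the implicit convention whereby $f$ is the mean-zero Hamiltonian of $v$; but as you note, any consistent choice makes the algebra go through, and the mean-zero condition on $f_{[v,w]}$ follows from exactness of $\{f,g\}\,\omega^n$ as you say.
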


Note that it is not true in general that this homomorphism integrates up to a homomorphism of groups $\Ham(\omega_A) \to \G_L$. For example, considering $\O(1) \to \C\P^1$, the subgroup $\SO(3) \subset \Ham(\C\P^1)$ gives rise to a subalgebra of $\Lie(\G_L)$ which integrates up to a copy of $\SU(2)$ rather than $\SO(3)$. 

With this fact in hand we can now explain how to lift the action of $\Lie(\G_\pi)$ on $Z$ to $V \to Z$. Note that, since $\pi_1(S^2) = 1$, area preserving diffeomorphisms of $S^2$ are Hamiltonian. Next we lift elements of $\HVect(S^2)$ to $TS^2$, via Lemma \ref{lifting Hamiltonians} using the Levi-Civita connection on $TS^2$. Doing this on every fibre of $V \to Z \to X$, we see that the action of $\Lie(G_\pi)$ lifts to an action on $V$ by infinitesimal bundle isometries.

The resulting action of $\Lie(\G_\pi)$ on $\s$ has a moment map given by projecting the original moment map $m \colon \s \to (\Lie \G_V)^*$ to $(\Lie \G_\pi)^*$. We denote this by 
\[
m_\pi \colon \s \to (\Lie \G_\pi)^*.
\]  
We now show that $m_\pi$ is the moment map we are looking for.

\begin{theorem}
Let $A\in \D$ be a definite connection. Then $m_\pi(A_V) = 0$ if and only if $A$ is perfect.
\end{theorem}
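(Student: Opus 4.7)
The plan is to unpack the defining formula
\[
\langle m_\pi(A_V),\eta\rangle \;=\; \frac{1}{3!}\int_Z A_V(\hat\eta)\,\omega_{A_V}^3
\]
into a pointwise statement on $Z$ and then to identify the resulting condition as perfection of $A$.

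First I would compute $A_V(\hat\eta)$ for $\eta \in \Lie(\G_\pi)$. Such an $\eta$ is determined by a smooth function $f\colon Z \to \R$ whose restriction to each fibre $Z_x \cong S^2$ has mean value zero with respect to the round area form, with $f|_{Z_x}$ the Hamiltonian of $\eta|_{Z_x}$. Since $\hat\eta$ is built by applying Lemma \ref{lifting Hamiltonians} fibrewise to the Levi-Civita connection on $TS^2 = V|_{Z_x}$, on each fibre $\hat\eta = \tilde\eta + f\,\del/\del\theta$, so $A_V(\hat\eta) = f$ as a function on $Z$; this step uses crucially that, by Definition \ref{induced unitary connection}, the vertical part of $A_V$ really is the fibrewise Levi-Civita connection.

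Next I would expand $\omega_{A_V}^3$ using Proposition \ref{decomposition of omega}, which writes $\omega_{A_V} = \omega_{S^2} + h(F_A)$ as a sum of a vertical and a horizontal piece. Because the vertical distribution has rank $2$ and the horizontal distribution has rank $4$, both $\omega_{S^2}^2$ and $h(F_A)^3$ vanish identically, leaving $\omega_{A_V}^3 = 3\,\omega_{S^2}\wedge h(F_A)^2$. Combining this with the identity $h(F_A)^2(p) = Q(A)_x(p,p)\,\mu(A)$ (the Lemma immediately preceding the theorem), and using Fubini along $Z \to X$, we obtain
\[
\langle m_\pi(A_V),\eta\rangle \;=\; \frac{1}{2}\int_X \mu(A) \int_{Z_x} f(p)\,Q(A)_x(p,p)\,\omega_{S^2}.
\]

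The conclusion then reduces to pointwise linear algebra on each fibre. Varying $\eta$, equivalently varying $f$ over all smooth functions with fibrewise mean zero, the vanishing of the integral forces $p \mapsto Q(A)_x(p,p)$ to be constant on every $Z_x$. A quadratic form on $E_x \cong \R^3$ that is constant on the unit sphere is necessarily a multiple of the identity, and the normalisation $\tr Q(A) = 3$ (from the proof of Proposition \ref{topological lower bound}) forces that multiple to be $1$. Hence $m_\pi(A_V) = 0$ iff $Q(A) = \Id$, which is precisely perfection of $A$. The only step requiring any real care is the identification $A_V(\hat\eta) = f$; everything else is bookkeeping with the decomposition of $\omega_{A_V}$ and the already-established pointwise formula for $h(F_A)^2$.
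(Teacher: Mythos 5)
Your proposal is correct and follows essentially the same route as the paper: identify $A_V(\hat\eta)$ with a fibrewise mean-zero function, expand $\omega_{A_V}^3 = 3\,\omega_{S^2}\wedge h(F_A)^2$, and reduce $m_\pi(A_V)=0$ to a fibrewise condition on each $Z_x$. The only (cosmetic) divergence is at the very end: the paper writes $h(F_A)^2=(1+\chi)\pi^*\alpha$ and forces $\chi=0$ via a $\chi^2$ positivity argument, then quotes the preceding lemma ($A$ perfect iff $h(F_A)^2$ pulled back), whereas you unpack that lemma directly, observing that a quadratic form on $\R^3$ constant on the unit sphere must be a multiple of the inner product and fixing the multiple by $\tr Q(A)=3$.
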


\begin{proof}
The condition $m_\pi(A_V) = 0$ says that $\int A_V(\eta) \omega_{A_V}^3 = 0$ for all $\eta \in \Lie \G_\pi$. We think of $\eta \in \Lie \G_\pi$ as a vector field on the principal $S^1$-bundle $P\to Z$. By definition, $\eta$ is tangent to the fibres of $P \to Z \to X$. Moreover, using the Levi-Civita connection on each fibre we can split $\eta$ into two components:
$$
\eta = A(\eta)\frac{\del}{\del \theta} + v.
$$
It follows from the definition of $\Lie(\G_\pi) \to \Lie(\G_V)$ that $A(\eta) = h(v)$ and, in particular, has fibrewise mean-value zero. Conversely, every function $Z \to \R$ which has fibrewise mean-value zero arises as $A(\eta)$ for some $\eta$. So, $A \in \D$ has $m_\pi(A_V) = 0$ if and only if $\int_Z f \omega_{A_V}^3=0$ for every function $f \colon Z\to \R$ with fibrewise mean-value zero. 

Note that the volume form is
$$
\omega_{A_V}^3 = \omega_{V} \wedge h(F_A)^2.
$$
First suppose that $A$ is perfect so that $h(F_A)^2 = \pi^*\alpha$ is pulled back from $X$. Then, for any function $f \colon Z \to \R$, 
$$
\int_Z f \omega_{A_V}^3 
= 
\int_X \left( \pi_*( f \omega_{V}) \right)\alpha .
$$
In particular, for any function with fibrewise mean-value zero, the function $\pi_*(f \omega_{V})$ vanishes and hence $m_\pi(A_V)=0$. 

Conversely, suppose $m_\pi(A_V) = 0$. We will show that $h(F_A)^2$ is pulled back from the base. Let $\alpha = \pi_*(\omega_{A_V}^3)$. Since $\pi_*(\omega_{A_V}^3)$ is a volume form on $X$, $\pi^*\alpha$ spans $\Lambda^4H^*$ at every point of $Z$. Hence we can write
$$
h(F_A)^2 = (1 + \chi) \pi^*\alpha
$$
for some function $\chi$ which has fibrewise mean-value zero. Since $m_\pi(A_V) = 0$, we have that
$$
0 
=
\int_Z \chi\, \omega_{A_V}^3 
=
\int_Z (1+\chi)\chi \,\omega_{V}\wedge \pi^*\alpha
= 
\int_Z \chi^2\, \omega_{V}\wedge \pi^*\alpha 
$$
But $\omega_{V}\wedge \pi^*\alpha$ is a volume form on $Z$, so this forces $\chi = 0$ and $h(F_A)^2$ is pulled back from the base as required.
\end{proof}
{\small
\bibliographystyle{plain}
\bibliography{asdE_gauge_bibliography}
}
\end{document}